\newtheorem{defin}{Definition}
\newtheorem{lemma}{Lemma}
\newtheorem{prop}{Proposition}
\newtheorem{theo}{Theorem}
\newenvironment{proof}{\medskip\par\noindent{\bf Proof}}{\hfill $\Box$
\medskip\par}
\newcommand{\C}{\mathbb{C}}
\newcommand{\N}{\mathbb{N}}
\newcommand{\Q}{\mathbb{Q}}
\newcommand{\R}{\mathbb{R}}
\newcommand{\Z}{\mathbb{Z}}
\newcommand{\bT}{\boldsymbol{T}}
\newcommand{\bt}{\boldsymbol{t}}
\begin{document}
\title{On a $q-$analog of a singularly perturbed problem of irregular type with two complex time variables}
\author{{\bf A. Lastra\footnote{The author is partially supported by the project MTM2016-77642-C2-1-P of Ministerio de Econom\'ia y Competitividad, Spain}, S. Malek\footnote{The author is partially supported by the project MTM2016-77642-C2-1-P of Ministerio de Econom\'ia y Competitividad, Spain.}}\\
University of Alcal\'{a}, Departamento de F\'{i}sica y Matem\'{a}ticas,\\
Ap. de Correos 20, E-28871 Alcal\'{a} de Henares (Madrid), Spain,\\
University of Lille 1, Laboratoire Paul Painlev\'e,\\
59655 Villeneuve d'Ascq cedex, France,\\
{\tt alberto.lastra@uah.es}\\
{\tt Stephane.Malek@math.univ-lille1.fr }}
\date{}
\maketitle
\thispagestyle{empty}
{ \small \begin{center}
{\bf Abstract}
\end{center}

Analytic solutions and their formal asymptotic expansions for a family of the singularly perturbed $q-$difference-differential equations in the complex domain are constructed. They stand for a $q-$analog of the singularly perturbed partial differential equations considered in~\cite{family3}. In the present work, we construct outer and inner analytic solutions of the main equation, each of them showing asymptotic expansions of essentially different nature with respect to the perturbation parameter. The appearance of the $-1$-branch of Lambert $W$ function will be crucial in this respect.

\medskip

\noindent Key words: asymptotic expansion, Borel-Laplace transform, Fourier transform, initial value problem, formal power series, $q-$difference equation,
boundary layer, singular perturbation. 2010 MSC: 35C10, 35C20,35R10, 35C15}
\bigskip \bigskip

\section{Introduction}

{\color{blue}{



}}

The intent of this study is to provide analytic solutions
and their parametric asymptotic expansions to a family of singularly perturbed $q-$difference-differential equations in the complex domain, in which two time variables act, for some fixed $q>1$. 

More precisely, we consider equations of the form

\begin{equation}\label{epral}
Q(\partial_z)u(\bt,z,\epsilon)=P(\bt,z,\epsilon,\partial_{t_2},\sigma_{q;t_1},\sigma_{q;t_2})u(\bt,z,\epsilon) +f(\bt,z,\epsilon),
\end{equation}
under null initial data $u(t_1,0,z,\epsilon)\equiv u(0,t_2,z,\epsilon)\equiv 0$. Here $Q(X)\in\C[X]$, and $P$ stands for a polynomial with complex coefficients with respect to $\bt:=(t_1,t_2),\partial_{t_2}$, a polynomial with rational powers with respect to $\sigma_{q;t_1}$ and $\sigma_{q;t_2}$; holomorphic with respect to $z$ on a horizontal strip $H_\beta:=\{z\in\C:|\hbox{Im}(z)|<\beta\}$ for some $\beta>0$, and holomorphic with respect to the perturbation parameter $\epsilon$ on a small disc centered at the origin, say $D(0,\epsilon_0)$ for some $\epsilon_0>0$. Throughout the present work, $\sigma_{q;t}$ stands for the dilation operator on $t$ variable for some fixed $q>1$, i.e.
$$\sigma_{q;t}(f(t)):=f(qt).$$
We adopt the following notation $\sigma_{q;t}^\delta(f(t))=f(q^\delta t)$, for any $\delta\in\Q$.

The forcing term $f(\bt,z,\epsilon)$ is constructed under certain growth conditions, and turns out to be a holomorphic function in $\C^{\star}\times \C^{\star}\times H_{\beta'}\times (D(0,\epsilon_0)\setminus\{0\})$, for $0<\beta'<\beta$.

The precise assumptions on the elements involved in the main equation under study are detailed in Section~\ref{sec2}.

The problem under study (\ref{epral}) turns out to be a $q-$analog of the main problem studied in~\cite{family3}, 
\begin{equation}\label{epral2}
Q(\partial_z)u(\bt,z,\epsilon)=P(t_1^{k_1+1}\partial_{t_1},t_2^{k_2+1}\partial_{t_2},\partial_z,z,\epsilon)u(\bt,z,\epsilon)+f(\bt,z,\epsilon),
\end{equation}
under null initial data $u(t_1,0,z,\epsilon)\equiv u(0,t_2,z,\epsilon)\equiv 0$, and where $Q(X)\in\C[X]$, and $P$ is a polynomial with respect to its first three variables, with holomorphic coefficients on $H_\beta\times D(0,\epsilon_0)$, and the forcing term is holomorphic on $\C^\star\times \C\times H_{\beta'}\times (D(0,\epsilon_0)\setminus\{0\})$. The analytic solutions and their asymptotic expansions of those singularly perturbed partial differential equations are obtained in~\cite{family3}. 
More precisely, the so-called inner solutions are holomorphic solutions of (\ref{epral2}), holomorphic on domains in time which depend on the perturbation parameter and approach infinity, admits Gevrey
asymptotic expansion of certain positive order, with respect to $\epsilon$, whereas the so-called outer solutions are holomorphic solutions of (\ref{epral2}), holomorphic on a product of finite sectors with vertex at the origin with respect to the time variables, admit Gevrey asymptotic expansion of a different positive order, with respect to $\epsilon$. 

The previous phenomena of existence of different asymptotic expansions regarding different domains of the actual solution of the main problem is enhanced in the present work, in the sense that different nature on the asymptotic expansions are observed: Gevrey and $q-$Gevrey asymptotic expansions.

In this work, we search for the analytic solutions of the main problem as the inverse Fourier transform and $q-$Laplace transform of a positive order in the form
\begin{equation}\label{e111}
u(\bt,z,\epsilon)=\frac{1}{(2\pi)^{1/2}\pi_{q^{1/k_1}}}\int_{-\infty}^{\infty}\int_{L_\gamma}\omega(u,m,\epsilon)\frac{1}{\Theta_{q^{1/k_1}}\left(\frac{u}{\epsilon^{\lambda_1}t_1}\right)}e^{-\left(\frac{1}{\epsilon^{\lambda_2}t_2}\right)^{k_2}u}e^{izm}dm\frac{du}{u},
\end{equation}
 (see (\ref{e225}) and (\ref{e242})), for some appropriate $\gamma\in\R$ and $k_1,k_2,\lambda_1,\lambda_2>0$ (see Section~\ref{sec2} for their definition). The function $\omega(\tau,m,\epsilon)$ is obtained from a fixed point argument (see Proposition~\ref{prop5}), and belongs to $q\hbox{Exp}^d_{(k'_1,\beta,\mu,\alpha)}$, a Banach space of holomorphic functions with $q-$exponential growth and exponential decay with respect to $\tau$ and $m$, respectively (see Definition~\ref{defi11}).

The form (\ref{e111}) of the analytic solutions is motivated on the shape of those of the main problem in~\cite{family3}, mixing both time variables in a common Laplace operator. 


A first family of analytic solutions of (\ref{epral}) is constructed on domains of the form $\mathcal{T}_{1}\times\mathcal{T}_{2,\epsilon}\times H_{\beta'}\times \mathcal{E}_{h_1}^{\infty}$, and a second on domains of the form $\mathcal{T}_1\times(\mathcal{T}_2\cap D(0,\rho_2))\times H_{\beta'}\times \mathcal{E}^0_{h_2}$, where $\mathcal{T}_1$ is a finite sector, $\mathcal{T}_{2}$ is an unbounded sector and where $\mathcal{T}_{2,\epsilon}\subseteq\mathcal{T}_2$ is a bounded sector which depends on $\epsilon\in\mathcal{E}_{h_1}^{\infty}$, and tends to infinity with $\epsilon$ approaching the origin. The sets $\underline{\mathcal{E}}^0=(\mathcal{E}_{h_2}^0)_{0\le h_2\le \iota_2-1}$ and $\overline{\mathcal{E}}^\infty=(\mathcal{E}_{h_1}^\infty)_{0\le h_1\le \iota_1-1}$ represent good coverings (see Definition~\ref{goodcovering}).

Different path deformations performed on the analytic solutions give rise to Theorem~\ref{teo2a} and Theorem~\ref{teo2b}, where upper bounds on the difference of two consecutive solutions are attained (consecutive solutions in the sense that they are related to consecutive sectors in a good covering). Such bounds are related to null Gevrey and $q-$Gevrey asymptotic expansions of some positive order. As a matter of fact, the previous differences allow to apply a novel $((q,k);s)-$version of the cohomological criteria known as Ramis-Sibuya theorem. Such result is related to functions admitting $q-$Gevrey asymptotic expansions of order $k$ and a Gevrey sub-level of order $s$, see Theorem~\ref{teo553}. We also apply a $q-$analog of Ramis-Sibuya Theorem, see Theorem~\ref{teoqrs}.

The main two results of the present work are Theorem~\ref{teopral1} and Theorem~\ref{teopral2} relating the analytic solutions of (\ref{epral}) to their formal power series expansions obtaining asymptotic results of different nature. Such solutions are known as inner and outer solutions (see Definition~\ref{defi7a} and Definition~\ref{defi7b}, resp.). Such asymptotic solutions have also been observed in the previous study~\cite{family3}, in the framework of singularly perturbed PDEs. However, the different nature of the asymptotic expansions regarding the outer and inner solutions is a novel phenomena which has firstly been observed in the present study.

The inner and outer expansions appear in the study of matched asymptotic expansions (see~\cite{om,skinner}, among others for the classical theory). In the work~\cite{frsch} by A. Fruchard and R. Sch\"afke, the method of matching is developed, studying the nature of such asymptotic expansions, under Gevrey settings.

We fix a good covering $(\mathcal{E}^{\infty}_{h_1})_{0\le h_1\le\iota_1-1}$, and consider the holomorphic solutions of the problem (\ref{epral}), $u_{h_1}(\bt,z,\epsilon)$ defined on $\mathcal{E}^{\infty}_{h_1}$ w.r.t. $\epsilon$ for all $0\le h_1\le \iota_1-1$. In Theorem~\ref{teopral1}, we prove that for some $\mu_{2}>\lambda_{2}$, $\theta_{h_1}$ and some adequate domain
$\chi_{2}^{\infty} \subset \mathcal{T}_{2}$, the function
\begin{equation}
\epsilon \mapsto u_{h_1}(t_{1},\frac{x_{2}}{\epsilon^{\mu_{2}}} e^{\sqrt{-1}\theta_{h_1}},z,\epsilon) \label{e117}
\end{equation}
with values in the Banach space of holomorphic and bounded functions on $\mathcal{T}_{1} \times \chi_{2}^{\infty} \times H_{\beta'}$, say
$\mathbb{F}_{1}$, admits a formal power series $\hat{u}^{\infty}(\epsilon) \in \mathbb{F}_{1}[[\epsilon]]$ as $((q,K);S)-$Gevrey asymptotic
expansion on $\mathcal{E}_{h_1}^{\infty}$ for some $K,S>0$, for every $0 \leq h_{1} \leq \iota_{1}-1$. 

The proof of this result leans on the application of accurate estimates related to the  $-1$-branch of Lambert $W$ function (see Lemma~\ref{lema713}).

Concerning the \textit{outer solutions} of the main problem under study, we consider a good covering $(\mathcal{E}^{0}_{h_2})_{0\le h_2\le\iota_2-1}$, and the holomorphic solutions of (\ref{epral}), $u_{h_2}(\bt,z,\epsilon)$ defined on $\mathcal{E}^{0}_{h_2}$ w.r.t. $\epsilon$ for all $0\le h_2\le \iota_2-1$. In Theorem~\ref{teopral2}, we prove that 
\begin{equation}\label{e117b}
\epsilon\mapsto u_{h_2}(\bt,z,\epsilon),\quad 0\le h_2\le \iota_2-1,
\end{equation}
is an outer solution of (\ref{epral}) with values in the Banach space $\mathbb{F}_2$ of holomorphic and bounded functions on $\mathcal{T}_1\times(\mathcal{T}_2\cap D(0,\rho_2))\times H_{\beta'}$. Moreover, there exists a formal power series $\hat{u}^0(\epsilon)\in\mathbb{F}_2[[\epsilon]]$ which is the common $q-$Gevrey asymptotic expansion of some positive order of each solution (\ref{e117b}) on $\mathcal{E}_{h_2}^0$, for $0\le h_2\le\iota_2-1$.

In recent years, an increasing interest on the study of the asymptotic behavior of solutions to $q-$difference-differential equations in the complex domain has been observed. New theories giving rise to $q-$analogs of the classical theory of Borel-Laplace summability have been discussed and studied, as in the case of the work~\cite{taq}, by H. Tahara, where the author also provides information about $q-$analogs of Borel and Laplace transforms and related properties on convolution or Watson-type results. The use of procedures based on the Newton polygon is also exploited in recent studies, such as the work~\cite{taya} by H. Tahara and H. Yamazawa. Also, it is worth mentioning the study of $q-$analogs of Briot-Bouquet type partial differential equations by H. Yamazawa, in~\cite{ya}. Integral transforms involving special functions have also been considered in the study of $q-$difference-differential equations in~\cite{ho,pr}. Other references in this context by the authors and collaborators are listed in the references.

Different kinds of Advanced/delayed partial differential equations are the cornerstone of mathematical models which have been recently studied. Examples of such studies have been applied to tsunamis and rogue waves which can be found in~\cite{pr3}. We also refer to other studies such as~\cite{pr1,pr2}, and the references therein.

The outline of the work is as follows. In Section~\ref{secdos}, we recall some known facts about formal $q-$Borel transform, analytic $q-$Laplace transform and inverse Fourier transform together with some properties which are applied to transform the main equation under study into auxiliary problems. Afterwards, we provide the definition and related properties of some Banach spaces involved in the construction of the solution. In Section~\ref{sec2} we state the main problem under study (\ref{e1}) and two auxiliary equations. The elements involved in them in addition to the domains of existence and upper bounds of the solutions of such equations are detailed. Section~\ref{sec4} is devoted to the existence and description of the domain of existence for the auxiliary equation (\ref{e2}), and associated estimates. In the following section, Section~\ref{sec5}, we provide analytic solutions of (\ref{e1}) (see Theorem~\ref{teo1}) and estimates on the difference of two of them (see Theorem~\ref{teo2a} and Theorem~\ref{teo2b}). After a brief summary on $q-$asymptotic expansions in the first part of Section~\ref{sec55}, we provide formal power series expansions in the perturbation parameter of the analytic solutions and relate them asymptotically in adequate domains. These results are attained in Theorem~\ref{teopral1} and Theorem~\ref{teopral2}. The work concludes with two technical sections, Section~\ref{secanexo2} and Section~\ref{secanexo}, left to the end of the work in order not to interfere with our reasonings.

\section{Review on certain integral operators. Study of Banach spaces involved in the problem}\label{secdos}

We briefly describe the foundations of the analytic and formal operators that will allow the transformation of the main problem under study in terms of auxiliary equations. The solutions of such equations belong to certain Banach spaces which are constructed subsequently.

\subsection{Review of some formal and analytic operators}\label{sec32a}

This section is devoted to recall some of the basic facts on formal and analytic transformations corresponding to the $q-$analogs of those appearing in the classical Borel-Laplace summability theory. These tools were developed in~\cite{dreyfus,ramis}.

Through the whole section, $q>1$ stands for a real number, and $k\ge1$ is a positive integer. $\mathbb{E}$ stands for a complex Banach space.

\begin{defin}
Given $\hat{f}(T)=\sum_{n\ge0}a_nT^n\in\mathbb{E}[[T]]$, the formal $q-$Borel transform of order $k$ of $\hat{f}(T)$ is defined by
$$\hat{\mathcal{B}}_{q;1/k}(\hat{f}(T))(\tau)=\sum_{n\ge0}a_n\frac{\tau^n}{(q^{1/k})^{n(n-1)/2}}\in\mathbb{E}[[\tau]].$$
\end{defin}

The next result, whose proof can be found in Proposition 5~\cite{maq}, is crucial in order to transform the main equation into an auxiliary one lying in the $q-$Borel plane.

\begin{prop}
Let $m\in\N$ and $j\in\mathbb{Q}$. For every $\hat{f}(T)\in\mathbb{E}[[T]]$, it holds that
$$\hat{\mathcal{B}}_{q;1/k}(T^m\sigma_{q;T}^j\hat{f}(T))(\tau)=\frac{\tau^m}{(q^{1/k})^{m(m-1)/2}}\sigma_{q;\tau}^{j-\frac{m}{k}}\left(\hat{\mathcal{B}}_{q;1/k}(\hat{f}(T))(\tau)\right).$$
\end{prop}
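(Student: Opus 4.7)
The proof is a direct coefficientwise computation; since all three operations involved (multiplication by a monomial in $T$, the dilation $\sigma_{q;T}^{j}$, and $\hat{\mathcal{B}}_{q;1/k}$) are $\mathbb{E}$-linear and continuous for the $(T)$-adic topology on $\mathbb{E}[[T]]$, I would reduce by linearity to checking the identity on the monomial $\hat{f}(T) = T^{n}$, and then check it on both sides.

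On the left, I would first compute $\sigma_{q;T}^{j}(T^{n}) = q^{jn} T^{n}$, multiply by $T^{m}$ to obtain $q^{jn} T^{n+m}$, and then apply the definition of the $q$-Borel transform to get
\[
\hat{\mathcal{B}}_{q;1/k}\bigl(T^{m}\sigma_{q;T}^{j} T^{n}\bigr)(\tau) \;=\; q^{jn}\,\frac{\tau^{n+m}}{(q^{1/k})^{(n+m)(n+m-1)/2}}.
\]
On the right, I would apply the definition of $\hat{\mathcal{B}}_{q;1/k}$ to $T^{n}$, then act by $\sigma_{q;\tau}^{j-m/k}$ and multiply by $\tau^{m}/(q^{1/k})^{m(m-1)/2}$, yielding
\[
\frac{\tau^{m}}{(q^{1/k})^{m(m-1)/2}}\,\sigma_{q;\tau}^{j-m/k}\!\left(\frac{\tau^{n}}{(q^{1/k})^{n(n-1)/2}}\right) \;=\; \bigl(q^{j-m/k}\bigr)^{n}\,\frac{\tau^{n+m}}{(q^{1/k})^{n(n-1)/2+m(m-1)/2}}.
\]

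Matching the two expressions reduces to verifying the scalar identity
\[
q^{jn - (j-m/k)n} \;=\; (q^{1/k})^{(n+m)(n+m-1)/2 \,-\, n(n-1)/2 \,-\, m(m-1)/2},
\]
i.e.\ $(q^{1/k})^{mn}$ on both sides. The left-hand exponent is obviously $mn/k$ as a power of $q$, and for the right-hand side the key combinatorial step is the expansion $(n+m)(n+m-1) = n(n-1) + m(m-1) + 2mn$, which gives exactly the exponent $mn$ of $q^{1/k}$. This is the only non-trivial computation; it is a routine quadratic identity, so I do not anticipate a real obstacle. Finally, I would extend the verified monomial identity to an arbitrary $\hat{f}(T) = \sum_{n\ge 0} a_{n} T^{n}$ by summing termwise, which is legitimate because each of the three operators preserves the valuation filtration and hence the equality holds at every order in $\tau$.
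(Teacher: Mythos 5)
Your computation is correct: reducing by linearity to $\hat{f}(T)=T^{n}$, the identity $(n+m)(n+m-1)=n(n-1)+m(m-1)+2mn$ is exactly what makes the two sides match, and termwise summation is legitimate since all operators act coefficientwise on formal series. The paper itself gives no proof, deferring to Proposition 5 of the cited reference \cite{maq}, where the argument is this same standard monomial-by-monomial verification, so your approach is essentially the canonical one.
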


The $q-$analog of Laplace transform used in this work was introduced in~\cite{viziozhang}, and makes use of a kernel given by the Jacobi Theta function of order $k$, given by
$$\Theta_{q^{1/k}}(x)=\sum_{n\in\mathbb{Z}}q^{-\frac{n(n-1)}{2k}}x^n,\quad x\in\C^{\star}.$$
We recall that Jacobi Theta function satisfies
$$\Theta_{q^{1/k}}\left(q^{\frac{m}{k}}x\right)=q^{\frac{m(m+1)}{2k}}x^m\Theta_{q^{1/k}}(x),\quad m\in\Z,\quad x\in\C^\star.$$
From Lemma 4.1~\cite{lamaq2}, given any $\tilde{\delta}>0$, there exists $C_{k,q}>0$ (independent of $\tilde{\delta}$) such that
\begin{equation}\label{e190}
\left|\Theta_{q^{1/k}}(x)\right|\ge C_{q,k}\tilde{\delta}\exp\left(\frac{k}{2}\frac{\log^2|x|}{\log(q)}\right)|x|^{1/2}, 
\end{equation}
for all $x\in\C^{\star}$ such that $|1+xq^{\frac{m}{k}}|>\tilde{\delta}$, for every $m\in\Z$. This last property allows to define a $q-$analog of Laplace transform with appropriate properties.

\begin{defin}\label{def120}
Let $\rho>0$ and $S_d$ be an unbounded sector with vertex at 0, and bisecting direction $d\in\R$. Let $f:S_d\cup D(0,\rho)\to\mathbb{E}$ be a holomorphic function, continuous up to the boundary, such that there exist $K>0$, and $\alpha\in\R$ with
$$\left\|f(\tau)\right\|_{\mathbb{E}}\le K\exp\left(\frac{k}{2}\frac{\log^2|\tau|}{\log(q)}+\alpha\log|\tau|\right),\quad \tau\in S_d,|\tau|\ge \rho,$$
$$\left\|f(\tau)\right\|_{\mathbb{E}}\le K,\quad \tau\in\overline{D}(0,\rho).$$
We choose an argument $\gamma\in\R$ within the set of arguments in $S_d$ and define the $q-$Laplace transform of order $k$ of $f$ in direction $\gamma$ by
$$\mathcal{L}^{\gamma}_{q;1/k}(f(\tau))(T)=\frac{1}{\pi_{q^{1/k}}}\int_{L_{\gamma}}\frac{f(u)}{\Theta_{q^{1/k}}\left(\frac{u}{T}\right)}\frac{du}{u},$$
where $L_{\gamma}=\{re^{\gamma\sqrt{-1}}:r\in(0,\infty)\}$, and $\pi_{q^{1/k}}:=\frac{\log(q)}{k}\prod_{n\ge 0}(1-q^{-\frac{n+1}{k}})^{-1}$.
\end{defin}

The proof of the next results can be found in detail in Lemma 4 and Proposition 6~\cite{maq}.

\begin{lemma}
Let $\tilde{\delta}>0$. In the situation of Definition~\ref{def120}, the integral transform $\mathcal{L}^{\gamma}_{q;1/k}(f(\tau))(T)$ defines a bounded holomorphic function on the domain $\mathcal{R}_{\gamma,\tilde{\delta}}\cap D(0,r_1)$ for every $0<r_1\le q^{(1/2-\alpha)/k}/2$, where
$$\mathcal{R}_{\gamma,\tilde{\delta}}=\left\{T\in\C^{\star}:\left|1+\frac{re^{\gamma\sqrt{-1}}}{T}\right|>\tilde{\delta},\hbox{ for all }r\ge 0\right\}.$$
The value of $\mathcal{L}^{\gamma}_{q;1/k}(f(\tau))(T)$ does not depend on the choice of $\gamma$ under $e^{\gamma\sqrt{-1}}\in S_d$. 
\end{lemma}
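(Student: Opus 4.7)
The plan is to split the proof into two steps: first, to show that the integral defines a bounded holomorphic function on $\mathcal{R}_{\gamma,\tilde{\delta}}\cap D(0,r_1)$, and second, to show independence from the choice of $\gamma$. The core of the first step is a careful pointwise estimate on the integrand coming from the $q$-exponential growth bound on $f$ and the $q$-exponential lower bound (\ref{e190}) on $\Theta_{q^{1/k}}$.

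For the first step, parametrizing the ray as $u=re^{\sqrt{-1}\gamma}$ and setting $t=|T|$, for $r\ge\rho$ I would majorize the integrand by
\[
\left\|\frac{f(u)}{\Theta_{q^{1/k}}(u/T)\,u}\right\|_{\mathbb{E}}\le \frac{K}{C_{q,k}\tilde{\delta}}\exp\!\left(\frac{k}{2\log q}\bigl(\log^2 r-\log^2(r/t)\bigr)+\alpha\log r\right)\frac{t^{1/2}}{r^{3/2}}.
\]
Using the identity $\log^2 r-\log^2(r/t)=2\log r\log t-\log^2 t$, this reduces to $C(t)\,r^{k\log t/\log q+\alpha-3/2}$ with $C(t)=(K/(C_{q,k}\tilde{\delta}))\,t^{1/2}\exp(-k\log^2 t/(2\log q))$. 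The tail integral over $(\rho,\infty)$ converges precisely when the exponent is strictly below $-1$, which is equivalent to $t<q^{(1/2-\alpha)/k}$; the hypothesis $r_1\le q^{(1/2-\alpha)/k}/2$ provides a uniform spectral margin, so the majorant is uniformly integrable on $D(0,r_1)$. The contribution from $0<r<\rho$ is controlled by continuity of $f$ on $\overline{D}(0,\rho)$ and the uniform lower bound on $|\Theta_{q^{1/k}}(u/T)|$ supplied by the condition defining $\mathcal{R}_{\gamma,\tilde{\delta}}$. Holomorphy in $T$ then follows from the standard dominated convergence / Morera argument, the integrand being holomorphic in $T$ on the open set and admitting a $T$-independent integrable majorant.

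For independence from $\gamma$, I would fix two admissible arguments $\gamma,\gamma'$ with both rays in $S_d$ (sufficiently close that the discrete zeros of $x\mapsto\Theta_{q^{1/k}}(x/T)$ avoid the enclosed sector), and apply Cauchy's theorem to the closed contour formed by $L_\gamma$ and $L_{\gamma'}$ truncated between radii $\varrho$ and $R$, closed by two circular arcs centered at the origin. The integrand is holomorphic in $u$ on the enclosed sector, which sits inside $S_d\cup D(0,\rho)$; the inner arc vanishes as $\varrho\to 0^+$ by continuity of $f$ near the origin together with the bound on $1/\Theta_{q^{1/k}}$, and the outer arc vanishes as $R\to\infty$ since the exponent in the same estimate from the first step becomes strictly negative under $|T|\le r_1$. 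I expect the main technical subtlety to be the precise $\log^2$ identity above that isolates the threshold $q^{(1/2-\alpha)/k}$: this balance between the $q$-exponential growth of $f$ and the $q$-exponential decay of $1/\Theta_{q^{1/k}}$ is what determines the admissible size of $|T|$, and what forces the factor $1/2$ appearing in $r_1\le q^{(1/2-\alpha)/k}/2$.
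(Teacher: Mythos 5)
Your argument is correct and is essentially the standard proof of this statement, which the paper does not reprove but imports from Lemma 4 of~\cite{maq}: the balance $\log^2 r-\log^2(r/|T|)=2\log r\log|T|-\log^2|T|$ between the growth of $f$ and the lower bound (\ref{e190}) for the theta kernel, yielding a tail exponent $\frac{k\log|T|}{\log q}+\alpha-\frac{3}{2}<-1$ exactly when $|T|<q^{(1/2-\alpha)/k}$ (the factor $1/2$ in $r_1$ only securing a uniform margin), then holomorphy by a dominated-convergence/Morera argument, and direction independence by Cauchy's theorem between nearby rays. One point you should state more carefully is the piece $0<r<\rho$: since $du/u$ contributes a non-integrable factor $1/r$ at the origin, a mere positive lower bound on $|\Theta_{q^{1/k}}(u/T)|$ is not enough; you need the full strength of (\ref{e190}), whose right-hand side blows up as $|u/T|\to0$, so that $1/|\Theta_{q^{1/k}}(u/T)|\le C\exp\bigl(-\tfrac{k}{2\log q}\log^2(r/|T|)\bigr)(|T|/r)^{1/2}$ decays faster than any power of $r$ and absorbs the $1/r$. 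Also, your caveat in the deformation step is not cosmetic but essential: the zeros of $u\mapsto\Theta_{q^{1/k}}(u/T)$ form the discrete set $-Tq^{\mathbb{Z}/k}$ lying on the ray of argument $\arg(T)+\pi$, so the independence of $\gamma$ is to be understood for directions close enough (relative to $\tilde{\delta}$) that this ray is not swept — crossing it would produce nonzero residues (the source of the $q$-Stokes phenomenon) — and the general statement then follows by chaining overlapping directions, which is exactly how it is used in the paper.
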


\begin{prop}\label{prop6}
Let $f$ be as in Definition~\ref{def120}, and let $\tilde{\delta}>0$. Then, for all $\sigma\ge 0$ one has
$$T^{\sigma}\sigma_q^{j}(\mathcal{L}^{\gamma}_{q;1/k}f(\tau))(T)=\mathcal{L}_{q;1/k}^{\gamma}\left(\frac{\tau^\sigma}{(q^{1/k})^{\sigma(\sigma-1)/2}}\sigma_q^{j-\frac{\sigma}{k}}f(\tau)\right)(T),$$
for all $T\in\mathcal{R}_{\gamma,\tilde{\delta}}\cap D(0,r_1)$, where $0<r_1<q^{(\frac{1}{2}-\alpha)/k}/2$.
\end{prop}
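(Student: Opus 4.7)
The plan is a direct computation starting from the integral definition of $\mathcal{L}^{\gamma}_{q;1/k}$, using only two ingredients already introduced in the excerpt: the functional equation of the Jacobi theta function and a positive-scalar change of variable along the half-line $L_{\gamma}$.

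Concretely, I would rewrite the left-hand side as
\begin{equation*}
T^{\sigma}\sigma_q^{j}(\mathcal{L}^{\gamma}_{q;1/k}f)(T) = \frac{T^{\sigma}}{\pi_{q^{1/k}}}\int_{L_\gamma} \frac{f(u)}{\Theta_{q^{1/k}}\!\left(\frac{u}{q^{j}T}\right)}\,\frac{du}{u},
\end{equation*}
perform the substitution $u=q^{j-\sigma/k}v$, which maps $L_{\gamma}$ bijectively onto itself (as $q^{j-\sigma/k}>0$) and leaves the multiplicative measure $du/u$ invariant, and then invoke the relation $\Theta_{q^{1/k}}(q^{m/k}x)=q^{m(m+1)/(2k)}x^{m}\Theta_{q^{1/k}}(x)$ at $m=-\sigma$ and $x=v/T$ to simplify the transformed denominator into $q^{\sigma(\sigma-1)/(2k)}(v/T)^{-\sigma}\Theta_{q^{1/k}}(v/T)$. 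The factor $T^{\sigma}$ outside the integral is absorbed by $(v/T)^{-\sigma}$ in the denominator, producing a $v^{\sigma}$, while $f(u)$ becomes $f(q^{j-\sigma/k}v)=\sigma_{q}^{j-\sigma/k}f(v)$; the resulting integral is then exactly the $q$-Laplace transform of $\frac{\tau^{\sigma}}{(q^{1/k})^{\sigma(\sigma-1)/2}}\,\sigma_{q}^{j-\sigma/k}f(\tau)$ in direction $\gamma$, which is the claimed right-hand side.

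The step I expect to require the most care is the invocation of the theta functional equation, since the identity recalled in the excerpt is stated for integer $m$ only, whereas the proposition is asserted for every real $\sigma \ge 0$. In practice $\sigma$ is a non-negative integer in all the applications of this result in the subsequent sections, which is enough for the argument above to go through verbatim; the general real case can then be recovered by analytic/continuous dependence of both sides on $\sigma$. The remaining point to verify, before reading the right-hand side as a well-defined $q$-Laplace transform, is that $\frac{\tau^{\sigma}}{(q^{1/k})^{\sigma(\sigma-1)/2}}\,\sigma_{q}^{j-\sigma/k}f(\tau)$ still satisfies the growth bounds of Definition~\ref{def120} on $S_{d}\cup D(0,\rho)$, possibly with a shifted exponent $\alpha$ and a slightly smaller admissible radius $r_{1}$; this is immediate from the shape of those bounds, since multiplication by $\tau^{\sigma}$ and dilation by the positive $q$-power $q^{j-\sigma/k}$ only modify the polynomial and $q$-exponential factors in a controlled way.
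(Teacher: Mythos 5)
Your computation is correct and is essentially the intended argument: the paper gives no proof of its own but defers to Proposition 6 of~\cite{maq}, whose proof is exactly this combination of the positive rescaling $u=q^{j-\sigma/k}v$ of $L_\gamma$ (which preserves the half-line and the measure $du/u$) with the quasi-periodicity $\Theta_{q^{1/k}}(q^{m/k}x)=q^{m(m+1)/(2k)}x^m\Theta_{q^{1/k}}(x)$, and your bookkeeping of the factor $(q^{1/k})^{\sigma(\sigma-1)/2}$ and of the convergence of the transformed integrand is accurate.

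The one point to be careful about is your treatment of non-integer $\sigma$. You correctly observe that the theta functional equation is only valid for integer shifts, but the proposed repair by ``analytic/continuous dependence of both sides on $\sigma$'' does not work: knowing that two functions of $\sigma$ agree at the non-negative integers does not force them to agree for all real $\sigma\ge 0$ (one can always add a multiple of $\sin(\pi\sigma)$), and no growth-in-$\sigma$ argument in the style of Carlson's theorem is set up here. In fact, for non-integer $\sigma$ the identity is genuinely problematic as written ($T^\sigma$, $\tau^\sigma$ and $x^{-\sigma}$ become multivalued and the quasi-periodicity of $\Theta_{q^{1/k}}$ fails), so the correct reading is that $\sigma$ is a non-negative integer; this is how the proposition is used later in the paper, where $\sigma$ is taken to be $d_{\ell_1}$ or $d_{D_1}$, quantities that also serve as polynomial degrees in the factorization of $P_m(\tau)$ and hence are integers. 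With that restriction made explicit, your proof is complete and coincides with the one in the cited reference.
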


We conclude with the definition and properties regarding the inverse Fourier transform. 

\begin{defin}
Let $\mu,\beta\in\R$. We write $E_{(\beta,\mu)}$ for the vector space of continuous functions $h:\R\to\C$ such that 
$$\left\|h(m)\right\|_{(\beta,\mu)}=\sup_{m\in\R}(1+|m|)^{\mu}\exp(\beta|m|)|h(m)|<\infty.$$
The pair $(E_{(\beta,\mu)},\left\|\cdot\right\|_{(\beta,\mu)})$ turns out to be a Banach space.
\end{defin}

\begin{prop}
Let $\mu>1,\beta>0$ and $f\in E_{(\beta,\mu)}$. The inverse Fourier transform of $f$ is defined by
$$\mathcal{F}^{-1}(f)(x)=\frac{1}{(2\pi)^{1/2}}\int_{-\infty}^{\infty}f(m)\exp(ixm)dm,\quad x\in\R.$$
This function can be extended to an analytic function on the horizontal strip $H_\beta=\{z\in\C:|\hbox{Im}(z)|<\beta\}$. Moreover, it holds that the function $\phi(m)=imf(m)$ belongs to $E_{(\beta,\mu-1)}$ and
$$\partial_z\mathcal{F}^{-1}(f)(z)=\mathcal{F}^{-1}(\phi)(z),\quad z\in H_\beta.$$
In addition to this, it holds that the convolution product of $f\in E_{(\beta,\mu)}$ and $g\in E_{(\beta,\mu)}$, defined by
$$\psi(m)=\frac{1}{(2\pi)^{1/2}}\int_{-\infty}^{\infty}f(m-m_1)g(m_1)dm_1$$
is such that $\psi\in E_{(\beta,m)}$, and $\mathcal{F}^{-1}(f)(z)\mathcal{F}^{-1}(g)(z)=\mathcal{F}^{-1}(\psi)(z),$ for every $z\in H_\beta$.
\end{prop}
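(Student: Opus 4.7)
The plan is to verify the three assertions separately: holomorphic extension of $\mathcal{F}^{-1}(f)$ to $H_\beta$, the commutation of $\partial_z$ with the integral defining it, and the convolution identity. Throughout, the underlying mechanism is that the weight $e^{\beta|m|}$ built into $E_{(\beta,\mu)}$ exactly compensates the factor $|e^{izm}|=e^{-\hbox{Im}(z)m}$ on the open strip $H_\beta$.

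For the holomorphic extension, I would simply replace $x\in\R$ by $z=x+iy$ with $|y|<\beta$ in the defining integral. The integrand is dominated in absolute value by
$$|f(m)e^{izm}|\leq \|f\|_{(\beta,\mu)}(1+|m|)^{-\mu}e^{-(\beta-|y|)|m|},$$
which is in $L^{1}(\R,dm)$ and uniformly so on any compact subset of $H_\beta$. Analyticity then follows by a standard Morera--Fubini argument (or, equivalently, by differentiating under the integral, using that the $L^{1}$-majorant is independent of $z$ on a closed sub-strip).

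For the derivative identity, membership $\phi\in E_{(\beta,\mu-1)}$ is immediate from
$$(1+|m|)^{\mu-1}e^{\beta|m|}|imf(m)|\leq (1+|m|)^{\mu}e^{\beta|m|}|f(m)|\leq \|f\|_{(\beta,\mu)}.$$
The same exponential-decay bound as above, now applied with the extra factor $|m|$, produces an integrable majorant $C(1+|m|)e^{-(\beta-|y|)|m|}$ which justifies exchanging $\partial_{z}$ with the integral; since $\partial_{z}(f(m)e^{izm})=imf(m)e^{izm}$, one obtains $\partial_{z}\mathcal{F}^{-1}(f)(z)=\mathcal{F}^{-1}(\phi)(z)$.

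The real work is the convolution statement, where the notation ``$E_{(\beta,m)}$'' is a misprint for $E_{(\beta,\mu)}$. I would start from
$$|\psi(m)|\leq\frac{\|f\|_{(\beta,\mu)}\|g\|_{(\beta,\mu)}}{(2\pi)^{1/2}}\int_{\R}(1+|m-m_{1}|)^{-\mu}(1+|m_{1}|)^{-\mu}e^{-\beta(|m-m_{1}|+|m_{1}|)}\,dm_{1},$$
and extract the factor $e^{-\beta|m|}$ using $|m|\leq|m-m_{1}|+|m_{1}|$. The remaining polynomial convolution $\int_{\R}(1+|m-m_{1}|)^{-\mu}(1+|m_{1}|)^{-\mu}\,dm_{1}$ is bounded by $C(1+|m|)^{-\mu}$ whenever $\mu>1$, which I would prove by splitting $\R$ into the regions $\{|m_{1}|\leq|m|/2\}$ and $\{|m-m_{1}|\leq|m|/2\}$ (on each, one factor is controlled by $(1+|m|/2)^{-\mu}$ and the other integrates to a finite constant since $\mu>1$) together with an elementary estimate on the remaining set. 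Assembling these gives $(1+|m|)^{\mu}e^{\beta|m|}|\psi(m)|\leq C\|f\|_{(\beta,\mu)}\|g\|_{(\beta,\mu)}$. The identity $\mathcal{F}^{-1}(f)\mathcal{F}^{-1}(g)=\mathcal{F}^{-1}(\psi)$ then comes from Fubini's theorem applied to
$$\mathcal{F}^{-1}(f)(z)\mathcal{F}^{-1}(g)(z)=\frac{1}{2\pi}\int_{\R}\!\!\int_{\R} f(m_{1})g(m_{2})e^{iz(m_{1}+m_{2})}\,dm_{1}\,dm_{2},$$
followed by the change of variable $m=m_{1}+m_{2}$ in the inner integral. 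The main obstacle is the polynomial convolution estimate; everything else reduces to locating a uniform integrable majorant, which the weighted Banach norm provides.
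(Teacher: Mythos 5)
Your proof is correct. The paper states this proposition without proof (it is recalled in the review section as a known result), and your argument — an integrable majorant $\|f\|_{(\beta,\mu)}(1+|m|)^{-\mu}e^{-(\beta-|y|)|m|}$ with Morera/differentiation under the integral for the analytic extension and the derivative formula, the trivial inequality $|m|\le 1+|m|$ for $\phi\in E_{(\beta,\mu-1)}$, the triangle-inequality extraction of $e^{-\beta|m|}$ together with the standard splitting estimate $\int_{\R}(1+|m-m_1|)^{-\mu}(1+|m_1|)^{-\mu}dm_1\le C(1+|m|)^{-\mu}$ for $\mu>1$, and Fubini with the substitution $m=m_1+m_2$ for the product identity — is exactly the standard argument used in the cited literature; you are also right that ``$E_{(\beta,m)}$'' is a misprint for $E_{(\beta,\mu)}$.
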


\subsection{Banach spaces of functions of $q-$exponential growth and exponential decay}\label{sec33}

In this section, we state the definition of the complex Banach space $qExp_{(k,\beta,\mu,\alpha)}^{d}$. The analytic solution of the main equation under study is built departing from one element in such Banach space via $q-$Laplace transform and inverse Fourier transform. Similar versions of this Banach space have already appeared in previous works by the authors such as~\cite{lamaq2,lamaq}, and the contribution~\cite{maq} of the second author.

In the whole section, we fix real numbers $\beta,\mu,k>0$, $q,\delta>1$ and $\alpha$. We also set $d\in\R$ and choose  an infinite sector $S_{d}$ of bisecting direction $d$, with vertex at the origin; and the closed disc $\overline{D}(0,\rho)$, for some $\rho>0$. 

\begin{defin}\label{defi11}
Let $qExp_{(k,\beta,\mu,\alpha)}^{d}$ be the vector space of all continuous complex valued functions $(\tau,m)\mapsto f(\tau,m)$ on $(S_d\cup \overline{D}(0,\rho))\times\R$ which are holomorphic with respect to the first variable on $S_d\cup D(0,\rho)$ and such that
$$\left\|f(\tau,m)\right\|_{(k,\beta,\mu,\alpha)}:=\sup_{\substack{\tau\in S_d\cup \overline{D}(0,\rho)\\m\in\R}}(1+|m|)^{\mu}e^{\beta|m|}\exp\left(\frac{-k\log^2(|\tau|+\delta)}{2\log(q)}-\alpha\log(|\tau|+\delta)\right)|f(\tau,m)|$$
is finite. The pair $(qExp_{(k,\beta,\mu,\alpha)}^{d},\left\|\cdot\right\|_{(k,\beta,\mu,\alpha)})$ is a Banach space.
\end{defin}

In the forthcoming results, we describe some properties on the elements of the previous Banach space when combined with certain operators. The first result is a direct consequence of the previous definition.

\begin{lemma}\label{lema1}
Let $(\tau,m)\mapsto a(\tau,m)$ be a bounded continuous function defined on $(S_d\cup \overline{D}(0,\rho))\times\R$, holomorphic with respect to $\tau$ on the set $S_d\cup D(0,\rho)$. For every $f(\tau,m)\in qExp_{(k,\beta,\mu,\alpha)}^{d}$ one has that $a(\tau,m)f(\tau,m)$ belongs to $qExp_{(k,\beta,\mu,\alpha)}^{d}$, and it holds that 
$$ \left\|a(\tau,m)f(\tau,m)\right\|_{(k,\beta,\mu,\alpha)}\le C_1 \left\|f(\tau,m)\right\|_{(k,\beta,\mu,\alpha)},$$
with $C_1:=\sup_{\substack{\tau\in S_d\cup\overline{D}(0,\rho)\\m\in\R}}|a(\tau,m)|$.
\end{lemma}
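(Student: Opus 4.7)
The plan is a very direct verification from the definition of the norm; the result is essentially the statement that multiplication by a bounded holomorphic multiplier is continuous on this weighted Banach space with operator norm at most $C_1$. So there is no deep obstacle: the only substantive content is that the weights defining the norm depend only on $|\tau|$ and $m$, and the multiplier $a(\tau,m)$ is bounded uniformly in $(\tau,m)$.

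First I would check that the product $a(\tau,m) f(\tau,m)$ lies in the right class of functions. Continuity of $af$ on $(S_d \cup \overline{D}(0,\rho)) \times \mathbb{R}$ is immediate from continuity of both factors, and holomorphy of $af$ in $\tau \in S_d \cup D(0,\rho)$ follows because both $a$ and $f$ are holomorphic there and the product of holomorphic functions is holomorphic.

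Next I would estimate the norm. Fix $(\tau,m) \in (S_d \cup \overline{D}(0,\rho)) \times \mathbb{R}$ and write
$$(1+|m|)^{\mu} e^{\beta|m|} \exp\!\left(-\tfrac{k \log^{2}(|\tau|+\delta)}{2\log(q)} - \alpha \log(|\tau|+\delta)\right) |a(\tau,m) f(\tau,m)|,$$
factor out $|a(\tau,m)|$ and bound it by $C_1 = \sup_{\tau,m} |a(\tau,m)|$. What remains is exactly the quantity whose supremum is $\|f\|_{(k,\beta,\mu,\alpha)}$. Taking the supremum over $(\tau,m)$ yields $\|af\|_{(k,\beta,\mu,\alpha)} \leq C_1 \|f\|_{(k,\beta,\mu,\alpha)}$, which in particular shows $af \in qExp^d_{(k,\beta,\mu,\alpha)}$.

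No step here is really an obstacle: the only subtlety worth flagging is that $C_1$ is taken as a supremum over the same domain on which the norm is defined (so $\tau \in S_d \cup \overline{D}(0,\rho)$ and $m \in \mathbb{R}$), which is precisely the hypothesis assumed on $a$. I would end by noting that since the estimate is pointwise before taking the supremum, the inequality is preserved after the supremum, giving the stated bound with the explicit constant $C_1$.
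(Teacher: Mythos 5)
Your proof is correct and matches the paper's intent exactly: the paper states this lemma without proof as ``a direct consequence of the previous definition,'' and your verification (factor out $|a(\tau,m)|\le C_1$ pointwise, then take suprema) is precisely that direct argument.
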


\begin{prop}\label{prop1}
Let $\gamma_1,\gamma_2,\gamma_3\ge0$ such that
\begin{equation}\label{e136}
\gamma_2\le k\gamma_3+\gamma_1.
\end{equation}
Let $a_{\gamma_1}(\tau)$ be a bounded holomorphic function defined on $S_d\cup D(0,\rho)$ with $(1+|\tau|)^{\gamma_1}|a_{\gamma_1}(\tau)|\le 1$ for every $\tau\in S_d\cup D(0,\rho)$. Then, for all $f\in qExp_{(k,\beta,\mu,\alpha)}^{d}$ it holds that 
\begin{equation}\label{e140}
\left\|a_{\gamma_1}(\tau)\tau^{\gamma_2}\sigma_{q;\tau}^{-\gamma_3}f(\tau,m)\right\|_{(k,\beta,\mu,\alpha)}\le C_2\left\|f(\tau,m)\right\|_{(k,\beta,\mu,\alpha)},
\end{equation}
for some $C_2>0$.
\end{prop}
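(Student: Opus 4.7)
The plan is to reduce the inequality (\ref{e140}) to a one-variable estimate on the modulus $|\tau|$ by first stripping off the $(1+|m|)^{\mu}e^{\beta|m|}$ factors, which behave identically on both sides because the dilation $\sigma_{q;\tau}^{-\gamma_3}$ only acts on $\tau$. Concretely, for fixed $\tau$ and $m$, I would estimate
\begin{align*}
\bigl|f(q^{-\gamma_3}\tau,m)\bigr| \le \|f\|_{(k,\beta,\mu,\alpha)}\,(1+|m|)^{-\mu}e^{-\beta|m|}\exp\!\left(\frac{k\log^2(q^{-\gamma_3}|\tau|+\delta)}{2\log q}+\alpha\log(q^{-\gamma_3}|\tau|+\delta)\right),
\end{align*}
plug this into the definition of $\|\,\cdot\,\|_{(k,\beta,\mu,\alpha)}$ applied to $a_{\gamma_1}(\tau)\tau^{\gamma_2}\sigma_{q;\tau}^{-\gamma_3}f(\tau,m)$, and use $|a_{\gamma_1}(\tau)|\le(1+|\tau|)^{-\gamma_1}$. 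Writing $x=|\tau|\ge 0$, the claim (\ref{e140}) reduces to the boundedness on $[0,\infty)$ of
\begin{equation*}
g(x):=(1+x)^{-\gamma_1}x^{\gamma_2}\exp\!\left(-\tfrac{k}{2\log q}\bigl[\log^2(x+\delta)-\log^2(q^{-\gamma_3}x+\delta)\bigr]-\alpha\bigl[\log(x+\delta)-\log(q^{-\gamma_3}x+\delta)\bigr]\right).
\end{equation*}

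Next I would verify that $g$ is continuous on $[0,\infty)$ (clear, since $\delta>1$ keeps all logarithms finite) and compute its behavior at the two endpoints. At $x=0$ the bracketed differences vanish and the prefactor is finite (it equals $0$ if $\gamma_2>0$ and $1$ if $\gamma_2=0$). At $x\to\infty$ I would factor the difference of squares
\begin{equation*}
\log^2(x+\delta)-\log^2(q^{-\gamma_3}x+\delta)=\bigl[\log(x+\delta)+\log(q^{-\gamma_3}x+\delta)\bigr]\log\!\frac{x+\delta}{q^{-\gamma_3}x+\delta},
\end{equation*}
observe that $\log\frac{x+\delta}{q^{-\gamma_3}x+\delta}\to\gamma_3\log q$ while $\log(x+\delta)+\log(q^{-\gamma_3}x+\delta)=2\log x-\gamma_3\log q+o(1)$, hence
\begin{equation*}
-\tfrac{k}{2\log q}\bigl[\log^2(x+\delta)-\log^2(q^{-\gamma_3}x+\delta)\bigr]=-k\gamma_3\log x+O(1),
\end{equation*}
and the $\alpha$-term contributes only an $O(1)$ quantity. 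Combining this with $(1+x)^{-\gamma_1}x^{\gamma_2}\sim x^{\gamma_2-\gamma_1}$ gives $g(x)=O(x^{\gamma_2-\gamma_1-k\gamma_3})$ as $x\to\infty$, which is bounded precisely under the standing hypothesis (\ref{e136}).

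Since $g$ is continuous and bounded at both endpoints of $[0,\infty)$, it is bounded on the entire half-line, and setting $C_2:=\sup_{x\ge 0}g(x)$ yields (\ref{e140}). The only delicate point is the asymptotic analysis of the log-square difference: the factor of $2\log x$ from the sum of logs, multiplied by the limit $\gamma_3\log q$ of the log-ratio, is what converts the Theta-weight asymmetry into an effective polynomial factor $x^{-k\gamma_3}$, and it is exactly this compensation that is matched against the $x^{\gamma_2}$ growth and the $(1+x)^{-\gamma_1}$ decay by condition (\ref{e136}). The remainder of the argument is straightforward bookkeeping.
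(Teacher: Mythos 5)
Your proposal is correct and follows essentially the same route as the paper's proof: after stripping the $m$-weights, both arguments reduce the estimate to the boundedness on $[0,\infty)$ of exactly the same one-variable function (your $g(x)$ is the paper's $C_{21}(|\tau|)$), with the log-square difference producing the compensating factor $x^{-k\gamma_3}$ that is matched against $x^{\gamma_2}(1+x)^{-\gamma_1}$ via (\ref{e136}). The only differences are in the final calculus step — the paper rewrites $\log(x/q^{\gamma_3}+\delta)=\log(x+\delta q^{\gamma_3})-\gamma_3\log(q)$ and uses the mean value theorem to get a bound uniform in $x$, whereas you use continuity plus endpoint asymptotics — so you should just record that $\log\frac{x+\delta}{q^{-\gamma_3}x+\delta}=\gamma_3\log(q)+O(1/x)$ (not merely convergence), which is what makes the exponent error genuinely $O(1)$ and keeps $g$ bounded in the borderline case $\gamma_2=k\gamma_3+\gamma_1$ allowed by (\ref{e136}), and note explicitly that $q^{-\gamma_3}\tau$ remains in $S_d\cup\overline{D}(0,\rho)$, which justifies bounding $|f(q^{-\gamma_3}\tau,m)|$ by the norm of $f$.
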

\begin{proof}
Let $f\in qExp_{(k,\beta,\mu,\alpha)}^{d}$. Then, it holds that
\begin{multline}
\left\|a_{\gamma_1}(\tau)\tau^{\gamma_2}\sigma_{q;\tau}^{-\gamma_3}f(\tau,m)\right\|_{(k,\beta,\mu,\alpha)}\\
=\sup_{\substack{\tau\in S_d\cup \overline{D}(0,\rho)\\m\in\R}}C_{21}(|\tau|)(1+|m|)^{\mu}e^{\beta|m|}\exp\left(\frac{-k\log^2(\frac{|\tau|}{q^{\gamma_3}}+\delta)}{2\log(q)}-\alpha\log\left(\frac{|\tau|}{q^{\gamma_3}}+\delta\right)\right)\left|f\left(\frac{\tau}{q^{\gamma_3}},m\right)\right|\\
\le \left\|f(\tau,m)\right\|_{(k,\beta,\mu,\alpha)}\sup_{\tau\in S_d\cup \overline{D}(0,\rho)}C_{21}(|\tau|)
\end{multline}
with 
\begin{multline}
C_{21}(|\tau|)= \frac{|\tau|^{\gamma_2}}{(1+|\tau|)^{\gamma_1}}\exp\left(\frac{-k\left(\log^2(|\tau|+\delta)-\log^2(\frac{|\tau|}{q^{\gamma_3}}+\delta)\right)}{2\log(q)}\right)\\
\times\exp\left(-\alpha\log(|\tau|+\delta)+\alpha\log\left(\frac{|\tau|}{q^{\gamma_3}}+\delta\right)\right).
\end{multline}
Observe that $\sigma_{q;\tau}^{-\gamma_3}(S_d\cup \overline{D}(0,\rho))\subseteq S_d\cup \overline{D}(0,\rho)$. In order to give upper bounds for $C_{21}(|\tau|)$, we have
\begin{align*}
\log^2&(\frac{|\tau|}{q^{\gamma_3}}+\delta)-\log^2(|\tau|+\delta)\\
&=\log^2(|\tau|+\delta q^{\gamma_3})-\log^2(|\tau|+\delta)+(\log(q)\gamma_3)^2-2\gamma_3\log(q)\log(|\tau|+\delta q^{\gamma_3}).
\end{align*}
The mean value theorem guarantees that 
$$\log^2(|\tau|+\delta q^{\gamma_3})-\log^2(|\tau|+\delta)\le 2\delta(q^{\gamma_3}-1)\max_{x\in [|\tau|+\delta,|\tau|+\delta q^{\gamma_3}]}\frac{\log(x)}{x}\le C_{22},$$
for some positive constant $C_{22}$ which does not depend on $\tau\in S_d\cup \overline{D}(0,\rho)$.
This entails that
$$C_{21}(|\tau|)\le \exp\left(\frac{kC_{22}}{2\log(q)}+(\log(q)\gamma_3)^2\right)\frac{|\tau|^{\gamma_2}}{(1+|\tau|)^{\gamma_1}}\left(\frac{|\tau|/q^{\gamma_3}+\delta}{|\tau|+\delta}\right)^{\alpha}\frac{1}{(|\tau|+\delta q^{\gamma_3})^{k\gamma_3}},$$
for every $\tau\in S_d\cup \overline{D}(0,\rho)$. We conclude the result by taking into account (\ref{e136}). 
\end{proof}

The next result is stated without proof, heavily rests on Proposition 2~\cite{lama19}, and it is based on bounds stated in Lemma 2.2~\cite{costintanveer}.

\begin{prop}\label{prop235}
Let $R_1,R_2\in\C[X]$ with $\hbox{deg}(R_1)\ge \hbox{deg}(R_2)$, and such that $R_1(im)\neq 0$ for all $m\in\R$. Let $\mu>\hbox{deg}(R_2)+1$. Given $f\in E_{(\beta,\mu)}$ and $g\in q\hbox{Exp}^d_{(k,\beta,\mu,\alpha)}$, then the function
$$\Phi(\tau,m):=\frac{1}{R_1(im)}\int_{-\infty}^{\infty}f(m-m_1)R_2(im_1)g(\tau,m_1)dm_1$$
belongs to $q\hbox{Exp}^d_{(k,\beta,\mu,\alpha)}$, and there exists $C'_2>0$ such that
$$\left\|\Phi(\tau,m)\right\|_{(k,\beta,\mu,\alpha)}\le C'_2\left\|f(m)\right\|_{(\beta,\mu)}\left\|g(\tau,m)\right\|_{(k,\beta,\mu,\alpha)}.$$
\end{prop}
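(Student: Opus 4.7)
The plan is to bound $|\Phi(\tau,m)|$ pointwise, extract both norms, and reduce the problem to a uniform-in-$m$ estimate of a purely real integral. Concretely, I insert the two pointwise majorants
$$|f(m-m_1)|\le (1+|m-m_1|)^{-\mu}e^{-\beta|m-m_1|}\|f\|_{(\beta,\mu)},$$
$$|g(\tau,m_1)|\le (1+|m_1|)^{-\mu}e^{-\beta|m_1|}\exp\!\Bigl(\tfrac{k\log^2(|\tau|+\delta)}{2\log q}+\alpha\log(|\tau|+\delta)\Bigr)\|g\|_{(k,\beta,\mu,\alpha)}$$
into the defining integral and use the triangle inequality $|m|\le|m-m_1|+|m_1|$ to cancel $e^{\beta|m|}$ against $e^{-\beta|m-m_1|}e^{-\beta|m_1|}$. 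The $\tau$-weight factors out of the $m_1$-integral cleanly since it does not depend on $m_1$, so verifying that $\Phi\in q\mathrm{Exp}^d_{(k,\beta,\mu,\alpha)}$ reduces to showing that
$$\mathcal{I}(m):=\frac{(1+|m|)^\mu}{|R_1(im)|}\int_{-\infty}^{\infty}(1+|m-m_1|)^{-\mu}(1+|m_1|)^{-\mu}|R_2(im_1)|\,dm_1$$
is bounded uniformly in $m\in\R$.

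Bounding $\mathcal{I}(m)$ is precisely the content of Lemma~2.2 in~\cite{costintanveer} and Proposition~2 in~\cite{lama19}: the assumption $R_1(im)\ne 0$ for all real $m$ together with $\deg R_1\ge\deg R_2$ yields $|R_2(im)|/|R_1(im)|\lesssim (1+|m|)^{\deg R_2-\deg R_1}$, and the condition $\mu>\deg R_2+1$ ensures absolute convergence of the $m_1$-integral. The standard argument is to split the integration region according to whether $|m_1|\le|m|/2$ or $|m_1|\ge|m|/2$: in the first case $1+|m-m_1|\gtrsim 1+|m|$, absorbing the front factor $(1+|m|)^\mu$ and leaving an integrable tail $(1+|m_1|)^{-\mu}|R_2(im_1)|$; in the second case $1+|m_1|\gtrsim 1+|m|$ plays the same role, and the remaining factor $(1+|m-m_1|)^{-\mu}$ stays integrable. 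I would quote these references rather than reproduce the splitting.

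Holomorphy of $\Phi$ in $\tau\in S_d\cup D(0,\rho)$ and continuity in $m$ follow by dominated convergence with the same majorants, and the resulting constant is $C'_2=\sup_{m\in\R}\mathcal{I}(m)$ multiplied by a factor depending only on $\beta$. The only genuinely delicate point is the marginal case $\deg R_1=\deg R_2$, where $R_2(im_1)/R_1(im)$ carries no decay in $m$ and the region split is essential; this is the substantive content of the quoted lemmas. Given the author's explicit remark that the proposition is stated without proof and rests on those earlier works, I would present only this brief sketch and refer the reader to~\cite{lama19,costintanveer} for the technical estimate on $\mathcal{I}$.
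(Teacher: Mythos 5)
Your proposal is correct and follows the same route as the paper, which states Proposition~\ref{prop235} without proof and rests it precisely on Proposition 2 of~\cite{lama19} and Lemma 2.2 of~\cite{costintanveer}. Your reduction — cancelling the $\tau$-weight and the exponential factors via $|m|\le|m-m_1|+|m_1|$, then bounding $\mathcal{I}(m)$ uniformly by the standard $|m_1|\lessgtr|m|/2$ splitting using $|R_1(im)|\gtrsim(1+|m|)^{\deg R_1}$, $|R_2(im_1)|\lesssim(1+|m_1|)^{\deg R_2}$ and $\mu>\deg R_2+1$ — is exactly the argument those references supply.
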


\section{Statement of the main problem and auxiliary equations}\label{sec2}

Let $\beta>0$ and $k'_1,k_1,k_2\ge 1$ with $k'_1>k_1$, and $D_1,D_2\ge 2$ be integer numbers. We also consider a real number $q>1$, and choose real numbers $\Delta_{D_1D_2},d_{D_1},\tilde{\delta}_{D_2}\ge0$, and $\Delta_{\ell_1\ell_2},\delta_{\ell_1},d_{\ell_1},\tilde{\delta}_{\ell_2}\ge0$ for every $1\le \ell_1\le D_1-1$ and $1\le \ell_2\le D_2-1$. We assume that
\begin{equation}\label{e93}
\tilde{\delta}_{\ell_2}\le \left(\frac{k'_1}{k_1}-1\right)d_{\ell_1}+d_{D_1}+\tilde{\delta}_{D_2}-k'_1\delta_{\ell_1},\quad 1\le \ell_1\le D_1-1,1\le \ell_2\le D_2-1.
\end{equation}
In addition to that, there exist natural numbers $\lambda_1,\lambda_2$ with
\begin{equation}\label{e224}
\Delta_{D_1D_2}=\lambda_1 d_{D_1}+\lambda_2 k_2\tilde{\delta}_{D_2},
\end{equation}
\begin{equation}\label{e226}
k_1\delta_{\ell_1}<d_{\ell_1},\quad  \lambda_1d_{\ell_1}+\lambda_2k_2\tilde{\delta}_{\ell_2}< \Delta_{\ell_1\ell_2},\quad 1\le \ell_1\le D_1-1,1\le \ell_2\le D_2-1.
\end{equation}

Let us also fix polynomials $Q,R_{D_1D_2}$, and $R_{\ell_1\ell_2}$ for all $1\le \ell_1\le D_1-1$ and $1\le \ell_2\le D_2-1$, with complex coefficients such that
\begin{equation}\label{e97}
\frac{Q(im)}{R_{D_1D_2}(im)}\in S_{Q,R_{D_1D_2}},\quad m\in\R,
\end{equation}
where 
$$S_{Q,R_{D_1D_2}}:=\left\{z\in\C:|z|\ge r_{Q,R_{D_1D_2}} ,|\arg(z)-d_{Q,R_{D_1D_2}}|\le \eta_{Q,R_{D_1D_2}}  \right\},$$
for some $r_{Q,R_{D_1D_2}},\eta_{Q,R_{D_1D_2}}>0$ and $d_{Q,R_{D_1D_2}}\in\R$. Observe this condition implies $\hbox{deg}(R_{D_1D_2})\le \hbox{deg}(Q)$. These polynomials are chosen in such a way that $R_{D_1D_2}(im)\neq0$ for $m\in\R$, and
\begin{equation}\label{e112}
\hbox{deg}(R_{D_1D_2})\ge \deg(R_{\ell_1\ell_2}),\quad 1\le \ell_1\le D_1-1,1\le \ell_2\le D_2-1.
\end{equation}

Let $\mu>1$ such that
\begin{equation}\label{e270}
\mu>\hbox{deg}(R_{\ell_1\ell_2})+1,\quad 1\le \ell_1\le D_1-1,1\le \ell_2\le D_2-1.
\end{equation}

The main problem under study in this work is 
\begin{multline}\label{e1}
Q(\partial_z)u(\bt,z,\epsilon)=\epsilon^{\Delta_{D_1D_2}}(t_2^{k_2+1}\partial_{t_2})^{\tilde{\delta}_{D_2}}t_1^{d_{D_1}}\sigma_{q;t_1}^{\frac{d_{D_1}}{k_1}}R_{D_1D_2}(\partial_z)u(\bt,z,\epsilon)\\
+\sum_{\substack{1\le\ell_1\le D_1-1\\1\le\ell_2\le D_2-1}}\epsilon^{\Delta_{\ell_1\ell_2}}t_1^{d_{\ell_1}}\sigma_{q;t_1}^{\delta_{\ell_1}}\sigma_{q;t_2}^{\frac{1}{k_2}\left(\delta_{\ell_1}-\frac{d_{\ell_1}}{k_1}\right)}(t_2^{k_2+1}\partial_{t_2})^{\tilde{\delta}_{\ell_2}}c_{\ell_1\ell_2}(z,\epsilon)R_{\ell_1\ell_2}(\partial_z)u(\bt,z,\epsilon)+f(\bt,z,\epsilon),
\end{multline}
under initial data $u(t_1,0,z,\epsilon)\equiv u(0,t_2,z,\epsilon)\equiv 0$.

The function $f$ is a holomorphic function in $\C^{\star}\times\C^{\star}\times H_{\beta'}\times (D(0,\epsilon_0)\setminus\{0\})$ for every $0<\beta'<\beta$, where $H_{\beta'}$ stands for the horizontal strip $H_{\beta'}=\{z\in\C:|\hbox{Im}(z)|< \beta'\}$. It is constructed as follows. Let $\psi(\tau,m,\epsilon)$ be a continuous function, continuous in $\C\times\R\times D(0,\epsilon_0)$, for some $\epsilon_0>0$, entire with respect to the first variable, and holomorphic with respect to the third one in $D(0,\epsilon_0)$. We moreover assume there exists $C_\psi>0$ such that
\begin{equation}\label{epsi}
|\psi(\tau,m,\epsilon)|\le C_\psi (1+|m|)^{-\mu}e^{-\beta|m|}\exp\left(\frac{k'_1}{2\log(q)}\log^2(|\tau|+\delta)+\alpha\log(|\tau|+\delta)\right),
\end{equation} 
for every $(\tau,m,\epsilon)\in\C\times\R\times D(0,\epsilon_0)$, some $\alpha\in\R$, $\delta>0$; and where $\mu$ satisfies (\ref{e270}).

In view of the definition of $q-$Laplace transform and the results described in Section~\ref{sec32a}, one can define
$$F(\bT,m,\epsilon)=\frac{1}{\pi_q^{1/k_1}}\int_{L_\gamma}\psi(u,m,\epsilon)\frac{1}{\Theta_{q^{1/k_1}}\left(\frac{u}{T_1}\right)}e^{-\left(\frac{1}{T_2}\right)^{k_2}u}\frac{du}{u},$$
for some fixed $\gamma\in\R$. Here, $\bT:=(T_1,T_2)$. The function 
\begin{equation}\label{e243}
f( \bt,z,\epsilon)=\mathcal{F}^{-1}\left(m\mapsto F(\epsilon^{\lambda_1}t_1,\epsilon^{\lambda_2}t_2,m,\epsilon)\right)(z),
\end{equation}
turns out to be holomorphic on the set $\C^\star\times\C^{\star}\times H_{\beta'}\times (D(0,\epsilon_0)\setminus\{0\})$. Observe that in the previous construction, given $T_1,T_2\in\C^{\star}$, one can choose $\gamma\in\R$ with $\cos(\gamma-k_2\hbox{arg}(T_2))>0$ and $|1+(uq^{m/k})/T_1|>0$ for all $m\in\Z$ and with $u\in L_\gamma$. 

For every $0\le \ell_1\le D_1-1$ and $0\le \ell_2\le D_2-1$, the function $c_{\ell_1\ell_2}(z,\epsilon)$ is constructed in the following way:
$$c_{\ell_1\ell_2}(z,\epsilon)=\frac{1}{(2\pi)^{1/2}}\int_{-\infty}^{\infty}C_{\ell_1\ell_2}(m,\epsilon)e^{izm}dm,\quad (z,\epsilon)\in H_{\beta}\times D(0,\epsilon_0).$$
$c_{\ell_1\ell_2}(z,\epsilon)$ turns out to be a holomorphic function on $H_\beta\times D(0,\epsilon_0)$ whenever $m\mapsto C_{\ell_1\ell_2}(m,\epsilon)\in E_{(\beta,\mu)}$. In addition to that, we assume that uniform bounds with respect to the perturbation parameter are satisfied, i.e. there exist $\mathcal{C}_{\ell_1\ell_2}>0$ with
\begin{equation}\label{e284}
\sup_{\epsilon\in D(0,\epsilon_0)}\left\|C_{\ell_1\ell_2}(m,\epsilon)\right\|_{(\beta,\mu)}\le\mathcal{C}_{\ell_1\ell_2}.
\end{equation}

\subsection{Study of auxiliary equations}\label{sec31}

In this section we preserve the statements and constructions concerning the main problem under study (\ref{e1}), and the geometric and algebraic conditions held on the elements involved in the main equation.

We search for solutions of (\ref{e1}) of the form 
\begin{equation}\label{e225}
u(\bt,z,\epsilon)=\mathcal{F}^{-1}(m\mapsto U(\epsilon^{\lambda_1}t_1,\epsilon^{\lambda_2}t_2,m,\epsilon))(z).
\end{equation}
Assuming the solution is of the form (\ref{e225}), the expression $U(T_1,T_2,m,\epsilon)$ solves 
\begin{multline}\label{e2}
Q(im)U(\bT,m,\epsilon)=T_1^{d_{D_1}}\sigma_{q;T_1}^{\frac{d_{D_1}}{k_1}}(T_2^{k_2+1}\partial_{T_2})^{\tilde{\delta}_{D_2}}R_{D_1D_2}(im)U(\bT,m,\epsilon)\\
+\sum_{\substack{1\le\ell_1\le D_1-1\\1\le\ell_2\le D_2-1}}\epsilon^{\Delta_{\ell_1\ell_2}-\lambda_1 d_{\ell_1}-\lambda_2 k_2\tilde{\delta}_{\ell_2}}T_1^{d_{\ell_1}}\sigma_{q;T_1}^{\delta_{\ell_1}}\sigma_{q;T_2}^{\frac{1}{k_2}(\delta_{\ell_1}-\frac{d_{\ell_1}}{k_1})}(T_2^{k_2+1}\partial_{T_2})^{\tilde{\delta}_{\ell_2}}\\
\times \frac{1}{(2\pi)^{1/2}}\int_{-\infty}^{\infty}C_{\ell_1\ell_2}(m-m_1,\epsilon)R_{\ell_1\ell_2}(im_1)U(\bT,m_1,\epsilon)dm_1+F(\bT,m,\epsilon).
\end{multline}

We reduce the study of solutions of (\ref{e1}) to those of (\ref{e2}), which are linked through (\ref{e225}). In order to solve (\ref{e2}), we adapt a recent approach developed in ~\cite{family3} to a new situation involving both partial differential and $q-$difference operators. We seek for solutions of (\ref{e2}) of the special form
\begin{equation}\label{e242}
U_{\gamma}(\bT,m,\epsilon)=\frac{1}{\pi_{q^{1/k_1}}}\int_{L_\gamma}\omega(u,m,\epsilon)\frac{1}{\Theta_{q^{1/k_1}}\left(\frac{u}{T_1}\right)}e^{-\left(\frac{1}{T_2}\right)^{k_2}u}\frac{du}{u},
\end{equation} 
for some appropriate function $\omega(\tau,m,\epsilon)$ and $\gamma\in\R$. We refer to Section~\ref{sec32a} for the definitions of the elements involved in the previous expression. 

Let us consider a second auxiliary equation: 
\begin{multline}\label{e3}
Q(im)\omega(\tau,m,\epsilon)=\frac{k_2^{\tilde{\delta}_{D_2}}}{(q^{1/k_1})^{d_{D_1}(d_{D_1}-1)/2}}\tau^{d_{D_1}+\tilde{\delta}_{D_2}}R_{D_1D_2}(im)\omega(\tau,m,\epsilon)\\
+\sum_{\substack{1\le\ell_1\le D_1-1\\1\le\ell_2\le D_2-1}}\frac{\epsilon^{\Delta_{\ell_1\ell_2}-\lambda_1d_{\ell_1}-\lambda_2 k_2 \tilde{\delta}_{\ell_2}}k_2^{\tilde{\delta}_{\ell_2}}q^{(\delta_{\ell_1}-d_{\ell_1}/k_1)\tilde{\delta}_{\ell_2}}}{(q^{1/k_1})^{d_{\ell_1}(d_{\ell_1}-1)/2}}\tau^{\tilde{\delta}_{\ell_2}+d_{\ell_1}}\\
\times \frac{1}{(2\pi)^{1/2}}\int_{-\infty}^{\infty}C_{\ell_1\ell_2}(m-m_1,\epsilon)R_{\ell_1\ell_2}(im_1)\omega(q^{\delta_{\ell_1}-\frac{d_{\ell_1}}{k_1}}\tau,m_1,\epsilon)dm_1+\psi(\tau,m,\epsilon).
\end{multline}

We define the polynomial $P_m(\tau)$ by
\begin{equation}\label{e261}
P_m(\tau)=Q(im)-\frac{k_2^{\tilde{\delta}_{D_2}}}{(q^{1/k_1})^{d_{D_1}(d_{D_1}-1)/2}}\tau^{d_{D_1}+\tilde{\delta}_{D_2}}R_{D_1D_2}(im),
\end{equation}
whose factorization is given by
$$P_m(\tau)=-\frac{k_2^{\tilde{\delta}_{D_2}}R_{D_1D_2}(im)}{(q^{1/k_1})^{d_{D_1}(d_{D_1}-1)/2}}\prod_{\ell=0}^{d_{D_1}+\tilde{\delta}_{D_2}-1}(\tau-q_{\ell}(m)),$$
with
$$q_\ell(m)=\Delta_m\exp\left(\sqrt{-1}\left(\frac{1}{d_{D_1}+\tilde{\delta}_{D_2}}\arg\left(\frac{Q(im)(q^{1/k_1})^{d_{D_1}(d_{D_1}-1)/2}}{k_2^{\tilde{\delta}_{D_2}}R_{D_1D_2}(im)}\right)+\frac{2\pi\ell}{d_{D_1}+\tilde{\delta}_{D_2}}\right)\right),$$
for all $0\le \ell\le d_{D_1}+\tilde{\delta}_{D_2}-1$, where $\Delta_m=\left(\frac{|Q(im)|}{k_2^{\tilde{\delta}_{D_2}}|R_{D_1D_2}(im)|}(q^{1/k_1})^{\frac{d_{D_1}(d_{D_1}-1)}{2}}\right)^{1/(d_{D_1}+\tilde{\delta}_{D_2})}$.

Let $d\in\R$ be such that the infinite sector $S_d$ of bisecting direction $d$ satisfies the following geometric construction:\label{geosd}
there exists $M_1>0$ such that $|\tau-q_\ell(m)|\ge M_1(1+|\tau|)$, for all $0\le \ell\le d_{D_1}+\tilde{\delta}_{D_2}-1$, $m\in\R$ and $\tau\in S_d\cup\overline{D}(0,\rho)$. The previous is a feasible condition for an appropriate choice of small enough $\eta_{Q,R_{D_1D_2}}>0$ and large enough $r_{Q,R_{D_1D_2}}>0$, in view of (\ref{e97}) and the definition of $q_\ell(m)$. More precisely, one chooses $S_d$ such that $q_\ell(m)/\tau$ has positive distance to 1, for all $0\le \ell\le d_{D_1}+\tilde{\delta}_{D_2}-1$, $m\in\R$, and $\tau\in S_d\cup\overline{D}(0,\rho)$.
The previous choice of $d$ yields
\begin{equation}
|P_m(\tau)|\ge C_{P}|R_{D_1D_2}(im)|(1+|\tau|)^{d_{D_1}+\tilde{\delta}_{D_2}},\label{e276}
\end{equation}
for some $C_P>0$, valid for all $\tau\in S_d\cup \overline{D}(0,\rho)$ and all $m\in\R$.

\begin{prop}\label{prop5}
Let $S_d$ be an infinite sector with vertex at the origin satisfying the previous geometric conditions. Then, if $C_\psi>0$ (see (\ref{epsi}) for its definition) and $\epsilon_0>0$ are small enough, there exists $\varpi>0$ such that the equation (\ref{e3}) admits a unique solution $\omega^d(\tau,m,\epsilon)$ which belongs to $qExp_{(k'_1,\beta,\mu,\alpha)}^{d}$ with $\left\|\omega^d(\tau,m,\epsilon)\right\|_{(k'_1,\beta,\mu,\alpha)}\le\varpi$, for all $\epsilon\in D(0,\epsilon_0)$.
\end{prop}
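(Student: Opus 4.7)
The plan is to recast equation (\ref{e3}) as a fixed-point problem in the Banach space $qExp^d_{(k'_1,\beta,\mu,\alpha)}$ and invoke Banach's contraction principle. Moving the dominant $R_{D_1D_2}$-term to the left-hand side and dividing by the polynomial $P_m(\tau)$ from (\ref{e261}), equation (\ref{e3}) becomes $\omega=\mathcal{H}_\epsilon(\omega)$, where $\mathcal{H}_\epsilon(\omega)=L_\epsilon\omega+b_\epsilon$ is affine in $\omega$ with $b_\epsilon(\tau,m,\epsilon)=\psi(\tau,m,\epsilon)/P_m(\tau)$ and
\begin{multline*}
L_\epsilon(\omega)(\tau,m,\epsilon)=\sum_{\substack{1\le\ell_1\le D_1-1\\ 1\le\ell_2\le D_2-1}}\kappa_{\ell_1\ell_2}\,\epsilon^{\Delta_{\ell_1\ell_2}-\lambda_1 d_{\ell_1}-\lambda_2 k_2\tilde{\delta}_{\ell_2}}\,\frac{\tau^{\tilde{\delta}_{\ell_2}+d_{\ell_1}}}{P_m(\tau)}\\
\times\frac{1}{(2\pi)^{1/2}}\int_{-\infty}^{\infty}C_{\ell_1\ell_2}(m-m_1,\epsilon)R_{\ell_1\ell_2}(im_1)\,\omega(q^{\delta_{\ell_1}-d_{\ell_1}/k_1}\tau,m_1,\epsilon)\,dm_1,
\end{multline*}
the $\kappa_{\ell_1\ell_2}$ being the numerical constants collected from (\ref{e3}). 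The target is to select $\epsilon_0$, $C_\psi$ and a radius $\varpi$ so that the operator norm $\|L_\epsilon\|\le 1/2$ and $\|b_\epsilon\|\le\varpi/2$; these two bounds make $\mathcal{H}_\epsilon$ a strict contraction of $\overline{B}(0,\varpi)\subset qExp^d_{(k'_1,\beta,\mu,\alpha)}$ into itself, and Banach's theorem then delivers the unique fixed point $\omega^d$.

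For the forcing term, estimate (\ref{epsi}) places $\psi$ in $qExp^d_{(k'_1,\beta,\mu,\alpha)}$ with norm bounded by a constant multiple of $C_\psi$; using the lower bound (\ref{e276}) together with the observation that $|R_{D_1D_2}(im)|$ is bounded below by a positive constant on $i\R$ (as a nonvanishing polynomial), $1/P_m(\tau)$ is a bounded $(\tau,m)$-multiplier, so Lemma \ref{lema1} yields $\|b_\epsilon\|\le C''\,C_\psi$, which is made $\le \varpi/2$ by choosing $C_\psi$ small. Each summand of $L_\epsilon$ is handled by combining Propositions \ref{prop1} and \ref{prop235}: Proposition \ref{prop235} takes care of the convolution with kernel $C_{\ell_1\ell_2}(\cdot,\epsilon)$ and weight $R_{\ell_1\ell_2}(im_1)/R_{D_1D_2}(im)$, which is admissible in view of (\ref{e112}), (\ref{e270}) and the uniform estimate (\ref{e284}). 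After rewriting $1/P_m(\tau)=R_{D_1D_2}(im)^{-1}\cdot\bigl(R_{D_1D_2}(im)/P_m(\tau)\bigr)$, the remaining composition of the $\tau$-shift $\sigma_{q;\tau}^{-(d_{\ell_1}/k_1-\delta_{\ell_1})}$ (whose exponent is positive by (\ref{e226})), the polynomial factor $\tau^{\tilde{\delta}_{\ell_2}+d_{\ell_1}}$ and the bounded factor $R_{D_1D_2}(im)/P_m(\tau)$ is absorbed into a mildly $m$-parametrised version of Proposition \ref{prop1} (the proof of which does not use that $a_{\gamma_1}$ is $m$-free, only that $(1+|\tau|)^{\gamma_1}|a_{\gamma_1}(\tau,m)|$ is uniformly bounded), applied with
$$\gamma_1=d_{D_1}+\tilde{\delta}_{D_2},\qquad \gamma_2=\tilde{\delta}_{\ell_2}+d_{\ell_1},\qquad \gamma_3=\frac{d_{\ell_1}}{k_1}-\delta_{\ell_1}.$$
A direct substitution then shows that the compatibility condition (\ref{e136}), namely $\gamma_2\le k'_1\gamma_3+\gamma_1$, reduces exactly to hypothesis (\ref{e93}). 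Finally, (\ref{e226}) gives $\Delta_{\ell_1\ell_2}-\lambda_1 d_{\ell_1}-\lambda_2 k_2\tilde{\delta}_{\ell_2}>0$, so the corresponding $\epsilon$-power can be forced below any prescribed threshold by taking $\epsilon_0$ small, yielding $\|L_\epsilon\|\le 1/2$.

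The main technical obstacle is the parameter bookkeeping that matches the algebraic data (\ref{e93}) and (\ref{e226}) with the hypotheses of Proposition \ref{prop1}; once the identification $(\gamma_1,\gamma_2,\gamma_3)$ above is fixed, both the ball-invariance $\mathcal{H}_\epsilon(\overline{B}(0,\varpi))\subset \overline{B}(0,\varpi)$ and the contraction estimate $\|\mathcal{H}_\epsilon(\omega_1)-\mathcal{H}_\epsilon(\omega_2)\|\le \tfrac{1}{2}\|\omega_1-\omega_2\|$ follow from the same pointwise computation applied to $\omega_1-\omega_2$. The Banach fixed point theorem then produces the unique $\omega^d\in\overline{B}(0,\varpi)\subset qExp^d_{(k'_1,\beta,\mu,\alpha)}$ solving (\ref{e3}) for every $\epsilon\in D(0,\epsilon_0)$.
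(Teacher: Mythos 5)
Your proposal is correct and follows essentially the same route as the paper: recast (\ref{e3}) as a fixed-point equation for the affine map $\mathcal{H}_\epsilon$, bound the forcing part via (\ref{epsi}), (\ref{e276}) and Lemma~\ref{lema1}, bound each convolution summand via Proposition~\ref{prop235} together with Proposition~\ref{prop1} (your identification $(\gamma_1,\gamma_2,\gamma_3)$ making (\ref{e136}) coincide with (\ref{e93}) is exactly the intended use, and your observation that the multiplier may depend on $m$ harmlessly matches the paper's implicit use of the uniform bound (\ref{e276})), then shrink $\epsilon_0$ and $C_\psi$ using the positivity from (\ref{e226}) to get ball invariance and a $1/2$-contraction, and conclude by Banach's fixed point theorem.
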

\begin{proof}
Let $\epsilon\in D(0,\epsilon_0)$. We consider the map $\mathcal{H}_\epsilon$ defined by
\begin{multline*}
\mathcal{H}_\epsilon(\omega(\tau,m)):=\frac{1}{P_m(\tau)}\left(\sum_{\substack{1\le\ell_1\le D_1-1\\1\le\ell_2\le D_2-1}}\frac{\epsilon^{\Delta_{\ell_1\ell_2}-\lambda_1d_{\ell_1}-\lambda_2 k_2 \tilde{\delta}_{\ell_2}}k_2^{\tilde{\delta}_{\ell_2}}q^{(\delta_{\ell_1}-d_{\ell_1}/k_1)\tilde{\delta}_{\ell_2}}}{(q^{1/k_1})^{d_{\ell_1}(d_{\ell_1}-1)/2}}\tau^{\tilde{\delta}_{\ell_2}+d_{\ell_1}}\right.\\
\left.\times \frac{1}{(2\pi)^{1/2}}\int_{-\infty}^{\infty}C_{\ell_1\ell_2}(m-m_1,\epsilon)R_{\ell_1\ell_2}(im_1)\omega(q^{\delta_{\ell_1}-\frac{d_{\ell_1}}{k_1}}\tau,m_1)dm_1\right)+\frac{1}{P_m(\tau)}\psi(\tau,m,\epsilon).
\end{multline*}

Given $\varpi>0$, let $\omega(\tau,m)\in qExp_{(k'_1,\beta,\mu,\alpha)}^{d}$ with $\left\|\omega^d(\tau,m,\epsilon)\right\|_{(k'_1,\beta,\mu,\alpha)}\le\varpi$. In view of (\ref{e93}), (\ref{e224}), (\ref{e226}), (\ref{e276}) and from Lemma~\ref{lema1}, Proposition~\ref{prop1} and Proposition~\ref{prop235} one has 
\begin{multline}
\left\|\frac{\epsilon^{\Delta_{\ell_1\ell_2}-\lambda_1d_{\ell_1}-\lambda_2 k_2 \tilde{\delta}_{\ell_2}}k_2^{\tilde{\delta}_{\ell_2}}q^{(\delta_{\ell_1}-d_{\ell_1}/k_1)\tilde{\delta}_{\ell_2}}}{(2\pi)^{1/2}(q^{1/k_1})^{d_{\ell_1}(d_{\ell_1}-1)/2}}\frac{\tau^{\tilde{\delta}_{\ell_2}+d_{\ell_1}}}{P_m(\tau)}\right.\\
\times\left.\int_{-\infty}^{\infty}C_{\ell_1\ell_2}(m-m_1,\epsilon)R_{\ell_1\ell_2}(im_1)\omega(q^{\delta_{\ell_1}-\frac{d_{\ell_1}}{k_1}}\tau,m_1)dm_1\right\|_{(k'_1,\beta,\mu,\alpha)}\\
\le \frac{\epsilon_0^{\Delta_{\ell_1\ell_2}-\lambda_1d_{\ell_1}-\lambda_2 k_2 \tilde{\delta}_{\ell_2}}k_2^{\tilde{\delta}_{\ell_2}}q^{(\delta_{\ell_1}-d_{\ell_1}/k_1)\tilde{\delta}_{\ell_2}}}{(2\pi)^{1/2}(q^{1/k_1})^{d_{\ell_1}(d_{\ell_1}-1)/2}C_P}C_2C'_2\mathcal{C}_{\ell_1\ell_2}\left\|\omega(\tau,m,\epsilon)\right\|_{(k'_1,\beta,\mu,\alpha)}\label{e304}
\end{multline}
for every $0\le \ell_1\le D_1-1$ and $0\le \ell_2\le D_2-1$. In addition to this, Lemma~\ref{lema1} and (\ref{e276}) yield
\begin{align}
\left\|\frac{1}{P_m(\tau)}\psi(\tau,m,\epsilon)\right\|_{(k'_1,\beta,\mu,\alpha)}&\le\frac{1}{C_P}\sup_{m\in\R}\frac{1}{|R_{D_1D_2}(im)|}\left\|\psi(\tau,m,\epsilon)\right\|_{(k'_1,\beta,\mu,\alpha)}\nonumber\\
&\le\frac{1}{C_P}\sup_{m\in\R}\frac{1}{|R_{D_1D_2}(im)|}C_{\psi}\label{e309}
\end{align}

Let $\epsilon_0, C_{\psi}>0$ be small enough satisfying
\begin{multline*}
\sum_{\substack{1\le\ell_1\le D_1-1\\1\le\ell_2\le D_2-1}}\frac{\epsilon_0^{\Delta_{\ell_1\ell_2}-\lambda_1d_{\ell_1}-\lambda_2 k_2 \tilde{\delta}_{\ell_2}}k_2^{\tilde{\delta}_{\ell_2}}q^{(\delta_{\ell_1}-d_{\ell_1}/k_1)\tilde{\delta}_{\ell_2}}}{(2\pi)^{1/2}(q^{1/k_1})^{d_{\ell_1}(d_{\ell_1}-1)/2}C_P}C_2C'_2\mathcal{C}_{\ell_1\ell_2}\varpi\\
+\frac{1}{C_P}\sup_{m\in\R}\frac{1}{|R_{D_1D_2}(im)|}C_{\psi}\le\varpi.
\end{multline*}
Then, the estimates (\ref{e304}) and (\ref{e309}) yield to $\left\|\mathcal{H}(\omega(\tau,m)\right\|_{(k'_1,\beta,\mu,\alpha)}\le\varpi$. In other words, the operator $\mathcal{H}_\epsilon$ restricted to $\overline{D}(0,\varpi)\subseteq qExp_{(k'_1,\beta,\mu,\alpha)}^{d}$ is such that $\mathcal{H}_\epsilon(\overline{D}(0,\varpi))\subseteq \overline{D}(0,\varpi)$.

On the other hand, let $\omega_1(\tau,m),\omega_2(\tau,m)\in \overline{D}(0,\rho)\subseteq qExp_{(k'_1,\beta,\mu,\alpha)}^{d}$. Analogously to (\ref{e304}), one arrives at
$$\left\|\mathcal{H}_\epsilon(\omega_1(\tau,m))-\mathcal{H}_\epsilon(\omega_2(\tau,m))\right\|_{(k'_1,\beta,\mu,\alpha)}\le \frac{1}{2}\left\|\omega_1(\tau,m)-\omega_2(\tau,m)\right\|_{(k'_1,\beta,\mu,\alpha)}$$
by choosing $\epsilon_0>0$ such that
$$\sum_{\substack{1\le\ell_1\le D_1-1\\1\le\ell_2\le D_2-1}}\frac{\epsilon_0^{\Delta_{\ell_1\ell_2}-\lambda_1d_{\ell_1}-\lambda_2 k_2 \tilde{\delta}_{\ell_2}}k_2^{\tilde{\delta}_{\ell_2}}q^{(\delta_{\ell_1}-d_{\ell_1}/k_1)\tilde{\delta}_{\ell_2}}}{(2\pi)^{1/2}(q^{1/k_1})^{d_{\ell_1}(d_{\ell_1}-1)/2}C_P}C_2C'_2\mathcal{C}_{\ell_1\ell_2}\le\frac{1}{2}.$$
We conclude that the map $\mathcal{H}_\epsilon:\overline{D}(0,\varpi)\subseteq qExp_{(k'_1,\beta,\mu,\alpha)}^{d}\to qExp_{(k'_1,\beta,\mu,\alpha)}^{d}$ is contractive. The classical fixed point theory in complete metric spaces states the existence of a unique fixed point for $\mathcal{H}_\epsilon$, say $\omega^d(\tau,m,\epsilon)$, in $qExp_{(k'_1,\beta,\mu,\alpha)}^{d}$, with $\left\|\omega^d(\tau,m,\epsilon)\right\|_{(k'_1,\beta,\mu,\alpha)}\le \varpi$. For every $\epsilon\in D(0,\epsilon_0)$, the function $\omega^d(\tau,m,\epsilon)$ is a solution of (\ref{e3}) in view of the definition of the operator $\mathcal{H}_\epsilon$. Holomorphy of the map $D(0,\epsilon_0)\ni\epsilon\mapsto\omega^d(\tau,m,\epsilon)$ is derived from the construction of the fixed point.
\end{proof}

In order to prove that the solutions of (\ref{e2}) and (\ref{e3}) are related via (\ref{e242}), we need to clarify how operators involved in (\ref{e2}) are transformed into the corresponding ones in (\ref{e3}). This is left to the end of the work, in Section~\ref{secanexo} not to interfere our line of reasoning.

\section{Domains of existence for the solutions of (\ref{e2}), and associated estimates}\label{sec4}

In this section, we describe appropriate domains on the time variables in which the solution of the main problem under consideration is well defined, within appropriate \textit{geometric conditions}. Let $\tilde{\mathcal{T}}_1$ be a bounded sector with vertex at the origin such that there exists $\delta_1>0$ with
$$\left|1+\frac{re^{\gamma\sqrt{-1}}}{T_1}\right|\ge \delta_1,$$
for all $r\ge0$ and all $T_1\in\tilde{\mathcal{T}}_1$, and $\gamma$ being an argument in $S_d$. We also fix an unbounded sector $\tilde{\mathcal{T}}_2$, with vertex at the origin, such that
$$\gamma-k_2\hbox{arg}(T_2)\in\left(-\frac{\pi}{2}+\delta_2,\frac{\pi}{2}-\delta_2\right),$$
for some $\delta_2>0$ and well chosen $\gamma\in \hbox{arg}(S_d)$. Observe that, in particular, there exists $\delta_3>0$ with $\cos(\gamma-k_2\hbox{arg}(T_2))>\delta_3$.

The next technical result describes accurate bounds for the solutions of (\ref{e2}) in different domains. The proof is left to Section~\ref{secanexo2}.

\begin{prop}\label{lema361}
Let $U_\gamma(\bT,m,\epsilon)$ be defined in (\ref{e242}), with $\omega(\tau,m,\epsilon)=\omega^d(\tau,m,\epsilon)$ being the function obtained in Proposition~\ref{prop5}. The following statements hold:
\begin{enumerate}
\item There exist small enough $\rho_1>0$ and large enough $\rho_2^\infty>0$ such that 
\begin{multline}\label{e365}
|U_{\gamma}(\bT,m,\epsilon)|\le \tilde{C}_1(1+|m|)^{-\mu}e^{-\beta|m|}\left(1+|T_1|^{1/2}\exp\left(-\frac{k_1}{2\log(q)}\log^2\left(\frac{\rho}{|T_1|}\right)\right)\frac{|T_2|^{k_2}}{\delta_3}\right.\\
\left.e^{\frac{k'_1}{2\log(q)}\log^2(\frac{|T_2|^{k_2}}{\delta_3})+\alpha\log(\frac{|T_2|^{k_2}}{\delta_3})}\left[1+\left(\log(\frac{|T_2|^{k_2}}{\delta_3})\right)^{1/2}e^{\frac{k'_1}{\log(q)}\log^2\left(\frac{|T_2|^{k_2}}{\delta_3}\right)\log\left(\frac{k'_1}{\log(q)}\log\left(\frac{|T_2|^{k_2}}{\delta_3}\right)\right)}\right]\right),
\end{multline}
for every $T_1\in\tilde{\mathcal{T}}_1$ with $|T_1|<\rho_1$ and $T_2\in\tilde{\mathcal{T}}_2$ with $|T_2|>\rho^\infty_2$, and some $\tilde{C}_1>0$.
\item There exist small enough $\rho_1,\rho_2>0$ such that
\begin{multline}\label{e366}
|U_{\gamma}(\bT,m,\epsilon)|\le \tilde{C}_2(1+|m|)^{-\mu}e^{-\beta|m|}\left(1+|T_1|^{1/2}\exp\left(-\frac{k_1}{2\log(q)}\log^2\left(\frac{\rho}{|T_1|}\right)\right)\right.\\
\left.\times\exp\left(-\frac{\rho\delta_3}{2|T_2|^{k_2}}\right)\right),
\end{multline}
for every $T_1\in\tilde{\mathcal{T}}_1$ with $|T_1|<\rho_1$ and $T_2\in\tilde{\mathcal{T}}_2$ with $|T_2|<\rho_2$, and some $\tilde{C}_2>0$.
\end{enumerate}
\end{prop}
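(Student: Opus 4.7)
The plan is to parametrize the ray $L_\gamma$ by $u=re^{\sqrt{-1}\gamma}$ and apply the three available pointwise bounds to the integrand in (\ref{e242}). Namely, Proposition~\ref{prop5} together with Definition~\ref{defi11} gives
\[
|\omega^{d}(re^{\sqrt{-1}\gamma},m,\epsilon)|\le \varpi(1+|m|)^{-\mu}e^{-\beta|m|}\exp\!\left(\frac{k'_{1}}{2\log q}\log^{2}(r+\delta)+\alpha\log(r+\delta)\right);
\]
the geometric assumption $|1+re^{\sqrt{-1}\gamma}/T_{1}|\ge\delta_{1}$ on $\tilde{\mathcal{T}}_1$ combined with (\ref{e190}) produces
\[
\frac{1}{|\Theta_{q^{1/k_{1}}}(re^{\sqrt{-1}\gamma}/T_{1})|}\le \frac{1}{C_{q,k_{1}}\delta_{1}}\Bigl(\frac{r}{|T_{1}|}\Bigr)^{-1/2}\exp\!\left(-\frac{k_{1}}{2\log q}\log^{2}\frac{r}{|T_{1}|}\right);
\]
and $\cos(\gamma-k_{2}\arg T_{2})\ge\delta_{3}$ on $\tilde{\mathcal{T}}_2$ forces $|e^{-u/T_{2}^{k_{2}}}|\le e^{-r\delta_{3}/|T_{2}|^{k_{2}}}$. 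With $du/u=dr/r$, $|U_{\gamma}(\bT,m,\epsilon)|$ is reduced to the $m$-prefactor $(1+|m|)^{-\mu}e^{-\beta|m|}$ times a single scalar integral in $r$ of the product of the three preceding bounds.

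I then split this integral as $\int_{0}^{\rho}+\int_{\rho}^{\infty}$. On $(0,\rho]$, $\log(r+\delta)$ is bounded, $e^{-r\delta_{3}/|T_{2}|^{k_{2}}}\le 1$, and after the substitution $t=\log(r/|T_{1}|)$ the residual $\Theta$-integral becomes a convergent one-sided Gaussian whose value is bounded independently of $T_1$; this is the source of the leading ``$1$'' in both (\ref{e365}) and (\ref{e366}). On $[\rho,\infty)$, the auxiliary map $s\mapsto s^{-1/2}\exp(-\frac{k_{1}}{2\log q}\log^{2}s)$ is easily shown to attain its global maximum at $s=q^{-1/(2k_{1})}$ and to be strictly decreasing beyond it. For $\rho_{1}$ small enough one then has $\rho/|T_{1}|>q^{-1/(2k_{1})}$ uniformly for $|T_{1}|<\rho_{1}$, so the $\Theta_{q^{1/k_{1}}}$-factor on the tail is dominated by its value at $r=\rho$, pulling out exactly $|T_{1}|^{1/2}\exp(-\frac{k_{1}}{2\log q}\log^{2}(\rho/|T_{1}|))$ as in (\ref{e365}) and (\ref{e366}). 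In both cases it remains to estimate
\[
J(T_{2}):=\int_{\rho}^{\infty}\exp\!\Bigl(\frac{k'_{1}}{2\log q}\log^{2}(r+\delta)+\alpha\log(r+\delta)-\frac{r\delta_{3}}{|T_{2}|^{k_{2}}}\Bigr)\,\frac{dr}{r}.
\]

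For statement~(2), $|T_{2}|<\rho_{2}$, the linear decay $r\delta_{3}/|T_{2}|^{k_{2}}$ overwhelms the $\log^{2}$ growth as soon as $\rho_{2}$ is taken small. Splitting $r\delta_{3}/|T_{2}|^{k_{2}}$ into two equal halves, one half absorbs the polynomial-times-$\exp(\log^{2})$ factor via the elementary inequality $\frac{k'_{1}}{2\log q}\log^{2}(r+\delta)+\alpha\log(r+\delta)\le \frac{r\delta_{3}}{2|T_{2}|^{k_{2}}}+C$, valid uniformly for $r\ge\rho$ and $|T_{2}|$ small, while the other half provides the decay $e^{-\rho\delta_{3}/(2|T_{2}|^{k_{2}})}$ after integration against $dr/r$. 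This yields (\ref{e366}).

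For statement~(1), $|T_{2}|>\rho_{2}^{\infty}$, the decay rate $\delta_{3}/|T_{2}|^{k_{2}}$ is small so the integrand of $J(T_{2})$ first grows and then decays, and a genuine Laplace-type saddle-point analysis is required. Setting $g(r)=\frac{k'_{1}}{2\log q}\log^{2}(r+\delta)+\alpha\log(r+\delta)-r\delta_{3}/|T_{2}|^{k_{2}}$, after $v=r+\delta$ the critical-point equation $g'(r)=0$ rearranges into the form $we^{w}=\text{const}$, whose relevant root is furnished by the $-1$ branch of the Lambert $W$ function. Invoking the sharp asymptotics of Lemma~\ref{lema713}, the maximizer $r^{\star}$ satisfies $\log r^{\star}=\log(|T_{2}|^{k_{2}}/\delta_{3})+O(\log\log(|T_{2}|^{k_{2}}/\delta_{3}))$ and
\[
g(r^{\star})\le \frac{k'_{1}}{2\log q}\log^{2}(|T_{2}|^{k_{2}}/\delta_{3})+\alpha\log(|T_{2}|^{k_{2}}/\delta_{3})+\text{(double-log remainder)}.
\]
Bounding $J(T_{2})$ by its integrand at $r^{\star}$ times the effective width of the peak then produces the prefactor $|T_{2}|^{k_{2}}/\delta_{3}$ and the bracketed double-exponential correction of (\ref{e365}), the latter originating from the second-order Taylor expansion of $g$ at $r^{\star}$, where $|g''(r^{\star})|$ depends on $|T_{2}|$ in a subtle logarithmic way. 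The main obstacle is precisely this case~(1): the saddle point is implicit through a transcendental equation whose precise control requires the sharp $W_{-1}$ asymptotics of Lemma~\ref{lema713}, and identifying the correct width at $r^{\star}$ is what forces the rather intricate bracketed factor in (\ref{e365}).
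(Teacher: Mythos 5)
Your reduction to a scalar integral, the split at $r=\rho$, the treatment of the piece on $(0,\rho]$ (the source of the ``$1$''), the extraction of $|T_1|^{1/2}\exp(-\frac{k_1}{2\log q}\log^2(\rho/|T_1|))$ from the $\Theta$-factor by monotonicity on the tail, and your whole argument for statement (2) (splitting $r\delta_3/|T_2|^{k_2}$ into two halves, one absorbing the $\exp(\log^2)$ growth, the other yielding $e^{-\rho\delta_3/(2|T_2|^{k_2})}$) coincide with the paper's proof in Section~\ref{secanexo2}. The divergence, and the problem, is statement (1). The paper does \emph{not} use a saddle-point/Lambert-$W$ argument there: it sets $x=|T_2|^{k_2}/\delta_3$, changes variable $r=x\tilde r$, expands $\log^2(x\tilde r+\delta)=\log^2 x+2\log x\log(\tilde r+\delta/x)+\log^2(\tilde r+\delta/x)$, splits at $\tilde r=1-\delta/x$, and on the tail rewrites the cross term as the power $(\tilde r+\delta/x)^{k'_1\log x/\log q}$, so the integral is bounded by $\Gamma\bigl(\frac{k'_1\log x}{\log q}+1\bigr)$ up to elementary factors; Stirling's formula then produces exactly the bracketed factor $(\log x)^{1/2}\exp\bigl(\frac{k'_1}{\log q}\log x\,\log(\frac{k'_1}{\log q}\log x)\bigr)$ of (\ref{e419}). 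The Lambert $W_{-1}$ function enters the paper only later, in Lemma~\ref{lema713}, to maximize a completely different and explicit function of $\epsilon$ when proving Theorem~\ref{teopral1}; it plays no role in Proposition~\ref{lema361}.

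Your saddle-point route for (1) could in principle be made to work (the critical point of your $g$ is indeed expressible through $W_{-1}$, and its value reproduces the exponent with the $\log x\,\log\log x$ cross term), but as written it has concrete gaps. First, you cannot invoke Lemma~\ref{lema713} for ``sharp asymptotics'' of $r^\star$: that lemma bounds a specific function $\Psi$ and says nothing about the maximizer of your $g$; you would have to rerun the $W_{-1}$ bounds (Chatzigeorgiou) for your own transcendental equation. Second, ``integrand at $r^\star$ times the effective width'' is not an upper bound without further work; you need, e.g., concavity of $g$ for large $r$ together with explicit tail estimates, none of which is indicated. Third, the bookkeeping is off: the dominant bracketed factor $\exp\bigl(\frac{k'_1}{\log q}\log x\,\log\log x\bigr)$ comes from the \emph{value} $g(r^\star)$, through $\log r^\star\approx\log x+\log\log x$ entering $\frac{k'_1}{2\log q}\log^2(\cdot)$, not from the second-order Taylor term $g''(r^\star)$, which only contributes the algebraic width $\sim x(\log x)^{1/2}$. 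So for part (1) either adopt the paper's Gamma-function computation, which is shorter and fully elementary, or supply the missing concavity/tail analysis and redo the $W_{-1}$ estimates for your specific critical-point equation.
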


\section{Analytic solutions of the main problem: inner and outer solutions}\label{sec5}

In this section, we preserve the values of the elements involved in the main problem (\ref{e1}) stated in Section~\ref{sec2}. More precisely, we assume (\ref{e93})-(\ref{e270}), and also the hypotheses on the forcing term (\ref{e243}) in (\ref{epsi}) and the coefficients in (\ref{e284}). Let $d\in\R$ and $S_d$ an infinite sector with vertex at $0\in\C$ under the geometric condition imposed in Proposition~\ref{prop5}. Our main aim is to construct analytic solutions of (\ref{e1}) and their asymptotic behavior in different domains. For this purpose, we consider the analytic solutions as stated in Section~\ref{sec31}.

Such solutions are defined in families of sectors with respect to the perturbation parameter, conforming good coverings of $\C^\star$ (see Definition~\ref{goodcovering}). We also provide information about the difference of two solutions in consecutive sectors of the good covering, which will be crucial to determine the asymptotic behavior of the analytic solutions. We refer to consecutive solutions to solutions which are associated to consecutive elements in a fixed good covering of $\C^\star$. Let us first recall the notion of good covering in $\C^\star$.

\begin{defin}\label{goodcovering}
Let $\iota\ge2$ be an integer. For every $0\le h\le \iota-1$, we choose a finite sector with vertex at the origin $\mathcal{E}_h$ such that:
\begin{itemize}
\item $\mathcal{E}_h\subseteq D(0,\epsilon_0)$, and $\mathcal{E}_{j_1}\cap\mathcal{E}_{j_2}=\emptyset$ if and only if $0\le j_1,j_2\le \iota-1$ with  $|j_1-j_2|\ge2$ (under the convention that $\mathcal{E}_\iota:=\mathcal{E}_0$).
\item $\cup_{h=0}^{\iota-1}\mathcal{E}_h=\mathcal{U}\setminus\{0\}$, for some neighborhood of the origin $\mathcal{U}$. 
\end{itemize}
A family of sectors $(\mathcal{E}_h)_{0\le h\le \iota-1}$ under these assumptions is known as a good covering in $\C^\star$.
\end{defin}

Let $\tilde{\mathcal{T}}_1$ and $\tilde{\mathcal{T}}_2$ be sectors following the construction in Section~\ref{sec4}.

\begin{defin}
Let $(\mathcal{E}_h)_{0\le h\le \iota-1}$ be a good covering in $\C^\star$. We also fix a bounded sector, $\mathcal{T}_1$, and an unbounded sector $\mathcal{T}_2$ , both with vertex at the origin. For all $0\le h\le \iota-1$, let $S_{d_h}$ be an infinite sector of bisecting direction $d_h\in\R$. We say that the set $\{\mathcal{T}_1,\mathcal{T}_2,(\mathcal{E}_h)_{0\le h\le \iota-1},(S_{d_h})_{0\le h\le \iota-1}\}$ is admissible if the following conditions hold:
\begin{itemize}
\item For every $0\le h\le \iota-1$, $\epsilon\in\mathcal{E}_h$ and $t_1\in\mathcal{T}_1$ we have $\epsilon^{\lambda_1}t_1\in\tilde{\mathcal{T}}_1$.
\item For every $0\le h\le \iota-1$, $\epsilon\in\mathcal{E}_h$ and $t_2\in\mathcal{T}_2$ we have $\epsilon^{\lambda_2}t_2\in\tilde{\mathcal{T}}_2$.
\end{itemize}
\end{defin}

Observe that given an admissible set $\{\mathcal{T}_1,\mathcal{T}_2,(\mathcal{E}_h)_{0\le h\le \iota-1},(S_{d_h})_{0\le h\le \iota-1}\}$, the sectors $\tilde{\mathcal{T}}_1$, the choice of the sectors $\tilde{\mathcal{T}}_1$, $\tilde{\mathcal{T}}_2$ and $\gamma:=\gamma_d\in S_{d_h}$, fixed in Section~\ref{sec32a}, entail the existence of $\delta_1>0$ with 
$$\left|1+\frac{re^{\gamma_{d_h}\sqrt{-1}}}{\epsilon^{\lambda_1}t_1}\right|\ge \delta_1,\quad r\ge0, t_1\in\mathcal{T}_1,\epsilon\in\mathcal{E}_h,\quad 0\le h\le \iota-1.$$
In addition to this, it holds that
$$\gamma_{d_h}-k_2\hbox{arg}(\epsilon^{\lambda_2}t_2)\in\left(-\frac{\pi}{2}+\delta_2,\frac{\pi}{2}-\delta_2\right),\quad \epsilon\in\mathcal{E}_h,\quad 0\le h\le \iota-1,$$
for some $\delta_2>0$ and $\cos(\gamma_{d_h}-k_2\hbox{arg}(\epsilon^{\lambda_2}t_2))>\delta_3$, for some $\delta_3>0$, $\epsilon\in\mathcal{E}_h$, $0\le h\le \iota-1$.

\begin{defin}
Let $\{\mathcal{T}_1,\mathcal{T}_2,(\mathcal{E}_h)_{0\le h\le \iota-1},(S_{d_h})_{0\le h\le \iota-1}\}$ be an admissible set. 

Then, $\{(S_{d_h})_{0\le h\le \iota-1},\mathcal{T}_1\times\mathcal{T}_2\}$ is known as a family of sectors associated to the good covering $(\mathcal{E}_h)_{0\le h\le \iota-1}$.
\end{defin}

\begin{theo}\label{teo1}
Let $(\mathcal{E}_h)_{0\le h\le \iota-1}$ be a good covering in $\C^\star$. For every $0\le h\le \iota-1$ we choose $d_h\in\R$ such that $S_{d_h}$ satisfies the geometric conditions of Proposition~\ref{prop5}. Let $\{(S_{d_h})_{0\le h\le\iota-1},\mathcal{T}_1\times\mathcal{T}_2\}$ be a family of sectors associated to the good covering $(\mathcal{E}_h)_{0\le h\le \iota-1}$. If there exist small enough $\epsilon_0,C_{\psi}>0$ such that if 
$$\sup_{\epsilon\in D(0,\epsilon_0)}\left\|\psi(\tau,m,\epsilon)\right\|_{(k'_1,\beta,\mu,\alpha)}\le C_{\psi},$$
then for every $0\le h\le \iota-1$ the problem (\ref{e1}) admits a solution $u_{h}(\bt,z,\epsilon)$, which defines a bounded and holomorphic function in $\mathcal{T}_1\times\mathcal{T}_2\times H_{\beta'}\times \mathcal{E}_{h}$, for any fixed $0<\beta'<\beta$. 
\end{theo}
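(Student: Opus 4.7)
The plan is to realize $u_h$ through the chain (\ref{e225})--(\ref{e242}): first solve the auxiliary convolution equation (\ref{e3}) in the Borel-type space $qExp^{d_h}_{(k'_1,\beta,\mu,\alpha)}$, then lift this solution to the original time variables via the mixed $q$-Laplace/Laplace integral representation (\ref{e242}), and finally transfer it back to the spatial variable $z$ through the inverse Fourier transform. For each $0\le h\le \iota-1$, Proposition~\ref{prop5} applied to the direction $d_h$ produces a unique $\omega^{d_h}(\tau,m,\epsilon)\in qExp^{d_h}_{(k'_1,\beta,\mu,\alpha)}$ with $\|\omega^{d_h}(\cdot,\cdot,\epsilon)\|_{(k'_1,\beta,\mu,\alpha)}\le\varpi$ uniformly for $\epsilon\in D(0,\epsilon_0)$. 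I would then pick $\gamma_{d_h}\in\arg(S_{d_h})$ and define $U_{\gamma_{d_h}}(\bT,m,\epsilon)$ by the integral (\ref{e242}). Convergence of this integral along $L_{\gamma_{d_h}}$ has to be justified from three ingredients: the lower bound (\ref{e190}) on $|\Theta_{q^{1/k_1}}(u/T_1)|$ (supplying $q$-exponential decay of order $k_1$ in $|u|$), the at-most $q$-exponential growth of order $k'_1>k_1$ of $\omega^{d_h}$ provided by its norm, and the exponential factor $\exp(-(1/T_2)^{k_2}u)$ whose real part is at least $\delta_3 |u|/|T_2|^{k_2}$ thanks to the admissibility condition $\cos(\gamma_{d_h}-k_2\arg(T_2))>\delta_3$. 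Since $k'_1>k_1$, the $T_2$-exponential is precisely what overcomes the surplus growth rate of $\omega^{d_h}$ over the $q$-Theta kernel.

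Once $U_{\gamma_{d_h}}$ is well-defined, the admissibility of $\{\mathcal{T}_1,\mathcal{T}_2,(\mathcal{E}_h),(S_{d_h})\}$ ensures that, for every $(t_1,t_2,\epsilon)\in\mathcal{T}_1\times\mathcal{T}_2\times\mathcal{E}_h$, the pair $(\epsilon^{\lambda_1}t_1,\epsilon^{\lambda_2}t_2)$ falls in $\tilde{\mathcal{T}}_1\times\tilde{\mathcal{T}}_2$ under the geometric hypotheses of Section~\ref{sec4}. Proposition~\ref{lema361}, applied in the two regimes of (\ref{e365}) and (\ref{e366}), then yields a pointwise bound of the form
\[
|U_{\gamma_{d_h}}(\epsilon^{\lambda_1}t_1,\epsilon^{\lambda_2}t_2,m,\epsilon)|\le \tilde{C}\,(1+|m|)^{-\mu}e^{-\beta|m|},
\]
uniformly on the relevant subsets. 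Because $\mu>1$ and $0<\beta'<\beta$, the inverse Fourier transform
\[
u_h(\bt,z,\epsilon):=\mathcal{F}^{-1}\bigl(m\mapsto U_{\gamma_{d_h}}(\epsilon^{\lambda_1}t_1,\epsilon^{\lambda_2}t_2,m,\epsilon)\bigr)(z)
\]
is well-defined and extends to a bounded holomorphic function of $z\in H_{\beta'}$. Joint holomorphy in $(\bt,\epsilon)$ on $\mathcal{T}_1\times\mathcal{T}_2\times\mathcal{E}_h$ is inherited by $U_{\gamma_{d_h}}$, hence by $u_h$, from the analytic dependence of the integrand in (\ref{e242}) on $(\bT,\epsilon)$ together with uniform convergence on compact subsets.

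It remains to check that $u_h$ actually solves (\ref{e1}) with the prescribed null initial data. By construction $\omega^{d_h}$ satisfies (\ref{e3}); the operational dictionary gathered in Section~\ref{secanexo} (which generalises Proposition~\ref{prop6} to handle both $\sigma_{q;T_1}$ and $T_2^{k_2+1}\partial_{T_2}$ simultaneously) transforms (\ref{e3}) into (\ref{e2}), so $U_{\gamma_{d_h}}$ solves (\ref{e2}); and the standard Fourier rules $\mathcal{F}^{-1}(R(im)\hat v)=R(\partial_z)v$ together with the convolution identity convert (\ref{e2}) into (\ref{e1}). The initial conditions $u_h(t_1,0,z,\epsilon)\equiv u_h(0,t_2,z,\epsilon)\equiv 0$ come from the vanishing of $U_{\gamma_{d_h}}$ as $T_1\to 0$ (via the growth of $\Theta_{q^{1/k_1}}(u/T_1)$ coming from (\ref{e190})) and as $T_2\to 0$ (via the decay of $\exp(-(1/T_2)^{k_2}u)$ on $L_{\gamma_{d_h}}$ supplied by the admissibility). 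The main technical hurdle is the uniform convergence and pointwise control of (\ref{e242}) in the regime $k'_1>k_1$: the $q$-Theta kernel alone cannot absorb the growth of $\omega^{d_h}$, so one must balance carefully the three exponentials above and split $L_{\gamma_{d_h}}$ into bounded and unbounded portions according to the regime of $|T_2|$, which is precisely what Proposition~\ref{lema361} is tailored to carry out.
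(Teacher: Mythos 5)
Your proposal follows essentially the same route as the paper: Proposition~\ref{prop5} for $\omega^{d_h}$, the representation (\ref{e242}) with convergence supplied by (\ref{e190}) and the admissibility condition on $\gamma_{d_h}-k_2\arg(T_2)$, the operational identities of Section~\ref{secanexo} to pass from (\ref{e3}) to (\ref{e2}), and the inverse Fourier transform (\ref{e225}) to land on (\ref{e1}); your added remarks on why the genuine exponential in $T_2$ absorbs the surplus $q$-growth ($k'_1>k_1$) and on the null initial data are consistent with, and slightly more detailed than, the paper's argument. The only caveat is that your ``uniform'' pointwise bound via (\ref{e365}) is in fact $|T_2|$-dependent (the bound grows with $|T_2|$), but this matches the level of precision of the paper's own proof and does not affect the construction.
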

\begin{proof}
Let $0\le h\le \iota-1$. Proposition~\ref{prop5} guarantees the existence of $\epsilon_0,C_{\psi}>0$ such that the equation (\ref{e3}) admits a unique solution $\omega^{d_{h}}(\tau,m,\epsilon)$ which belongs to $qExp^{d_h}_{(k'_1,\beta,\mu,\alpha)}$ for every $\epsilon\in D(0,\epsilon_0)$, and the map $\epsilon\mapsto\omega^{d_h}(\tau,m,\epsilon)$ is holomorphic in $D(0,\epsilon_0)$. Taking into account that $\{(S_{d_h})_{0\le h\le\iota-1},\mathcal{T}_1\times\mathcal{T}_2\}$ is associated to the good covering $(\mathcal{E}_{h})_{0\le h\le \iota-1}$ and the properties of Laplace transform stated in Section~\ref{sec32a}, one can construct  $U_{\gamma_h}(\bT,m,\epsilon)$ in the form (\ref{e242}), which is well defined, bounded and continuous  function on $\tilde{\mathcal{T}}_1\times \tilde{\mathcal{T}}_2\times \R\times D(0,\epsilon_0)$, where $\tilde{\mathcal{T}}_1\subseteq \mathcal{R}_{\gamma_h,\tilde{\delta}}\cap D(0,r_1)$ 
for $0<r_1\le q^{(1/2-\alpha)/k_1}/2$, and $\tilde{\mathcal{T}}_2$ is an infinite sector. The function $U_{\gamma_h}(\bT,m,\epsilon)$ is holomorphic w.r.t. $(\bT,\epsilon)$ on $\tilde{\mathcal{T}}_1\times \tilde{\mathcal{T}}_2\times D(0,\epsilon_0)$. As a matter of fact, $U_{\gamma_h}$ is a solution of (\ref{e2}) in view of the properties relating both equalities in Section~\ref{secanexo}. We finally define $u_{h}(\bt,z,\epsilon)$ following (\ref{e225}):
$$u_{h}(\bt,z,\epsilon)=\mathcal{F}^{-1}(m\mapsto U_{\gamma_h}(\epsilon^{\lambda_1}t_1,\epsilon^{\lambda_2}t_2,m,\epsilon))(z),$$
which turns out to be a holomorphic solution of (\ref{e1}), defined on 
 $\mathcal{T}_1\times\mathcal{T}_2\times H_{\beta'}\times\mathcal{E}_{h}$, for any fixed $0<\beta'<\beta$.
\end{proof}

In order to provide the asymptotic behavior of the analytic solutions of (\ref{e1}) in different domains, with respect to the perturbation parameter $\epsilon$, we state the definition of inner and outer solutions of the problem (\ref{e1}).

\subsection{Inner solutions of the main problem}

\begin{defin}\label{defi7a}
Let $\iota_1\ge 2$. Let $(\mathcal{E}^\infty_{h_1})_{0\le h_1\le\iota_1-1}$ be a good covering of $\C^\star$. We also consider the admissible set $\{\mathcal{T}_1,\mathcal{T}_2,(\mathcal{E}^\infty_{h_1})_{0\le h_1\le \iota_1-1},(S^\infty_{d_{h_1}})_{0\le h_1\le \iota_1-1}\}$. Let $\mu_2>0$ be a natural number satisfying
\begin{equation}\label{e435}
\mu_2>\lambda_2,\quad k_1\lambda_1^2>k'_1((\mu_2-\lambda_2)k_2)^2.
\end{equation}

Let $\chi_2^\infty$ be a bounded domain, such that the good covering $(\mathcal{E}^\infty_{h_1})_{0\le h_1\le \iota_1-1}$ satisfies the following condition: for all $0\le h_1\le \iota_1-1$ we can select $\theta_{h_1}\in\R$ (which depends on $\mathcal{E}_{h_1}^{\infty}$) such that for every $x_2\in\chi_2^\infty$ and $\epsilon\in\mathcal{E}_{h_1}^{\infty}$, the complex number $t_2=\frac{x_2}{\epsilon^{\mu_2}}e^{\theta_{h_1}\sqrt{-1}}$ belongs to $\mathcal{T}_2$. We define the set $\mathcal{T}_{2,\epsilon,\mu_2}:=\{\frac{x_2}{\epsilon^{\mu_2}}e^{\theta_{h_1}\sqrt{-1}}:x_2\in\chi_2^\infty\}$.

In case that $\epsilon\in\mathcal{E}^{\infty}_{h_1}$, $t_2\in\mathcal{T}_{2,\epsilon,\mu_2}$, $t_1\in\mathcal{T}_1$, $z\in H_{\beta'}$ for $0<\beta'<\beta$, then we say that $u_{h_1}(\bt,z,\epsilon)$ represents an \textit{inner solution} of (\ref{e1}).
\end{defin}

\begin{theo}\label{teo2a}
Under the assumptions of Theorem~\ref{teo1} and the constraints on the inner solutions of the main problem of Definition~\ref{defi7a}, let $(\mathcal{E}^\infty_{h_1})_{0\le h_1\le\iota_1}$ be a good covering of $\C^\star$. Then, there exists $C_{inn}>0$ such that for every $0\le h_1\le \iota_1-1$ , $\epsilon\in\mathcal{E}^\infty_{h_1}\cap\mathcal{E}^\infty_{h_1+1}$, $t_{2}\in\mathcal{T}_{2,\epsilon,\mu_2}$, $z\in H_{\beta'}$ for any fixed $0<\beta'<\beta$ and $t_1\in\mathcal{T}_1$, one has
\begin{multline}\label{e449}
|u_{h_1+1}(\bt,z,\epsilon)-u_{h_1}(\bt,z,\epsilon)|\le C_{inn}|\epsilon|^{\Delta_1}\exp\left(-\frac{k''_1}{2\log(q)}\log^2\left(\frac{\hat{C}_1}{|\epsilon|^{\lambda_1}}\right)\right.\\
\left.+\frac{k'_1}{\log(q)}\log\left(\frac{\hat{C}_2}{|\epsilon|^{(\mu_2-\lambda_2)k_2}}\right)\log\left(\log(\frac{\hat{C}_3}{|\epsilon|^{(\mu_2-\lambda_2)k_2}})\right)\right),
\end{multline}
for some $\hat{C}_1,\hat{C}_2,\hat{C}_3>0$, $\Delta_1\in\R$ and a real number $k''_1\in(0,k_1)$ chosen small enough.
\end{theo}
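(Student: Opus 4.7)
The plan is to follow a Cauchy path-deformation argument in the Borel plane. First, observe that the fixed points $\omega^{d_{h_1}}$ and $\omega^{d_{h_1+1}}$ furnished by Proposition~\ref{prop5} satisfy the same equation~(\ref{e3}), whose form does not depend on the sectorial direction $d$. By uniqueness of the fixed point on the common domain $\overline{D}(0,\rho)$, the two functions coincide there and, by analytic continuation through this open disc, they coincide on $S_{d_{h_1}}\cap S_{d_{h_1+1}}$ as well. Hence they glue into a single holomorphic function $\omega(u,m,\epsilon)$ on $S_{d_{h_1}}\cup S_{d_{h_1+1}}\cup D(0,\rho)$, inheriting the $q$-exponential growth bound from both Banach spaces.

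Next, I would apply Cauchy's theorem to the integrand
\[
F_{\bT,m,\epsilon}(u):=\omega(u,m,\epsilon)\,\frac{1}{\Theta_{q^{1/k_1}}(u/T_1)}\,e^{-(1/T_2)^{k_2}u}\,\frac{1}{u},
\]
with $\bT=(T_1,T_2)=(\epsilon^{\lambda_1}t_1,\epsilon^{\lambda_2}t_2)$. Splitting each half-line $L_{\gamma_h}$ at a radius $R=R(\epsilon)>0$ to be chosen below, the difference $U_{\gamma_{h_1+1}}(\bT,m,\epsilon)-U_{\gamma_{h_1}}(\bT,m,\epsilon)$ decomposes as the sum of two tail integrals over $L_{\gamma_h}\cap\{|u|\ge R\}$ for $h\in\{h_1,h_1+1\}$ plus one arc integral at radius $R$ joining $\gamma_{h_1}$ to $\gamma_{h_1+1}$. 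The arc lies in the common domain of $\omega$ by Step~1 and admissibility: consecutive sectors in the good covering force $\gamma_{h_1}$ and $\gamma_{h_1+1}$ to be close, so the angular constraint $\cos(\gamma-k_2\arg T_2)>\delta_3$ persists for all intermediate angles.

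On the arc and the tails I would bound $|\omega(u,m,\epsilon)|\le\varpi(1+|m|)^{-\mu}e^{-\beta|m|}\exp\!\bigl(\tfrac{k'_1}{2\log q}\log^2(|u|+\delta)+\alpha\log(|u|+\delta)\bigr)$ by Proposition~\ref{prop5}, $|\Theta_{q^{1/k_1}}(u/T_1)|^{-1}\lesssim\exp\!\bigl(-\tfrac{k_1}{2\log q}\log^2(|u|/|T_1|)\bigr)|T_1/u|^{1/2}$ by~(\ref{e190}), and $|e^{-(1/T_2)^{k_2}u}|\le e^{-\delta_3|u|/|T_2|^{k_2}}$ by the angular condition. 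The $|u|$-dependent part of the resulting exponent is
\[
E(|u|)\;=\;\tfrac{k'_1}{2\log q}\log^2|u|\,-\,\tfrac{k_1}{2\log q}\log^2\!\bigl(|u|/|T_1|\bigr)\,-\,\tfrac{\delta_3|u|}{|T_2|^{k_2}},
\]
and the optimal $R(\epsilon)$ is defined by $E'(R)=0$. Since $k'_1>k_1$, this critical equation takes the form $\log R=\mathrm{const}(\epsilon)+c(\epsilon)\,R$ with $c(\epsilon)=\delta_3\log q/((k'_1-k_1)|T_2|^{k_2})\to 0$ as $\epsilon\to 0$; Lemma~\ref{lema713} identifies its root as a $-1$-branch Lambert $W$ value, yielding $\log R(\epsilon)\sim\log(1/c(\epsilon))+\log\log(1/c(\epsilon))$.

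Substituting $|T_1|\asymp|\epsilon|^{\lambda_1}$ and $|T_2|^{k_2}\asymp|\epsilon|^{-(\mu_2-\lambda_2)k_2}$ into $E(R(\epsilon))$ and expanding $\log^2(R/|T_1|)=\log^2R-2\log R\,\log|T_1|+\log^2|T_1|$, the quadratic $-\tfrac{k_1}{2\log q}\log^2|T_1|$ yields the leading term $-\tfrac{k_1}{2\log q}\log^2(\hat C_1/|\epsilon|^{\lambda_1})$ of~(\ref{e449}), whereas the growth residue $\tfrac{k'_1-k_1}{2\log q}\log^2R(\epsilon)$ together with the cross term $\tfrac{k_1}{\log q}\log R(\epsilon)\,\log(1/|T_1|)$ are controlled by the second summand of~(\ref{e449}) with coefficient $\tfrac{k'_1}{\log q}$. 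The condition $k_1\lambda_1^2>k'_1((\mu_2-\lambda_2)k_2)^2$ in~(\ref{e435}) is precisely what guarantees this residue cannot cancel the leading $q$-Gaussian decay, so a slight reduction $k_1\mapsto k''_1\in(0,k_1)$ absorbs all cross and sub-leading terms while $\Delta_1$ captures the polynomial factors $R^{-1/2}|T_1|^{1/2}$ and constants. Finally, inverse Fourier transformation in $m$ (using $\mu>1$ and the weight $e^{-\beta|m|}$, integrable against $|e^{izm}|\le e^{\beta'|m|}$ on $H_{\beta'}$) transfers the Borel-plane bound to~(\ref{e449}). The main obstacle is precisely this optimization: the Lambert $W$ analysis in Lemma~\ref{lema713} and the verification that~(\ref{e435}) is the sharp algebraic condition producing a net $q$-Gaussian rate $k''_1$ in~$\epsilon$.
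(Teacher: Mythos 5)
Your Step 1 (gluing $\omega^{d_{h_1}}$ and $\omega^{d_{h_1+1}}$ through the common disc) is fine and matches the paper's starting point, but the core of your argument — deforming onto an arc of $\epsilon$-dependent radius $R(\epsilon)$ chosen by optimizing the exponent, with $R(\epsilon)\to\infty$ as $\epsilon\to 0$ — contains a genuine gap. Cauchy's theorem requires the integrand to be holomorphic on the whole two-dimensional sectorial region bounded by the two rays and the connecting arc, not merely on the arc itself. The glued function $\omega$ is only known to be holomorphic on $S_{d_{h_1}}\cup S_{d_{h_1+1}}\cup D(0,\rho)$, and the angular region between the two Borel directions outside $D(0,\rho)$ is precisely where the roots $q_\ell(m)$ of $P_m(\tau)$ sit (this is why the directions $d_{h_1}$, $d_{h_1+1}$ are chosen as they are, and it is what Figure~\ref{fig1} depicts): there is no analytic continuation with the required bounds there. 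Your justification that ``the arc lies in the common domain of $\omega$ by Step 1 and admissibility'' confuses closeness of the $\epsilon$-sectors of the good covering with overlap of the Borel-plane sectors $S_{d_{h_1}}$, $S_{d_{h_1+1}}$; consecutive directions are in general separated by singularities of $P_m$, so an arc of radius $R(\epsilon)>\rho$ leaves the domain of holomorphy and the decomposition into two tails plus one arc is not valid. This is exactly why the paper keeps the connecting arc at the fixed radius $\rho/2$ inside the disc (the paths $L_{\gamma_{h},\rho/2}$ and the arc $C_{h_1,h_1+1,\rho/2}$).

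Beyond this, the optimization over $R$ is also not what produces the stated bound in the paper, and it would not buy you anything even if the deformation were legitimate: truncating at the maximizer of the exponent leaves tail integrals of the same size as the full ray integrals. In the paper the mechanism is different: the truncated rays $J_1,J_2$ are estimated as in Proposition~\ref{lema361} (the monotonicity of $r\mapsto\log^2(r/|T_1|)$ on $[\rho,\infty)$ extracts the $q$-Gaussian decay $\exp(-\tfrac{k_1}{2\log q}\log^2(\tfrac{\rho/2}{|\epsilon^{\lambda_1}t_1|}))$, and the $\log\cdot\log\log$ growth term in (\ref{e449}) comes from a Gamma-function/Stirling estimate of the remaining integral, see (\ref{e392}) and (\ref{e419}), not from a Lambert $W$ optimization), while $J_3$ on the fixed arc is exponentially small directly. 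Condition (\ref{e435}) then lets the decay of rate $k_1\lambda_1^2$ absorb the $\log^2$ growth of rate $k'_1((\mu_2-\lambda_2)k_2)^2$, which is where the ``small enough $k''_1\in(0,k_1)$'' enters. Finally, Lemma~\ref{lema713} plays no role in the proof of Theorem~\ref{teo2a}; it is used afterwards, in Theorem~\ref{teopral1}, to convert (\ref{e449}) into the $((q,K);S)$-Gevrey flatness needed for Theorem~\ref{teo553}, so invoking it here to locate your truncation radius is a misattribution. If you replace your $R(\epsilon)$ by the fixed radius $\rho/2$ and estimate the truncated rays and the arc as above, your argument collapses to the paper's proof and the optimization step becomes unnecessary.
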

\begin{proof}
Let $0\le h_1\le \iota_1-1$ and consider consecutive solutions $u_{h_1},u_{h_1+1}$ of (\ref{e1}), constructed in Theorem~\ref{teo1}. We recall that the function $\omega^{d_{h_1}}(\tau,m,\epsilon)$ stands for the analytic continuation of a function $\omega(\tau,m,\epsilon)$, holomorphic w.r.t. $\tau$ in a neighborhood of the origin $D(0,\rho)$, to the infinite sector $S_{d_{h_1}}$. This entails that the difference $u_{h_1+1}(\bt,z,\epsilon)-u_{h_1}(\bt,z,\epsilon)$ can be written in the following form, after an appropriate path deformation in $\tau$, avoiding the roots of $P_m(\tau)$ (see Figure~\ref{fig1}).

\begin{figure}[ht]
\begin{center}
\includegraphics[width=.45\textwidth]{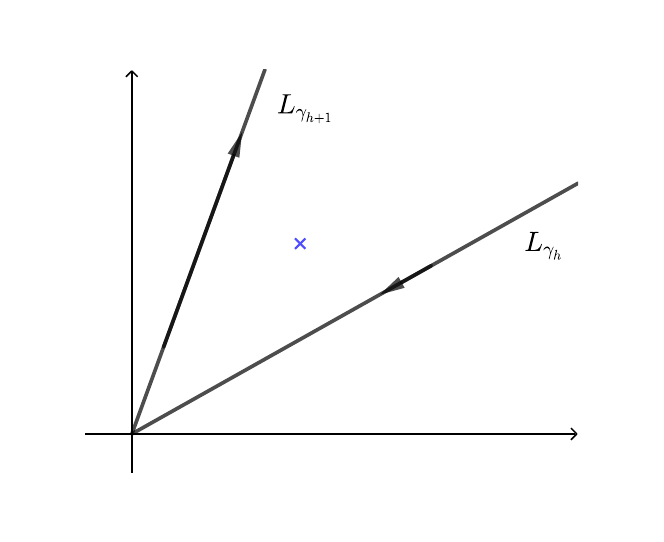}
\includegraphics[width=.45\textwidth]{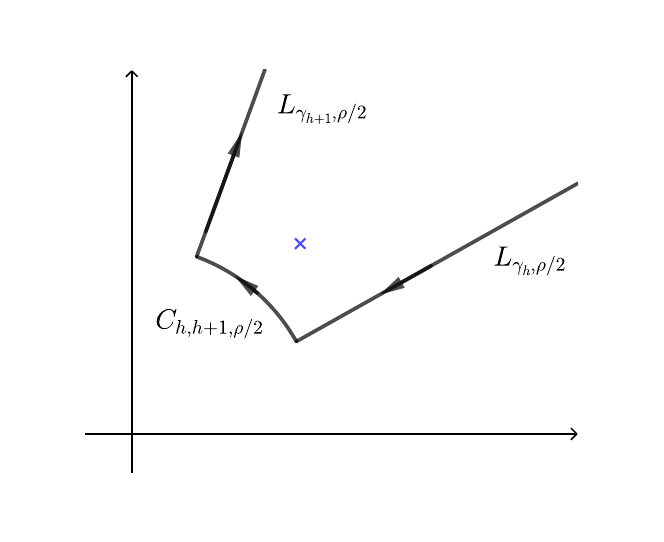}
\end{center}
\caption{Initial (left) and deformed path (right) for the difference of two consecutive solutions of (\ref{e1}). The symbol ``$\times$'' represents a root of $P_m(\tau)$}\label{fig1}
\end{figure}

We write $u_{h_1+1}(\bt,z,\epsilon)-u_{h_1}(\bt,z,\epsilon)=J_1-J_2+J_3$, where
\begin{multline}
J_1=\frac{1}{(2\pi)^{1/2}}\frac{1}{\pi_{q^{1/k_1}}}\int_{-\infty}^{\infty}\int_{L_{\gamma_{h_1+1},\rho/2}}\omega^{d_{h_1+1}}(u,m,\epsilon)\frac{1}{\Theta_{q^{1/k_1}}\left(\frac{u}{\epsilon^{\lambda_1}t_1}\right)}e^{-\left(\frac{1}{\epsilon^{\lambda_2}t_2}\right)u}\frac{du}{u}e^{izm} dm,\\
J_2=\frac{1}{(2\pi)^{1/2}}\frac{1}{\pi_{q^{1/k_1}}}\int_{-\infty}^{\infty}\int_{L_{\gamma_{h_1},\rho/2}}\omega^{d_{h_1}}(u,m,\epsilon)\frac{1}{\Theta_{q^{1/k_1}}\left(\frac{u}{\epsilon^{\lambda_1}t_1}\right)}e^{-\left(\frac{1}{\epsilon^{\lambda_2}t_2}\right)u}\frac{du}{u}e^{izm} dm,\\
J_3=\frac{1}{(2\pi)^{1/2}}\frac{1}{\pi_{q^{1/k_1}}}\int_{-\infty}^{\infty}\int_{C_{h_1,h_1+1,\rho/2}}\omega(u,m,\epsilon)\frac{1}{\Theta_{q^{1/k_1}}\left(\frac{u}{\epsilon^{\lambda_1}t_1}\right)}e^{-\left(\frac{1}{\epsilon^{\lambda_2}t_2}\right)u}\frac{du}{u}e^{izm} dm,
\end{multline}
where $L_{\gamma_{j},\rho/2}=[\rho/2e^{\gamma_{j}\sqrt{-1}},\infty)$ for $j=h_1,h_1+1$, and $C_{h_1,h_1+1,\rho/2}$ stands for the arc of circle from $\rho/2e^{\gamma_{h_1}\sqrt{-1}}$ to $\rho/2e^{\gamma_{h_1+1}\sqrt{-1}}$.

Assume that $\epsilon\in\mathcal{E}^\infty_{h_1}\cap\mathcal{E}^\infty_{h_1+1}$ and $t_2\in\mathcal{T}_{2,\epsilon,\mu_2}$, $z\in H_{\beta'}$ for some $0<\beta'<\beta$ and $t_1\in\mathcal{T}_1$. Owing to the estimates leading to (\ref{e365}), displayed in (\ref{e419}), we derive that $\hbox{dist}(\chi_{2}^{\infty}\epsilon^{\lambda_2-\mu_2},0)\ge\rho_2^\infty$ for some large enough $\rho_2^\infty$. Moreover, if $\rho_0>0$ is small enough, then there exists $\tilde{C}_8>0$ such that
\begin{multline}
|J_1|\le \tilde{C}_8|\epsilon^{\lambda_1}t_1|^{1/2}e^{-\frac{k_1}{2\log(q)}\log^2\left(\frac{\rho/2}{|\epsilon^{\lambda_1}t_1|}\right)}\frac{|\epsilon^{\lambda_2-\mu_2}x_2|^{k_2}}{\delta_3}\\
\times e^{\frac{k'_1}{2\log(q)}\log^2(\frac{|\epsilon^{\lambda_2-\mu_2}x_2|^{k_2}}{\delta_3})+\alpha\log(\frac{|\epsilon^{\lambda_2-\mu_2}x_2|^{k_2}}{\delta_3})}\left[1+\left(\log(\frac{|\epsilon^{\lambda_2-\mu_2}x_2|^{k_2}}{\delta_3})\right)^{1/2}\right.\\
\times\left.e^{\frac{k'_1}{\log(q)}\log\left(\frac{|\epsilon^{\lambda_2-\mu_2}x_2|^{k_2}}{\delta_3}\right)\log\left(\frac{k'_1}{\log(q)}\log\left(\frac{|\epsilon^{\lambda_2-\mu_2}x_2|^{k_2}}{\delta_3}\right)\right)}\right]\int_{-\infty}^{\infty}(1+|m|)^{-\mu}e^{-|m|(\beta-\beta')}d|m|\label{e488}
\end{multline}
for every $t_1\in \mathcal{T}_1$, $z\in H_{\beta'}$, $\epsilon\in\mathcal{E}^\infty_{h_1}\cap\mathcal{E}^\infty_{h_1+1}$, and $x_2\in\chi_2^{\infty}$.
An analogous upper bound is attained for $|J_2|$. Concerning $|J_3|$, one can apply the bounds stated in Proposition~\ref{prop5}, and (\ref{e190}) to get the existence of $\tilde{C}_9>0$ such that
\begin{align}
|J_3|&\le \tilde{C}_9\left(\frac{2}{\rho}\right)^{3/2}|\epsilon^{\lambda_1}t_1|^{1/2}\exp\left(-\frac{k_1}{2\log(q)}\log^2\left(\frac{\rho/2}{|\epsilon^{\lambda_1}t_1|}\right)\right)\exp\left(-\frac{\rho/2}{|\epsilon^{\lambda_2-\mu_2}x_2|^{k_2}}\delta_3\right)\nonumber\\
&\le \tilde{C}_9\left(\frac{2}{\rho}\right)^{3/2}|\epsilon^{\lambda_1}t_1|^{1/2}\exp\left(-\frac{k_1}{2\log(q)}\log^2\left(\frac{\rho/2}{|\epsilon^{\lambda_1}t_1|}\right)\right).\label{e489}
\end{align} 
The result follows taking into account (\ref{e488}) and (\ref{e489}), under the condition (\ref{e435}).
\end{proof}

\subsection{Outer solutions of the main problem}

\begin{defin}\label{defi7b}
Let $\iota_2\ge2$ and  $(\mathcal{E}_{h_2}^0)_{0\le h_2\le\iota_2}$ be a good covering of $\C^\star$, and consider an admissible set $\{\mathcal{T}_1,\mathcal{T}_2,(\mathcal{E}^0_{h_2})_{0\le h_2\le \iota_2-1},(S^0_{d_{h_2}})_{0\le h_2\le \iota_2-1}\}$. Assume that $t_2\in\mathcal{T}_2$ is such that $|t_2|<\rho_2$ for some fixed $\rho_2>0$ which is independent of $\epsilon$, then we say that $u_{h_2}(\bt,z,\epsilon)$ represents an \textit{outer solution} of (\ref{e1}).
\end{defin}

\begin{theo}\label{teo2b}
Under the assumptions of Theorem~\ref{teo1} and the constraints on the outer solutions of the main problem of Definition~\ref{defi7b}, let $(\mathcal{E}^0_{h_2})_{0\le h_2\le\iota_2}$ be a good covering of $\C^\star$. Then, there exists $C_{out}>0$ such that for every $0\le h_2\le \iota_2-1$ , $\epsilon\in\mathcal{E}^0_{h_2}\cap\mathcal{E}^0_{h_2+1}$, $t_{2}\in\mathcal{T}_{2}$ with $|t_2|<\rho_2$, $z\in H_{\beta'}$ for any fixed $0<\beta'<\beta$ and $t_1\in\mathcal{T}_1$, one has
\begin{equation}\label{e449b}
|u_{h_2+1}(\bt,z,\epsilon)-u_{h_2}(\bt,z,\epsilon)|\le C_{out}|\epsilon|^{\Delta_2}\exp\left(-\frac{k_1}{2\log(q)}\log^2\left(\frac{\hat{C}_4}{|\epsilon|^{\lambda_1}}\right)-\frac{\hat{C}_5}{|\epsilon|^{\lambda_2k_2}}\right),
\end{equation}
for some $\hat{C}_4,\hat{C}_5>0$ and $\Delta_2\in\R$.
\end{theo}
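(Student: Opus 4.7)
The plan is to mimic the structure of the proof of Theorem~\ref{teo2a}, but substitute the outer bound (\ref{e366}) from Proposition~\ref{lema361}(2) for the inner bound (\ref{e365}). Since $\omega^{d_{h_2}}(\tau,m,\epsilon)$ and $\omega^{d_{h_2+1}}(\tau,m,\epsilon)$ are both analytic continuations, from the common neighbourhood $D(0,\rho)$, of the unique fixed point of $\mathcal{H}_\epsilon$ in $qExp^{d_{h_2}}_{(k'_1,\beta,\mu,\alpha)}$ and $qExp^{d_{h_2+1}}_{(k'_1,\beta,\mu,\alpha)}$ respectively, I would deform the two Laplace contours $L_{\gamma_{h_2}}$ and $L_{\gamma_{h_2+1}}$ so that each starts at radius $\rho/2$, joining them by an arc $C_{h_2,h_2+1,\rho/2}$ of radius $\rho/2$. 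This deformation is legitimate thanks to the geometric conditions imposed on $S_{d_{h_2}}$ and $S_{d_{h_2+1}}$ which keep the roots of $P_m(\tau)$ away from the deformed region (cf.\ Figure~\ref{fig1}), and it produces the decomposition $u_{h_2+1}(\bt,z,\epsilon) - u_{h_2}(\bt,z,\epsilon) = J_1 - J_2 + J_3$ exactly as in the proof of Theorem~\ref{teo2a}.

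For the two ray integrals $J_1$ and $J_2$, I would apply the uniform bound from Proposition~\ref{prop5} on $\omega^{d_{h_2}}$ and $\omega^{d_{h_2+1}}$ in the corresponding $qExp$-norm, the lower estimate (\ref{e190}) on $|\Theta_{q^{1/k_1}}(u/T_1)|$ with $T_1=\epsilon^{\lambda_1}t_1$, and the pointwise bound $|e^{-u/T_2^{k_2}}|\le e^{-|u|\delta_3/|T_2|^{k_2}}$ with $T_2=\epsilon^{\lambda_2}t_2$. In the outer regime $|t_2|<\rho_2$ one has $|T_2|^{k_2}\le \rho_2^{k_2}|\epsilon|^{\lambda_2 k_2}$, hence the factor $e^{-|u|\delta_3/|T_2|^{k_2}}$ dominates, for $\epsilon$ small enough, the $q$-exponential growth $\exp\!\bigl(\tfrac{k'_1}{2\log q}\log^2(|u|+\delta)\bigr)$ of $\omega$ along the whole ray $|u|\ge \rho/2$. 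A standard Laplace-type evaluation of the remaining integral in $|u|$ then produces a factor $\exp(-\hat{C}_5/|\epsilon|^{\lambda_2 k_2})$, while the theta bound yields $\exp(-\tfrac{k_1}{2\log q}\log^2(\hat{C}_4/|\epsilon|^{\lambda_1}))$; the leftover polynomial and logarithmic factors in $|\epsilon|$ are absorbed into $|\epsilon|^{\Delta_2}$. The Fourier integral in $m$ converges absolutely because of the $(\beta,\mu)$-control in $qExp^{d}_{(k'_1,\beta,\mu,\alpha)}$ and the assumption $\mu>1$.

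For the arc contribution $J_3$, $\omega$ is uniformly bounded on $\overline{D}(0,\rho/2)$ by its Banach-space norm, and I would combine this with the theta bound $\exp(-\tfrac{k_1}{2\log q}\log^2(\rho/(2|T_1|)))$ and the exponential bound $\exp(-\rho\delta_3/(2|T_2|^{k_2}))$, both evaluated at the single radius $|u|=\rho/2$ so that the bounded arc-length contributes only a harmless constant. Combining the estimates on $J_1$, $J_2$, $J_3$ and plugging in $T_j=\epsilon^{\lambda_j}t_j$ yields (\ref{e449b}). The main technical point, which is milder than in Theorem~\ref{teo2a}, is to verify rigorously that on the rays the ordinary exponential $e^{-|u|\delta_3/|T_2|^{k_2}}$ absorbs the $q$-exponential of $\omega$ so as to produce the clean term $\exp(-\hat{C}_5/|\epsilon|^{\lambda_2 k_2})$; this is precisely the step where the outer analysis simplifies, as no recourse to the $-1$-branch of the Lambert $W$ function (central to the inner case treated via Lemma~\ref{lema713} and Theorem~\ref{teopral1}) is necessary.
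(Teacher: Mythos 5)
Your proposal is correct and follows essentially the same route as the paper: the same deformation into $J_1-J_2+J_3$ with the arc at radius $\rho/2$, with $J_1,J_2$ estimated exactly as in the outer bounds (\ref{e390b})--(\ref{e391b}) of Proposition~\ref{lema361} (splitting the decaying exponential so that one half absorbs the $q$-exponential growth of $\omega$ and the other yields $\exp(-\hat{C}_5/|\epsilon|^{\lambda_2 k_2})$), and $J_3$ bounded directly on the arc. The resulting combination with $T_j=\epsilon^{\lambda_j}t_j$ matches the paper's conclusion, so no gap to report.
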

\begin{proof}
Let $0\le h_2\le \iota_2-1$ and consider consecutive solutions $u_{h_2},u_{h_2+1}$ of (\ref{e1}), constructed in Theorem~\ref{teo1}. We proceed to write the difference of two consecutive solutions in the form 
$$u_{h_2+1}(\bt,z,\epsilon)-u_{h_2}(\bt,z,\epsilon)=J_1-J_2+J_3,$$
for all $0\le h_2\le\iota_2-1$, with $h_1$ substituted by $h_2$ in the expressions of $J_1,J_2,J_3$ in the proof of Theorem~\ref{teo2a}. Let $\epsilon\in\mathcal{E}^0_{h_2}\cap\mathcal{E}^0_{h_2+1}$ and $t_2\in\mathcal{T}_{2}$, with $|t_2|<\rho_2$, $z\in H_{\beta'}$ for some $0<\beta'<\beta$ and $t_1\in\mathcal{T}_1$. Owing to analogous bounds as those leading to (\ref{e390b}) and (\ref{e391b}), we arrive at
$$|J_1|\le \tilde{C}_{10}|\epsilon^{\lambda_1}t_1|^{1/2}\exp\left(-\frac{k_1}{2\log(q)}\log^2\left(\frac{\rho}{|\epsilon^{\lambda_1}t_1|}\right)\right)\exp\left(-\frac{\rho/2}{2|\epsilon^{\lambda_2}t_2|^{k_2}}\delta_3\right),$$
for some $\tilde{C}_{10}>0$. Similar estimates hold for $|J_2|$. On the other hand, direct computations yield
$$|J_3|\le \tilde{C}_{11}\exp\left(-\frac{k_1}{2\log(q)}\log^2\left(\frac{\rho/2}{|\epsilon^{\lambda_1}t_1|}\right)\right)\left(\frac{|\epsilon^{\lambda_1}t_1|}{\rho/2}\right)^{1/2}\exp\left(-\frac{\rho/2}{|\epsilon^{\lambda_2}t_2|^{k_2}\delta_1}\right)\frac{2}{\rho},$$
for some $\tilde{C}_{11}>0$. This concludes the proof.
\end{proof}

\section{Asymptotic expansions of mixed order}\label{sec55}

This section is divided in two parts. The first part recalls some facts about $q-$asymptotic expansions and also describes $q-$asymptotic expansions which show a sub-Gevrey growth in their estimates, and related results.

In the second part of this section, we provide the existence of a formal solution of the main problem under study, written as a formal power series in the perturbation parameter, and explain the asymptotic relationship between this and the analytic solutions.

\subsection{Review on $q$-asymptotic expansions}

In the whole subsection, $(\mathbb{F},\left\|\cdot\right\|)$ stands for a complex Banach space.

We first recall the notion of $q-$Gevrey asymptotic expansions, which can be also found in~\cite{lamaq} in more detail.

\begin{defin}
Let $V$ be a bounded open sector with vertex at 0 in $\C$, $q\in\R$ with $q>1$. We also fix a positive integer $k$. We say that a holomorphic function $f:V\to\mathbb{F}$ admits the formal power series $\hat{f}(\epsilon)=\sum_{n\ge0}f_n\epsilon^n\in\mathbb{F}[[\epsilon]]$ as its $q-$Gevrey asymptotic expansion of order $1/k$ if for every open subsector $U$ of $V$, i.e. $\overline{U}\setminus\{0\}\subseteq V$, there exist $A, C>0$ such that
$$\left\|f(\epsilon)-\sum_{n=0}^{N}f_n\epsilon^n\right\|_{\mathbb{F}}\le C A^{N+1}q^{\frac{N(N+1)}{2k}}|\epsilon|^{N+1},$$
for every $\epsilon\in U$ and $N\ge 0$.
\end{defin}

Such pure $q-$asymptotic expansions have been recently studied when dealing with the asymptotic behavior of the solutions of $q-$difference-differential equations in the complex domain~\cite{dlm,lamaq2,lamaq,maq}.

We recall the definition of asymptotic expansion of mixed order as introduced in the work~\cite{maq19}.

\begin{defin}
Let $V$ be a bounded open sector of the complex plane with vertex at the origin. Let $q>1$ and $s\ge0$ be real numbers, and let $k$ be a positive integer. The function $f:V\to\mathbb{F}$ is said to admit the formal series $\hat{F}(\epsilon)=\sum_{n\ge0}f_n\epsilon^n\in\mathbb{F}[[\epsilon]]$ as its $((q,k);s)-$Gevrey asymptotic expansion if for every open subsector $U$ of $V$ (i.e. $(\overline{U}\setminus\{0\})\subseteq V$) there exist $A,C>0$ such that 
$$\left\|f(\epsilon)-\sum_{n=0}^{N}f_n\epsilon^n\right\|_{\mathbb{F}}\le CA^NN!^sq^{\frac{N(N+1)}{2k}}|\epsilon|^{N+1},$$
for all $\epsilon\in U$ and $N\ge0$.
\end{defin}

Observe that the set of functions admitting $q-$Gevrey asymptotic expansions of order $1/k$ coincide with the functions admitting $((q,k);0)-$Gevrey asymptotic expansion.

We now proceed to state a $((q,k);s)-$version of the Ramis-Sibuya Theorem. The classical statement of this result involves Gevrey asymptotic expansions, and guarantee $s$-summability of some power seires. This cohomological criterion can be found in Proposition 2~\cite{ba} and Lemma XI-2-6~\cite{hsiehsibuya}. 

This version has already been stated in the work~\cite{maq19} where a complete proof can be found therein.


\begin{theo}[$((q,k);s)-$Ramis-Sibuya Theorem]\label{teo553}

Let $(\mathcal{E}_h)_{0\le h\le \iota-1}$ be a good covering in $\C^{\star}$. For every $0\le h\le \iota-1$ we consider a holomorphic function $G_h:\mathcal{E}_h\to\mathbb{F}$ and define $\Delta_h:=G_{h+1}-G_h$, which turns out to be a holomorphic function in $Z_h:=\mathcal{E}_{h}\cap\mathcal{E}_{h+1}$ (here, $\mathcal{E}_{\iota}$ and $G_{\iota}$ stand for $\mathcal{E}_0$ and $G_{0}$, respectively). Assume that the following statements hold:
\begin{itemize}
\item $G_h$ is a bounded function in a vicinity of 0, for every $0\le h\le \iota-1$.
\item The function $\Delta_h(\epsilon)$ admits null $((q,k);s)-$Gevrey asymptotic expansion in $Z_h$ for every $0\le h\le \iota-1$, i.e. there exist $C_1,C_2>0$ such that
\begin{equation}\label{e553}
\left\|\Delta_h(\epsilon)\right\|_{\mathbb{F}}\le C_1C_2^{N}N!^sq^{\frac{N(N+1)}{2k}}|\epsilon|^{N+1},
\end{equation}
for every $\epsilon\in Z_h$ and all $N\ge0$.
\end{itemize}
Then, there exists $\hat{G}\in\mathbb{F}[[\epsilon]]$ which is the common $((q,k);s)-$Gevrey asymptotic expansion of $G_h(\epsilon)$ on $\mathcal{E}_h$, for all $0\le h\le \iota-1$.
\end{theo}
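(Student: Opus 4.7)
The proof follows the classical Cauchy--Heine pattern adapted to the $((q,k);s)$-Gevrey setting. My plan is, first, to associate to each cocycle $\Delta_h$ a Cauchy--Heine transform $\Psi_h$; secondly, to rewrite each $G_h$ as a sum of such transforms plus a function that is holomorphic on a full neighborhood of the origin; and finally to read off the common asymptotic expansion from the individual asymptotics of the $\Psi_h$.

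For each $0\le h\le \iota-1$, fix a point $\xi_h$ on a ray inside $Z_h=\mathcal{E}_h\cap\mathcal{E}_{h+1}$ close to the origin, and let $L_h$ be the oriented segment from $0$ to $\xi_h$ traversed inside $Z_h$. Define
$$\Psi_h(\epsilon) := \frac{1}{2\pi i}\int_{L_h}\frac{\Delta_h(\xi)}{\xi-\epsilon}\,d\xi,$$
which, thanks to the null asymptotic bound (\ref{e553}) applied with $N=0$, is a holomorphic function of $\epsilon$ on $\C\setminus L_h$. A standard jump argument shows that the two boundary values of $\Psi_h$ across $L_h$ differ by $\Delta_h$ on $Z_h$. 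Consequently, on each $\mathcal{E}_h$ the combination
$$a(\epsilon) := G_h(\epsilon) - \sum_{j=0}^{\iota-1}\Psi_j(\epsilon),$$
with the appropriate boundary values chosen so that the jumps $\Delta_j$ cancel the differences $G_{j+1}-G_j$, defines a single-valued holomorphic function $a$ on a punctured neighborhood of $0$. Since each $G_h$ is bounded near $0$ by hypothesis and the Cauchy--Heine integrals are bounded on compact subsets away from the cuts, $a$ is bounded near $0$ and hence extends holomorphically to $0$ by Riemann's removable singularity theorem; its Taylor series will be denoted $\hat a(\epsilon)\in\mathbb{F}[[\epsilon]]$.

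The core technical step is to show that each $\Psi_h(\epsilon)$ admits on both sides of $L_h$ a $((q,k);s)$-Gevrey asymptotic expansion $\hat\Psi_h(\epsilon)=\sum_{n\ge 0}\psi_{h,n}\epsilon^n$ with coefficients $\psi_{h,n} := \frac{1}{2\pi i}\int_{L_h}\Delta_h(\xi)\,\xi^{-n-1}\,d\xi$. From the identity
$$\Psi_h(\epsilon) - \sum_{n=0}^{N-1}\psi_{h,n}\epsilon^n \;=\; \frac{\epsilon^{N}}{2\pi i}\int_{L_h}\frac{\Delta_h(\xi)}{\xi^{N}(\xi-\epsilon)}\,d\xi,$$
valid on any subsector of $\C\setminus L_h$ on which $|\xi-\epsilon|\ge c|\xi|$ uniformly for some $c>0$, the bound (\ref{e553}) applied with exponent $N$ yields
$$\left\|\Psi_h(\epsilon) - \sum_{n=0}^{N-1}\psi_{h,n}\epsilon^n\right\|_{\mathbb{F}} \;\le\; \frac{C_1}{2\pi c}\,C_2^{N}\,N!^{s}\,q^{\frac{N(N+1)}{2k}}\,|\epsilon|^{N}\int_{L_h}|d\xi|,$$
which is exactly the $((q,k);s)$-Gevrey remainder estimate, with constants adjusted by a factor depending on the length of $L_h$ and on $c$. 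Once the asymptotics of all $\Psi_j$ are established, set $\hat G:=\hat a + \sum_{j=0}^{\iota-1}\hat\Psi_j$; the decomposition $G_h=a+\sum_j\Psi_j$ on $\mathcal{E}_h$ then exhibits $\hat G$ as the common $((q,k);s)$-Gevrey asymptotic expansion of every $G_h$.

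The main obstacle is precisely the remainder estimate just sketched: one must choose the segments $L_h$ and the subsectors of $\mathcal{E}_h$ over which the asymptotic is claimed so that the separation bound $|\xi-\epsilon|\ge c|\xi|$ holds, and one must ensure that the factor $\xi^{-N}$ is integrable near the tip $\xi=0$. The latter is taken care of by the $|\xi|^{N+1}$ prefactor in (\ref{e553}) applied for every $N\ge 0$, which is precisely why the null asymptotic hypothesis is formulated as an estimate valid for all $N$ simultaneously. Once this is arranged, the $q$-Gevrey factor $q^{N(N+1)/(2k)}$ and the Gevrey factor $N!^s$ in (\ref{e553}) are transported through the Cauchy--Heine integral without alteration, yielding the required $((q,k);s)$-asymptotic type and completing the proof.
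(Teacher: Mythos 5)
Your Cauchy--Heine argument is essentially the standard proof of this theorem; the paper itself does not reprove it but defers to \cite{maq19}, where the same construction (Cauchy--Heine transforms of the cocycles, gluing of $G_h-\sum_j\Psi_j$ to a bounded single-valued function near the origin, and transport of the bounds (\ref{e553}) through the moment integrals) is carried out. The only details you leave implicit --- the index shift from $N$ to $N+1$, which costs a factor $q^{(N+1)/k}(N+1)^{s}$ absorbable into the geometric constant, and the validity of the expansion on subsectors of $\mathcal{E}_h$ meeting the cut directions, handled by rotating $L_h$ inside $Z_h$ or by writing $G_h=G_{h+1}-\Delta_h$ beyond the cut and using the flatness of $\Delta_h$ --- are routine.
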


In~\cite{maq}, a $q-$Gevrey version of Ramis-Sibuya is obtained. That version is related to $q-$Gevrey asymptotic expansions of some positive order $k$, which coincide with $(q,k);0)-$Gevrey asymptotic expansions in our framework. Ramis-Sibuya theorem in that framework reads as follows:

\begin{theo}[(q-RS)]\label{teoqrs} Let $(\mathbb{F},\left\|\cdot\right\|_{\mathbb{F}})$ be a Banach space and $(\mathcal{E}_{h})_{0\le h\le\iota-1}$ be a good covering in $\C^\star$. For every $0\le h\le \iota-1$, let $G_h(\epsilon)$ be a holomorphic function from $\mathcal{E}_h$ into $\mathbb{F}$ and let the cocycle $\Delta_h(\epsilon)=G_{h+1}(\epsilon)-G_h(\epsilon)$ be a holomorphic function from $Z_h=\mathcal{E}_{h+1}-\mathcal{E}_h$ into $\mathbb{F}$ (with the convention that $\mathcal{E}_{\iota}=\mathcal{E}_0$ and $G_{\iota}=G_0$). We make further assumptions:
\begin{enumerate}
\item The functions $G_h(\epsilon)$ are bounded as $\epsilon$ tends to 0 on $\mathcal{E}_h$, for all $0\le h\le \iota-1$.
\item The function $\Delta_h(\epsilon)$ is $q-$exponentially flat of order $k$ on $Z_h$ for all $0\le h\le \iota-1$, meaning that there exist two constants $C_h^1\in\R$ and $C^2_h>0$ with
$$\left\|\Delta_h(\epsilon)\right\|_{\mathbb{F}}\le C^2_h|\epsilon|^{C_h^1}\exp\left(-\frac{k}{2\log(q)}\log^2|\epsilon|\right),$$
for all $\epsilon\in Z_h$, and all $0\le h\le\iota-1$.
\end{enumerate}
Then, there exists a formal power series $\hat{G}(\epsilon)\in\mathbb{F}[[\epsilon]]$ which is the common $q-$Gevrey asymptotic expansion of order $1/k$ of the function $G_h(\epsilon)$ on $\mathcal{E}_h$, for all $0\le h\le \iota-1$.
\end{theo}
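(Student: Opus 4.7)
The plan is to mimic the Cauchy--Heine dissection used in the proof of the classical Ramis--Sibuya theorem, adapting the final estimates to the log-squared flatness at hand. The main steps are: (i) resolve the cocycle $(\Delta_{h})_{h}$ into a sum of holomorphic coboundaries $(\Psi_{h})_{h}$ satisfying $\Psi_{h+1}-\Psi_{h}=\Delta_{h}$ on $Z_{h}$; (ii) show that $a_{h}:=G_{h}-\Psi_{h}$ glue into a bounded holomorphic function on a punctured neighbourhood of $0$; (iii) quantify the asymptotic expansion of each $\Psi_{h}$ via a Laplace-type estimate in logarithmic coordinates.

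For step (i), fix for each $0\le h\le \iota-1$ a rectifiable arc $\gamma_{h}\subset Z_{h}$ issuing from the origin to a fixed interior point of $Z_{h}$, and set
$$\Phi_{h}(\epsilon):=\frac{1}{2i\pi}\int_{\gamma_{h}}\frac{\Delta_{h}(\zeta)}{\zeta-\epsilon}\,d\zeta,\qquad \Psi_{h}(\epsilon):=\sum_{j=0}^{\iota-1}\Phi_{j}(\epsilon),$$
where, when the summation is evaluated at $\epsilon\in\mathcal{E}_{h}$, the paths $\gamma_{h-1}$ and $\gamma_{h}$ are chosen on a fixed side of $\epsilon$. Standard contour deformation yields $\Psi_{h+1}-\Psi_{h}=\Delta_{h}$ on $Z_{h}$. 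Consequently, the family $(a_{h})=(G_{h}-\Psi_{h})$ glues on overlaps into a single holomorphic function $a$ defined on a punctured neighbourhood of $0$. The hypothesis that each $G_{h}$ is bounded near $0$, together with an a priori bound for each $\Psi_{h}$ coming from the $q$-exponential flatness of the $\Delta_{h}$, implies that $a$ is bounded near the origin. Riemann's removable singularity theorem then produces a holomorphic extension of $a$ to $0$, whose convergent Taylor series $\hat{a}(\epsilon)=\sum_{n\ge 0}a_{n}\epsilon^{n}$ is automatically a $q$-Gevrey asymptotic expansion of order $1/k$ on every subsector.

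The core of the argument is step (iii). Expanding the Cauchy kernel as
$$\frac{1}{\zeta-\epsilon}=\sum_{n=0}^{N}\frac{\epsilon^{n}}{\zeta^{n+1}}+\frac{\epsilon^{N+1}}{\zeta^{N+1}(\zeta-\epsilon)},$$
and inserting this into $\Phi_{h}(\epsilon)$, the remainder on a proper subsector of $\mathcal{E}_{h}$ is controlled by a constant times
$$|\epsilon|^{N+1}\int_{0}^{r_{0}}r^{C_{h}^{1}-N-2}\exp\!\left(-\frac{k}{2\log(q)}\log^{2} r\right)dr.$$
The substitution $u=\log r$ turns this into a Gaussian integral in $u$; completing the square in the exponent produces the bound $\tilde{C}\exp\!\bigl((C_{h}^{1}-N-1)^{2}\log(q)/(2k)\bigr)$, which in turn is at most $CA^{N+1}q^{N(N+1)/(2k)}$ after absorbing linear-in-$N$ and constant contributions into $A$. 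This is exactly the estimate required for $\Psi_{h}$ to admit a formal series $\hat{\Psi}_{h}\in\mathbb{F}[[\epsilon]]$ as $q$-Gevrey asymptotic expansion of order $1/k$ on $\mathcal{E}_{h}$.

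Finally, the same Laplace-in-log-coordinates estimate applied directly to $\Delta_{h}$ shows that $\Delta_{h}$ itself admits the zero formal series as $q$-Gevrey asymptotic expansion of order $1/k$ on $Z_{h}$. Since $\Psi_{h+1}-\Psi_{h}=\Delta_{h}$, this forces $\hat{\Psi}_{h+1}=\hat{\Psi}_{h}$ for every $h$; let $\hat{\Psi}$ denote the common formal series. Setting $\hat{G}:=\hat{a}+\hat{\Psi}\in\mathbb{F}[[\epsilon]]$, the decomposition $G_{h}=a+\Psi_{h}$ shows that $\hat{G}$ is the common $q$-Gevrey asymptotic expansion of order $1/k$ of every $G_{h}$ on $\mathcal{E}_{h}$. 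The sole delicate point is the Laplace-type estimate, as it is precisely what converts the $\log^{2}|\epsilon|$ decay of the cocycle into the desired $q^{N(N+1)/(2k)}$ growth of the Taylor remainder.
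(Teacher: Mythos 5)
Your argument is correct, and it follows the standard Cauchy--Heine route (cocycle splitting $\Psi_{h+1}-\Psi_h=\Delta_h$, gluing $G_h-\Psi_h$ to a function holomorphic at $0$ via removable singularity, and converting the $\exp\bigl(-\tfrac{k}{2\log q}\log^2|\epsilon|\bigr)$ flatness into $q^{N(N+1)/(2k)}$ remainder bounds by a Gaussian/completing-the-square estimate in $\log$ coordinates), which is exactly the method of the proof the paper invokes by citation to the reference \cite{maq} rather than reproducing it. The only points you gloss over are routine: the angular separation of the arcs $\gamma_j$ from a proper subsector of $\mathcal{E}_h$ (guaranteed by the good-covering property that only consecutive sectors overlap) needed to bound $|\zeta-\epsilon|$ from below, and the boundedness of each $\Psi_h$ near $0$ used before applying Riemann's theorem.
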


\subsection{Asymptotic expansions for the analytic solutions of the main problem}

In this section, we preserve all the assumptions made on the elements involved in the main problem (\ref{e1}), detailed in Section~\ref{sec2}. Moreover, we depart from the geometric construction of the elements used to construct the analytic solutions of (\ref{e1}), described in Sections~\ref{sec4} and~\ref{sec5}, and under Assumption (\ref{e435}).

The next result shows that the difference of two consecutive solutions of the main problem allow the application of the $((q,k);s)-$version of Ramis-Sibuya Theorem, obtained in Theorem~\ref{teo553}, in adequate domains.

\begin{lemma}\label{lema713}
Let $k'_1,k''_1,k_2,\lambda_1,\lambda_2,\mu_2$ be positive constants with $\mu_2>\lambda_2$. For every $n\in\N$, we consider the function
$$\Psi(x)=x^n\exp\left(-\frac{k''_1\lambda_1^2}{2\log(q)}\log^2(x)+\frac{k'_1(\mu_2-\lambda_2)k_2}{\log(q)}\log(x)\log(\log(x))\right).$$
Then, it holds that
\begin{equation}\label{e715}
\Psi(x)\le \hat{C}\hat{A}^nn!^{S}q^{\frac{n(n+1)}{K}},\quad x>0,
\end{equation}
for $n\ge n_0$, where $n_0\ge1$ is a large enough integer depending on $k'_1,k''_1,k_2,\lambda_1,\lambda_2,\mu_2$ with 
\begin{equation}\label{e721}
S=\frac{k'_1(\mu_2-\lambda_2)k_2}{k''_1\lambda_1^2},\quad K=\frac{9k''_1\lambda_1^2}{7}.
\end{equation}
\end{lemma}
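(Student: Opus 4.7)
I would reduce the estimate to a one-variable concave maximization. For $x>1$ (the case $0<x\le 1$ is harmless: then $x^n\le 1$, $\log\log x$ is not real so the factor must be understood with the middle exponential absent, and any residual contribution is absorbed into $\hat{C}$), set $u=\log x$ and
\[
a:=\frac{k''_1\lambda_1^{2}}{2\log q},\qquad b:=\frac{k'_1(\mu_2-\lambda_2)k_2}{\log q},\qquad S=\frac{b}{2a}=\frac{k'_1(\mu_2-\lambda_2)k_2}{k''_1\lambda_1^{2}}.
\]
Then $\log\Psi(x)=G(u):=nu-au^{2}+bu\log u$, and the goal is $G(u)\le\log\bigl(\hat{C}\hat{A}^{n}n!^{S}q^{n(n+1)/K}\bigr)$.

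Since $G''(u)=-2a+b/u<0$ for $u>b/(2a)$, the function $G$ is strictly concave on $[b/(2a),\infty)$ and $G'(u)\to -\infty$ as $u\to+\infty$. Hence for every $n$ beyond some threshold $n_0$ the critical equation
\[
G'(u^{*}_{n})=n-2au^{*}_{n}+b(\log u^{*}_{n}+1)=0
\]
has a unique solution $u^{*}_{n}>b/(2a)$, which is the global maximum of $G$ on this half-line; the restriction of $G$ to $(0,b/(2a)]$ is dominated by this maximum for $n\ge n_0$.

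A bootstrap starting from the crude estimate $u^{*}_{n}\sim n/(2a)$, substituted back into the critical equation, gives the refined expansion $u^{*}_{n}=n/(2a)+S\log n+O(1)$. Using the critical equation one obtains the clean identity $G(u^{*}_{n})=a(u^{*}_{n})^{2}-bu^{*}_{n}$, and expanding yields
\[
G(u^{*}_{n})=\frac{n^{2}}{4a}+S(n\log n-n)+O(n)=\frac{n^{2}\log q}{2k''_1\lambda_1^{2}}+S(n\log n-n)+O(n).
\]
Stirling's inequality $n\log n-n\le\log n!$ absorbs the middle term into $S\log n!$. For the leading quadratic term, the specific choice $K=9k''_1\lambda_1^{2}/7<2k''_1\lambda_1^{2}$ yields a positive slack
\[
\frac{1}{K}-\frac{1}{2k''_1\lambda_1^{2}}=\frac{7}{9k''_1\lambda_1^{2}}-\frac{1}{2k''_1\lambda_1^{2}}=\frac{5}{18\,k''_1\lambda_1^{2}}>0,
\]
so
\[
\frac{n(n+1)\log q}{K}-\frac{n^{2}\log q}{2k''_1\lambda_1^{2}}=\frac{5\,n^{2}\log q}{18\,k''_1\lambda_1^{2}}+\frac{n\log q}{K},
\]
a quantity of order $n^{2}$ that dominates the residual $O(n)$ error for $n\ge n_0$. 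Choosing $\hat{A},\hat{C}$ large enough then yields \eqref{e715}.

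The main obstacle is the rigorous implicit-function bootstrap for $u^{*}_{n}$, together with careful bookkeeping of the $O(1)$ and $O(n)$ errors when substituting the expansion back into $a(u^{*}_{n})^{2}-bu^{*}_{n}$. The argument hinges on two structural observations: the identity $G(u^{*}_{n})=a(u^{*}_{n})^{2}-bu^{*}_{n}$ (which exchanges the transcendental term $bu\log u$ for the elementary $-bu^{*}_{n}$) and the strict inequality $K<2k''_1\lambda_1^{2}$, which guarantees that the prescribed $q$-Gevrey exponent in \eqref{e715} leaves enough quadratic room to swallow all lower-order errors for $n\ge n_0$.
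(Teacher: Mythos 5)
Your proposal is correct, and it follows the same global strategy as the paper (pass to $u=\log x$, maximize the resulting quadratic-plus-$u\log u$ exponent over $u$, and compare the value at the maximizer with $n!^{S}q^{n(n+1)/K}$), but the key technical step is handled by a genuinely different route. The paper solves the critical equation exactly: it writes the maximizer $\tau_n$ in closed form via the $-1$-branch of the Lambert $W$ function and then imports the explicit two-sided bounds $-1-\sqrt{2u}-u<W_{-1}(-e^{-u-1})<-1-\sqrt{2u}-\tfrac{2}{3}u$ from Chatzigeorgiou; plugging these into $\overline{\Psi}(\tau_n)$ produces the exponent $\tfrac{7}{9k''_1\lambda_1^2}$ (hence $K=9k''_1\lambda_1^2/7$) and the factorial power $S$. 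You instead avoid Lambert $W$ altogether: a bootstrap on the critical equation gives $u^{*}_{n}=n/(2a)+S\log n+O(1)$, and the identity $G(u^{*}_{n})=a(u^{*}_{n})^{2}-bu^{*}_{n}$ (which you verify correctly and which eliminates the transcendental $bu\log u$ term) yields $G(u^{*}_{n})=\tfrac{n^{2}\log q}{2k''_1\lambda_1^{2}}+S(n\log n-n)+O(n)$, after which Stirling and the slack $\tfrac{1}{K}-\tfrac{1}{2k''_1\lambda_1^{2}}=\tfrac{5}{18k''_1\lambda_1^{2}}>0$ absorb all errors. What your approach buys is self-containedness and transparency: it shows that any $K<2k''_1\lambda_1^{2}$ would do, so the specific constant $7/9$ in the paper is an artifact of the quoted Lambert-$W$ bounds rather than essential; what the paper's approach buys is an exact, non-asymptotic localization of the maximizer (useful since the lemma is stated for all $x>0$ and the constants are tracked explicitly), at the cost of invoking an external inequality. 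The remaining work in your plan (making the bootstrap two-sided and bookkeeping the $O(1)$ and $O(n)$ terms) is routine, so I see no gap.
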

\begin{proof}
We make the change of variable $\tau=\log(x)$ and consider the auxiliary function $\overline{\Psi}(\tau):=\Psi(e^\tau)$. We search for the maximum of $\overline{\Psi}$ for $\tau\in\R$. It holds that
$$\overline{\Psi}'(\tau)=\Psi(e^\tau)\left(n-\frac{k''_1\lambda_1^2}{\log(q)}\tau+\frac{k'_1(\mu_2-\lambda_2)k_2}{\log(q)}(\log(\tau)+1)\right).$$
It is known that the solution of the equation $\log(t)+\log(b)t=\log(a)$, for some fixed $a,b>0$, is given by $t=W(a\log(b))/\log(b)$, where $W$ is the Lambert $W$ function. 
The maximum of $\overline{\Psi}(\tau)$ is then attained at
$$\tau=\tau_n= \frac{-k'_1(\mu_2-\lambda_2)k_2}{k''_1\lambda_1^2}W_{-1}\left(-\exp\left(-\frac{n\log(q)+k'_1(\mu_2-\lambda_2)k_2}{k'_1(\mu_2-\lambda_2)k_2}\right)\frac{k''_1\lambda_1^2}{k'_1(\mu_2-\lambda_2)k_2}\right),$$
where $W_{-1}$ stands for the $-1$-branch of Lambert $W$ function. Let 
\begin{equation}\label{e724}
A:=k'_1(\mu_2-\lambda_2)k_2,\quad C:=\frac{k''_1\lambda_1^2}{k'_1(\mu_2-\lambda_2)k_2}.
\end{equation}
Then $\tau_n$ can be written in the form $\tau_n=\frac{-1}{C}W_{-1}(-\exp(-\frac{n\log(q)}{A}-1)C)$.
Regarding Theorem 1~\cite{chatzigeorgiou}, we have
$$-1-\sqrt{2u}-u<W_{-1}(-e^{-u-1})<-1-\sqrt{2u}-\frac{2}{3}u,\quad u>0,$$
which can be applied to estimate $\tau_n$:
$$\frac{1+\sqrt{2u_0}+2/3u_0}{C}<-\frac{1}{C}W_{-1}(-e^{-u_0-1})<\frac{1+\sqrt{2u_0}+u_0}{C},\quad u_0=\frac{n\log(q)}{A}-\log(C),$$
leading to 
\begin{multline}
\frac{1}{C}\left(1+\sqrt{2(\frac{n\log(q)}{A}-\log(C))}+\frac{2}{3}(\frac{n\log(q)}{A}-\log(C))\right)<\tau_n\\
\tau_n<\frac{1}{C}\left(1+\sqrt{2\left(\frac{n\log(q)}{A}-\log(C)\right)}+\frac{n\log(q)}{A}-\log(C)\right).\label{e734}
\end{multline}
We conclude that $\overline{\Psi}(\tau)\le \overline{\Psi}(\tau_n)$. Taking into account (\ref{e734}) we arrive at 

\begin{multline}
\overline{\Psi}(\tau_n)=\exp\left(n\tau_n-\frac{k''_1\lambda_1^2}{2\log(q)}\tau_n^2+\frac{k'_1(\mu_2-\lambda_2)k_2}{\log(q)}\tau_n\log(\tau_n)\right)\\
< \hat{C}_1\hat{A}_1^n\exp\left(n^2\log(q)\left(\frac{1}{AC}-\frac{2k''_1\lambda_1^2}{9C^2A^2}\right)\xi+n\log(\frac{n\log(q)}{CA})\frac{k'_1(\mu_2-\lambda_2)k_2}{CA}\right),
\end{multline}

for every $0<\xi<1$, and some $\hat{C}_1,\hat{A}_1>0$. 
The definition of $A,C$ in (\ref{e724}) yields
$$\overline{\Psi}(\tau_n)\le \hat{C}_2\hat{A}_2^nn!^{\frac{k'_1(\mu_2-\lambda_2)k_2}{k''_1\lambda_1^2}}q^{n^2\frac{7}{9k''_1\lambda_1^2}}.$$
Statement (\ref{e715}) follows directly from the last inequality.
\end{proof}


\begin{theo}\label{teopral1}
Let the assumptions of Theorem~\ref{teo2a} hold. Let $\mathbb{F}_1$ be the Banach space of holomorphic and bounded functions on $\mathcal{T}_1\times\chi_2^{\infty}\times H_{\beta'}$. For all $0\le h_1\le \iota_1-1$, we consider the function 
\begin{equation}\label{e764}
\epsilon\mapsto u_{h_1}(t_1,\frac{x_2}{\epsilon^{\mu_2}}e^{\sqrt{-1}\theta_{h_1}},z,\epsilon),\quad \epsilon\in \mathcal{E}_{h_1}^{\infty}
\end{equation}
which defines a holomorphic and bounded function on $\mathcal{E}^{\infty}_{h_1}$, with values in $\mathbb{F}_1$. Then, there exist a formal series $\hat{u}^{\infty}(\epsilon)\in\mathbb{F}_1[[\epsilon]]$ such that for all $0\le h_1\le\iota_1-1$, the function (\ref{e764}) admits $\hat{u}^{\infty}(\epsilon)$ as its $((q,K);S)-$Gevrey asymptotic expansion on $\mathcal{E}^{\infty}_{h_1}$, for $K$ and $S$ stated in (\ref{e721}).
\end{theo}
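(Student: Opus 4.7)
The plan is to apply the $((q,K);S)$-version of the Ramis--Sibuya theorem (Theorem~\ref{teo553}) to the cochain $G_{h_1}(\epsilon):=u_{h_1}(t_1,\tfrac{x_2}{\epsilon^{\mu_2}}e^{\sqrt{-1}\theta_{h_1}},z,\epsilon)$ regarded as an $\mathbb{F}_1$-valued map on $\mathcal{E}^{\infty}_{h_1}$. Boundedness and holomorphy of each $G_{h_1}$ near $0$ follow from Theorem~\ref{teo1}, since by Definition~\ref{defi7a} the point $\tfrac{x_2}{\epsilon^{\mu_2}}e^{\sqrt{-1}\theta_{h_1}}$ belongs to $\mathcal{T}_{2,\epsilon,\mu_2}\subset\mathcal{T}_{2}$ for $x_2\in\chi_{2}^{\infty}$ and $\epsilon\in\mathcal{E}^{\infty}_{h_1}$. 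Thus the first hypothesis of Theorem~\ref{teo553} holds, and it remains to bound the cocycle $\Delta_{h_1}(\epsilon):=G_{h_1+1}(\epsilon)-G_{h_1}(\epsilon)$ on $Z_{h_1}=\mathcal{E}^{\infty}_{h_1}\cap\mathcal{E}^{\infty}_{h_1+1}$ in the form~(\ref{e553}).

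For $\epsilon\in Z_{h_1}$, Theorem~\ref{teo2a} applied in $\mathbb{F}_1$ supplies
$$\|\Delta_{h_1}(\epsilon)\|_{\mathbb{F}_1}\le C_{inn}|\epsilon|^{\Delta_1}\exp\!\left(-\tfrac{k''_1}{2\log(q)}\log^2\!\left(\tfrac{\hat{C}_1}{|\epsilon|^{\lambda_1}}\right)+\tfrac{k'_1}{\log(q)}\log\!\left(\tfrac{\hat{C}_2}{|\epsilon|^{(\mu_2-\lambda_2)k_2}}\right)\log\!\left(\log\!\left(\tfrac{\hat{C}_3}{|\epsilon|^{(\mu_2-\lambda_2)k_2}}\right)\right)\right).$$
If the prescribed directions $\theta_{h_1}$ and $\theta_{h_1+1}$ happen to differ on $Z_{h_1}$, I would first split $\Delta_{h_1}$ as the telescopic sum $(u_{h_1+1}(\theta_{h_1+1})-u_{h_1+1}(\theta_{h_1}))+(u_{h_1+1}(\theta_{h_1})-u_{h_1}(\theta_{h_1}))$; the first term is handled by the analyticity of $u_{h_1+1}$ in $t_2$ together with the exponential decay of the kernel $e^{-(1/t_2)^{k_2}u}$, while the second is covered directly by Theorem~\ref{teo2a}.

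The decisive step is to rewrite this estimate in the form
$$\|\Delta_{h_1}(\epsilon)\|_{\mathbb{F}_1}\le \tilde{C}_{1}\tilde{C}_{2}^{N}N!^{S}q^{\frac{N(N+1)}{2K}}|\epsilon|^{N+1},\qquad N\ge 0,$$
with $K$ and $S$ as in (\ref{e721}). Dividing by $|\epsilon|^{N+1}$ and performing the change of variable $x=1/|\epsilon|$, the leading part of the exponential becomes $\exp\!\left(-\tfrac{k''_1\lambda_1^2}{2\log(q)}\log^2 x+\tfrac{k'_1(\mu_2-\lambda_2)k_2}{\log(q)}\log(x)\log(\log(x))\right)$, while $|\epsilon|^{\Delta_1-N-1}=x^{N+1-\Delta_1}$; the subleading logarithmic corrections arising from $\hat{C}_1,\hat{C}_2,\hat{C}_3$ and from the shift $\Delta_1$ are absorbed into a geometric factor $\tilde{C}_{2}^{N}$. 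The resulting inequality is exactly the content of Lemma~\ref{lema713}, which delivers the desired bound for $N\ge n_0$; the finitely many remaining indices are absorbed into $\tilde{C}_{1}$ using the uniform boundedness of $\Delta_{h_1}$ in a neighbourhood of the origin.

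The chief obstacle is this matching calculation: tracking all constants through the substitution $x=1/|\epsilon|$ and verifying that $\log^{2}(x)$ and $\log(x)\log(\log(x))$ appear with the exact coefficients $\lambda_1^2$ and $(\mu_2-\lambda_2)k_2$ demanded by Lemma~\ref{lema713}. Assumption~(\ref{e435}), $k_1\lambda_1^2>k'_1((\mu_2-\lambda_2)k_2)^2$, guarantees that the $\log^2 x$ term dominates and that the constants $K,S$ prescribed by (\ref{e721}) are positive. With (\ref{e553}) thus established, Theorem~\ref{teo553} yields the formal series $\hat{u}^{\infty}(\epsilon)\in\mathbb{F}_{1}[[\epsilon]]$ which is the common $((q,K);S)$-Gevrey asymptotic expansion of every $G_{h_1}$ on $\mathcal{E}^{\infty}_{h_1}$.
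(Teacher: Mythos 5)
Your proposal is correct and follows essentially the same route as the paper: verify the hypotheses of the $((q,K);S)$-Ramis--Sibuya theorem (Theorem~\ref{teo553}) for the cochain $G_{h_1}$, bound the cocycle via the estimate (\ref{e449}) of Theorem~\ref{teo2a}, and convert it into the required form (\ref{e553}) through the substitution $x=1/|\epsilon|$ and Lemma~\ref{lema713}, with constants and the finitely many small indices absorbed as you indicate. Your extra remark on reconciling the directions $\theta_{h_1}$ and $\theta_{h_1+1}$ on the overlap is a careful addition the paper leaves implicit, but it does not change the argument.
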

\begin{proof}
Let $0\le h_1\le \iota_1-1$. Regarding (\ref{e449}) in Theorem~\ref{teo2a}, Lemma~\ref{lema713} and from usual estimates, we guarantee that for every $\epsilon\in \mathcal{E}^{\infty}_{h_1}\cap\mathcal{E}^{\infty}_{h_1+1}$, 
$$\sup_{t_1\in\mathcal{T}_1,x_2\in \chi_2^{\infty},z\in H_{\beta'}}|u_{h_1+1}(\bt,z,\epsilon)-u_{h_1}(\bt,z,\epsilon)|\le \Psi\left(\frac{1}{|\epsilon|}\right)|\epsilon|^n,\quad n\ge 0.$$
Taking into account Lemma~\ref{lema713}, we get that
$$\left\|u_{h_1+1}(\bt,z,\epsilon)-u_{h_1}(\bt,z,\epsilon)\right\|_{\mathbb{F}_1}\le \hat{C}\hat{A}^nn!^Sq^{\frac{n(n+1)}{K}}|\epsilon|^{n},\quad n\ge0,\epsilon\in\mathcal{E}^{\infty}_{h_1}\cap\mathcal{E}^{\infty}_{h_1+1}.$$
Theorem~\ref{teo553} states the existence of a formal power series $\hat{u}^{\infty}(\epsilon)\in\mathbb{F}_1[[\epsilon]]$ which is the common $((q,K);S)$-Gevrey asymptotic expansion of the function (\ref{e764}), as a function on $\mathcal{E}^{\infty}_{h_1}$ with values in $\mathbb{F}_1$, for every $0\le h_1\le \iota_1-1$.
\end{proof}

\begin{theo}\label{teopral2}
Let the assumptions of Theorem~\ref{teo2b} hold. Let $\mathbb{F}_2$ be the Banach space of holomorphic and bounded functions on $\mathcal{T}_1\times (\mathcal{T}_2\cap D(0,\rho_2))\times H_{\beta'}$. For every $0\le h_2\le \iota_2-1$, we consider the function
\begin{equation}\label{e805}
\epsilon\mapsto u_{h_2}(\bt,z,\epsilon),\quad \epsilon\in\mathcal{E}_{h_2}^0,
\end{equation}
which is an outer solution of (\ref{e1}), holomorphic and bounded on $\mathcal{E}_{h_2}^{0}$, with values in $\mathbb{F}_2$. Then, there exists a formal power series $\hat{u}^0(\epsilon)\in\mathbb{F}_2[[\epsilon]]$ such that the function (\ref{e805}) admits $\hat{u}^0(\epsilon)$ as its $q-$Gevrey asymptotic expansion of order $1/(k_1\lambda_1^2)$ on $\mathcal{E}_{h_2}^{0}$, for all $0\le h_2\le \iota_2-1$.
\end{theo}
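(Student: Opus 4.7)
The plan is to apply directly the $q$-Ramis-Sibuya theorem (Theorem~\ref{teoqrs}) in the Banach space $\mathbb{F}_2$, with parameter $k := k_1\lambda_1^2$. I set $G_{h_2}(\epsilon) := u_{h_2}(\bt,z,\epsilon)$, viewed as an $\mathbb{F}_2$-valued holomorphic function on $\mathcal{E}_{h_2}^{0}$, for $0\le h_2\le \iota_2-1$. The first hypothesis of Theorem~\ref{teoqrs}, boundedness of $G_{h_2}$ near the origin, is immediate from Theorem~\ref{teo1}, which constructs $u_{h_2}$ as a bounded holomorphic function on $\mathcal{T}_1\times(\mathcal{T}_2\cap D(0,\rho_2))\times H_{\beta'}\times\mathcal{E}_{h_2}^{0}$.

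The heart of the matter is the second hypothesis, namely $q$-exponential flatness of order $k_1\lambda_1^2$ for the cocycle $\Delta_{h_2}(\epsilon):=G_{h_2+1}(\epsilon)-G_{h_2}(\epsilon)$ on $Z_{h_2}=\mathcal{E}_{h_2}^{0}\cap\mathcal{E}_{h_2+1}^{0}$. The input for this step is the bound (\ref{e449b}) of Theorem~\ref{teo2b}, and the work is purely algebraic: I expand
\begin{equation*}
\log^2\!\left(\frac{\hat{C}_4}{|\epsilon|^{\lambda_1}}\right)=\lambda_1^{2}\log^{2}|\epsilon|-2\lambda_1\log(\hat{C}_4)\log|\epsilon|+\log^{2}(\hat{C}_4),
\end{equation*}
so that the factor $\exp\!\bigl(-\tfrac{k_1}{2\log q}\log^{2}(\hat{C}_4/|\epsilon|^{\lambda_1})\bigr)$ splits into the key kernel $\exp\!\bigl(-\tfrac{k_1\lambda_1^{2}}{2\log q}\log^{2}|\epsilon|\bigr)$, a power of $|\epsilon|$, and an absolute constant. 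The second exponential $\exp(-\hat{C}_5/|\epsilon|^{\lambda_2 k_2})$ is bounded above by $1$ since $\hat{C}_5>0$ and $\epsilon$ stays in a bounded sector. Absorbing these contributions into two constants $C^{1}_{h_2}\in\mathbb{R}$ and $C^{2}_{h_2}>0$, I obtain exactly the bound
\begin{equation*}
\|\Delta_{h_2}(\epsilon)\|_{\mathbb{F}_2}\le C^{2}_{h_2}\,|\epsilon|^{C^{1}_{h_2}}\exp\!\left(-\frac{k_1\lambda_1^{2}}{2\log q}\log^{2}|\epsilon|\right),\qquad \epsilon\in Z_{h_2},
\end{equation*}
which is the flatness condition requested by Theorem~\ref{teoqrs} for the order $k=k_1\lambda_1^{2}$.

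Once both hypotheses are verified, Theorem~\ref{teoqrs} produces a formal power series $\hat{u}^{0}(\epsilon)\in\mathbb{F}_2[[\epsilon]]$ which is the common $q$-Gevrey asymptotic expansion of order $1/(k_1\lambda_1^{2})$ of $G_{h_2}(\epsilon)$ on $\mathcal{E}_{h_2}^{0}$, uniformly in $0\le h_2\le \iota_2-1$. In contrast with the inner case (Theorem~\ref{teopral1}), no auxiliary Lambert $W$ analysis is needed here: the cocycle estimate (\ref{e449b}) is already of pure $q$-Gevrey type, with no Gevrey sub-level interfering. The only foreseeable obstacle is notational, namely making sure that the power $|\epsilon|^{\Delta_2}$ and the cross term in the expansion of the squared logarithm combine into a real exponent $C^{1}_{h_2}$ (not necessarily nonnegative), which is harmless because Theorem~\ref{teoqrs} explicitly allows $C^{1}_{h_2}\in\mathbb{R}$.
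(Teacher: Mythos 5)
Your proposal is correct and follows exactly the paper's own route: the paper's proof also consists of applying the $q-$Ramis-Sibuya Theorem~\ref{teoqrs} in $\mathbb{F}_2$ with the cocycle bound (\ref{e449b}), the boundedness coming from Theorem~\ref{teo1}. Your explicit expansion of $\log^2(\hat{C}_4/|\epsilon|^{\lambda_1})$ to extract the kernel $\exp(-\tfrac{k_1\lambda_1^2}{2\log q}\log^2|\epsilon|)$ merely spells out the algebra the paper leaves implicit.
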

\begin{proof}
Taking into account (\ref{e449b}) in Theorem~\ref{teo2b}, and Theorem~\ref{teoqrs}, we get the existence of a formal power series $\hat{u}^0(\epsilon)\in\mathbb{F}_2[[\epsilon]]$ which is the common $q$-Gevrey asymptotic expansion of the function (\ref{e805}), as a function on $\mathcal{E}^{0}_{h_2}$ with values in $\mathbb{F}_2$. This holds for all $0\le h_2\le \iota_2-1$.

\end{proof}

\section{Proof of Proposition~\ref{lema361}}\label{secanexo2}

In this section, we give proof of the technical Proposition~\ref{lema361}. We consider $U_\gamma(\bT,m,\epsilon)$ is constructed in the form (\ref{e242}).

For the first part of the proof, we take into account the property (\ref{e190}) on Jacobi Theta function, and Proposition~\ref{prop5}, to arrive at
\begin{align*}
|U_\gamma(\bT,m,\epsilon)|&\le \frac{1}{\pi_{q^{1/k_1}}}\int_0^{\infty}|\omega(re^{\gamma\sqrt{-1}})|\frac{1}{\left|\Theta_{q^{1/k_1}}\left(\frac{re^{\gamma\sqrt{-1}}}{T_1}\right)\right|}e^{-\frac{r}{|T_2|^{k_2}}\cos(\gamma-k_2\hbox{arg}(T_2))}\frac{dr}{r}\\
&\le \frac{\varpi}{\pi_{q^{1/k_1}}C_{q,k_1}\tilde{\delta}}(1+|m|)^{-\mu}e^{-\beta|m|}I(|T_1|,|T_2|),
\end{align*}
where 
\begin{multline}
I(|T_1|,|T_2|)=\int_0^\infty \exp\left(\frac{k'_1}{2\log(q)}\log^2(r+\delta)+\alpha\log(r+\delta)\right)\exp\left(-\frac{k_1}{2\log(q)}\log^2\left(\frac{r}{|T_1|}\right)\right)\\
\times\left(\frac{|T_1|}{r}\right)^{1/2}\exp\left(-\frac{r}{|T_2|^{k_2}}\delta_3\right)\frac{dr}{r}.
\end{multline}
We split $I(|T_1|,|T_2|)$ into the sum of $I_1(|T_1|,|T_2|)$ and $I_2(|T_1|,|T_2|)$, where the first element is associated to the integration in $(0,\rho)$ and the second is concern with the integration restricted to $(\rho,\infty)$, for some $\rho>0$. We study each part of the splitting:

We have 
$$I_1(|T_1|,|T_2|)\le C_{3}|T_1|^{1/2}\int_0^\rho\exp\left(-\frac{k_1}{2\log(q)}\log^2\left(\frac{r}{|T_1|}\right)\right)\frac{dr}{r^{3/2}},$$
for some $C_3>0$, which after the change of variable $r=|T_1|r'$ equals
$$C_3\int_0^{\rho/|T_1|}\exp\left(-\frac{k_1}{2\log(q)}\log^2(r')\right)\frac{dr'}{(r')^{3/2}},$$
which is bounded for every $|T_1|\in\R_+$.

On the other hand, we assume $\rho,\rho_1>0$ are such that $\rho\ge \rho_1$. Then, the positive function $\phi:[\rho,\infty)\to\R$ defined by $\phi(r)=\log^2(r/|T_1|)$ is monotone increasing on  $[\rho,\infty)$ for any choice of $|T_1|<\rho_1$. Therefore, 
\begin{equation}\label{e390}
I_{2}(|T_1|,|T_2|)\le \frac{1}{\rho^{3/2}}|T_1|^{1/2}\exp\left(-\frac{k_1}{2\log(q)}\log^2\left(\frac{\rho}{|T_1|}\right)\right)I_{2.1}(|T_2|^{k_2}/\delta_{3}),
\end{equation}
where
$$I_{2.1}(|T_2|^{k_2}/\delta_{3})=\int_0^{\infty} \exp\left(\frac{k'_1}{2\log(q)}\log^2(r+\delta)+\alpha\log(r+\delta)\right)\exp\left(-\frac{r\delta_3}{|T_2|^{k_2}}\right)dr.$$
Let $x=|T_2|^{k_2}/\delta_3$. We make the change of variable $r=x\tilde{r}$ in the last integral and arrive at
$$I_{2.1}(x)=\int_0^\infty \exp\left(\frac{k'_1}{2\log(q)}\log^2(x\tilde{r}+\delta)+\alpha\log(x\tilde{r}+\delta)\right)\exp\left(-\tilde{r}\right)xd\tilde{r}.$$
Taking into account that
$$\log^2(x\tilde{r}+\delta)=\log^2(x)+2\log(x)\log(\tilde{r}+\frac{\delta}{x})+\log^2(\tilde{r}+\frac{\delta}{x}),$$
$$\log(x\tilde{r}+\delta)=\log(x)+\log(\tilde{r}+\frac{\delta}{x}),$$ 
we get that $I_{2.1}(x)=I_{2.2}(x)+I_{2.3}(x)$, where the splitting is done on the integral by cutting the integration path into $(0,1-\delta/x)$ and $(1-\delta/x,\infty)$ for $I_{2.2}(x)$ and $I_{2.3}(x)$, respectively.
\begin{align}
I_{2.2}(x)&\le \exp\left(\frac{k'_1}{2\log(q)}\log^2(x)+\alpha\log(x)\right)x\int_0^{1-\delta/x}\exp\left(\frac{k'_1}{2\log(q)}\log^2(\tilde{r}+\frac{\delta}{x})\right)\exp\left(-\tilde{r}\right)d\tilde{r}\nonumber\\
&\le C_4\exp\left(\frac{k'_1}{2\log(q)}\log^2(x)+\alpha\log(x)\right)x\label{e391}
\end{align}
for some $C_4>0$ which does not depend on $x$. Concerning $I_{2.3}(x)$, we proceed analogously to arrive at 
\begin{align*}
I_{2.3}&\le e^{\frac{k'_1}{2\log(q)}\log^2(x)+\alpha\log(x)}x\int_{1-\frac{\delta}{x}}^{\infty} \left(\tilde{r}+\frac{\delta}{x}\right)^{\frac{k'_1\log(x)}{\log(q)}+\alpha}\exp\left(\frac{k'_1}{2\log(q)}\log^2(\tilde{r}+\frac{\delta}{x})\right)e^{-\tilde{r}}d\tilde{r}\\
&\le C_5 e^{\frac{k'_1}{2\log(q)}\log^2(x)+\alpha\log(x)}x\int_{1-\frac{\delta}{x}}^{\infty} \left(\tilde{r}+\frac{\delta}{x}\right)^{\frac{k'_1\log(x)}{\log(q)}}e^{-\frac{\tilde{r}}{2}}d\tilde{r},\\
&\le C_5 e^{\frac{\delta}{2x}}e^{\frac{k'_1}{2\log(q)}\log^2(x)+\alpha\log(x)}x2^{\frac{k'_1\log(x)}{\log(q)}+1}\int_{1/2}^{\infty}h^{\frac{k'_1\log(x)}{\log(q)}}e^{-h}dh\\
&\le C_5 e^{\frac{\delta}{2x}}e^{\frac{k'_1}{2\log(q)}\log^2(x)+\alpha\log(x)}x2^{\frac{k'_1\log(x)}{\log(q)}+1}\Gamma\left(\frac{k'_1\log(x)}{\log(q)}+1\right)
\end{align*}
for some $C_5>0$. In the last sequence of inequalities, we have made the change of variable $\tilde{r}+\frac{\delta}{x}=2h$. The application of Stirling formula $\Gamma(x+1)\sim_{x\to\infty}\sqrt{2\pi}x^{1/2}x^xe^{-x}$ leads us to
\begin{equation}\label{e392}
I_{2.3}\le  C_6 (\log(x))^{1/2}\left(\frac{k'_1\log(x)}{\log(q)}\right)^{\frac{k'_1\log(x)}{\log(q)}}e^{\frac{k'_1}{2\log(q)}\log^2(x)+\alpha\log(x)}x,
\end{equation}
for some $C_6>0$. From (\ref{e390}), (\ref{e391}) and (\ref{e392}) we conclude 
\begin{multline}\label{e419}
I_2(|T_1|,|T_2|)\le C_7|T_1|^{1/2}\exp\left(-\frac{k_1}{2\log(q)}\log^2\left(\frac{\rho}{|T_1|}\right)\right)\left[\frac{|T_2|^{k_2}}{\delta_3}e^{\frac{k'_1}{2\log(q)}\log^2(\frac{|T_2|^{k_2}}{\delta_3})+\alpha\log(\frac{|T_2|^{k_2}}{\delta_3})}\right.\\
\left.+\frac{|T_2|^{k_2}}{\delta_3}e^{\frac{k'_1}{2\log(q)}\log^2\left(\frac{|T_2|^{k_2}}{\delta_3}\right)+\alpha\log\left(\frac{|T_2|^{k_2}}{\delta_3}\right)}\left(\log(\frac{|T_2|^{k_2}}{\delta_3})\right)^{1/2}e^{\frac{k'_1}{\log(q)}\log\left(\frac{|T_2|^{k_2}}{\delta_3}\right)\log\left(\frac{k'_1}{\log(q)}\log\left(\frac{|T_2|^{k_2}}{\delta_3}\right)\right)}\right]
\end{multline}
for some $C_7>0$. The first statement of Proposition~\ref{lema361} holds. We give proof for the second statement. 

The first arguments in the proof of the first statement can be followed word by word up to the splitting of $I(|T_1|,|T_2|)$ into $I_1(|T_1|,|T_2|)+I_2(|T_1|,|T_2|)$. The quantity $I_1(|T_1|,|T_2|)$ is upper bounded by a constant for every $|T_1|>0$ and $|T_2|>0$. We now proceed to give upper estimates on $I_2(|T_1|,|T_2|)$. Let us choose $\rho_1,\rho>0$ such that $\phi(r)$ is monotone increasing on $[\rho,\infty)$. It holds that
\begin{equation}\label{e390b}
I_{2}(|T_1|,|T_2|)\le \frac{1}{\rho^{3/2}}|T_1|^{1/2}\exp\left(-\frac{k_1}{2\log(q)}\log^2\left(\frac{\rho}{|T_1|}\right)\right)I_{2.1}(|T_2|),
\end{equation}
where
\begin{align}
I_{2.1}(|T_2|)&=\int_\rho^{\infty} \exp\left(\frac{k'_1}{2\log(q)}\log^2(r+\delta)+\alpha\log(r+\delta)\right)\exp\left(-\frac{r\delta_3}{2|T_2|^{k_2}}\right)\exp\left(-\frac{r\delta_3}{2|T_2|^{k_2}}\right)dr.\nonumber\\
&\le C_8\exp\left(-\frac{\rho\delta_3}{2|T_2|^{k_2}}\right),\label{e391b}
\end{align}
for some $C_8>0$. The conclusion follows from this last upper bound.

\section{Connection of the solutions of (\ref{e2}) and (\ref{e3})}\label{secanexo}

Let $\epsilon\in D(0,\epsilon_0)$. We consider $\omega(\tau,m,\epsilon)\in q\hbox{Exp}^d_{(k'_1,\beta,\mu,\alpha)}$. Let $\gamma\in\R$ be an argument of $S_d$.

We depart from 
$$U_\gamma(\bT,z,\epsilon)=\frac{1}{\pi_{q^{1/k_1}}}\int_{L_\gamma}\omega(u,m,\epsilon)\frac{1}{\Theta_{q^{1/k_1}}\left(\frac{u}{T_1}\right)}e^{-\left(\frac{1}{T_2}\right)^{k_2}u}\frac{du}{u}.$$
Let $\tilde{\mathcal{T}}_1$ be a bounded sector and $\tilde{\mathcal{T}}_2$ be an unbounded sector, both with vertex at the origin, satisfying the assumptions in Section~\ref{sec5}. $U_\gamma(\bT,m,\epsilon)$ turns out to be a continuous function on $\tilde{\mathcal{T}}_1\times \tilde{\mathcal{T}}_2\times\R$, holomorphic with respect to $\bT$ on $\tilde{\mathcal{T}}_1\times \tilde{\mathcal{T}}_2$.

\begin{prop}
In the previous framework, the following identities hold for all $T_1\in\tilde{\mathcal{T}}_1,T_2\in\tilde{\mathcal{T}}_2,m\in\R$:
\begin{itemize}
\item[$(i)$] For every integer $\delta\ge 0$ one has
$$(T_2^{k_2+1}\partial_{T_2})^{\delta}U_{\gamma}(\bT,m,\epsilon)=\frac{1}{\pi_{q^{1/k_1}}}\int_{L_\gamma}(k_2u)^\delta\omega(u,m,\epsilon)\frac{1}{\Theta_{q^{1/k_1}}\left(\frac{u}{T_1}\right)}e^{-\left(\frac{1}{T_2}\right)^{k_2}u}\frac{du}{u}.$$
\item[$(ii)$] For every $1\le \ell_1\le D_1-1$ one has
\begin{multline}
T_1^{d_{\ell_1}}\sigma_{q;T_1}^{\delta_{\ell_1}}U_{\gamma}(\bT,m,\epsilon)\\
=\frac{1}{\pi_{q^{1/k_1}}}\int_{L_\gamma}\left[\frac{u^{d_{\ell_1}}}{(q^{1/k_1})^{d_{\ell_1}(d_{\ell_1}-1)/2}}\omega(q^{\delta_{\ell_1}-\frac{d_{\ell_1}}{k_1}}u,m,\epsilon)e^{-\left(\frac{1}{T_2}\right)^{k_2}q^{\delta_{\ell_1}-\frac{d_{\ell_1}}{k_1}}u}\right]\frac{1}{\Theta_{q^{1/k_1}}\left(\frac{u}{T_1}\right)}\frac{du}{u},
\end{multline}
\item[$(iii)$] It holds that
\begin{multline}
T_1^{d_{D_1}}\sigma_{q;T_1}^{\frac{d_{D_1}}{k_1}}U_{\gamma}(\bT,m,\epsilon)\\
=\frac{1}{\pi_{q^{1/k_1}}}\int_{L_\gamma}\left[\frac{u^{d_{D_1}}}{(q^{1/k_1})^{d_{D_1}(d_{D_1}-1)/2}}\omega(u,m,\epsilon)e^{-\left(\frac{1}{T_2}\right)^{k_2}u}\right]\frac{1}{\Theta_{q^{1/k_1}}\left(\frac{u}{T_1}\right)}\frac{du}{u},
\end{multline}
\item[$(iv)$] For every $1\le \ell_1\le D_1-1$ and $1\le \ell_2\le D_2-1$ we have
\begin{multline}T_1^{d_{\ell_1}}\sigma_{q;T_1}^{\delta_{\ell_1}}\sigma_{q;T_2}^{\frac{1}{k_2}\left(\delta_{\ell_1}-\frac{d_{\ell_1}}{k_1}\right)}U_{\gamma}(\bT,m,\epsilon)\\
=\frac{1}{\pi_{q^{1/k_1}}}\int_{L_\gamma}\left[\frac{u^{d_{\ell_1}}}{(q^{1/k_1})^{d_{\ell_1}(d_{\ell_1}-1)/2}}\omega(q^{\delta_{\ell_1}-\frac{d_{\ell_1}}{k_1}}u,m,\epsilon)e^{-\left(\frac{1}{T_2}\right)^{k_2}u}\right]\frac{1}{\Theta_{q^{1/k_1}}\left(\frac{u}{T_1}\right)}\frac{du}{u}.
\end{multline}
\end{itemize}
\end{prop}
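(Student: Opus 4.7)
The plan is to separate (i) from (ii)--(iv): (i) requires differentiating under the integral sign in $T_{2}$, while the remaining three identities all reduce to a single mechanism, namely a real positive rescaling along the ray $L_{\gamma}$ combined with the functional equation
\[
\Theta_{q^{1/k_{1}}}\!\bigl(q^{m/k_{1}}x\bigr)=q^{m(m+1)/(2k_{1})}x^{m}\,\Theta_{q^{1/k_{1}}}(x),\quad m\in\Z.
\]
Legitimacy of both operations is granted by the convergence estimates recalled in Section~\ref{sec32a}: the factor $e^{-u/T_{2}^{k_{2}}}$ decays like $\exp(-|u|\cos(\gamma-k_{2}\arg T_{2})/|T_{2}|^{k_{2}})$ because of the constraint on $\gamma$ encoded in $\tilde{\mathcal{T}}_{2}$, which absorbs both any polynomial or $q$-power factor in $u$ arising from differentiation and the $q$-exponential growth of $\omega$, the latter tamed by the lower bound (\ref{e190}) on $\Theta_{q^{1/k_{1}}}^{-1}$.

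For (i), I would first record the elementary identity $T_{2}^{k_{2}+1}\partial_{T_{2}}\bigl(e^{-u/T_{2}^{k_{2}}}\bigr)=k_{2}u\,e^{-u/T_{2}^{k_{2}}}$, commute $\partial_{T_{2}}$ with the integral by dominated convergence as justified above, and iterate $\delta$ times to bring down the factor $(k_{2}u)^{\delta}$ inside the integrand, as announced.

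The main computation is (ii). I would first apply the dilation $\sigma_{q;T_{1}}^{\delta_{\ell_{1}}}$, which replaces $T_{1}$ by $q^{\delta_{\ell_{1}}}T_{1}$ inside the Theta factor, and then perform the change of variable $u=q^{\delta_{\ell_{1}}-d_{\ell_{1}}/k_{1}}v$. Since the substitution is a positive real scalar, the ray $L_{\gamma}$ is preserved and $du/u=dv/v$. After the substitution, $u/(q^{\delta_{\ell_{1}}}T_{1})=q^{-d_{\ell_{1}}/k_{1}}\,v/T_{1}$, so the functional equation with $m=-d_{\ell_{1}}$ yields
\[
\frac{1}{\Theta_{q^{1/k_{1}}}\!\bigl(u/(q^{\delta_{\ell_{1}}}T_{1})\bigr)}=(q^{1/k_{1}})^{-d_{\ell_{1}}(d_{\ell_{1}}-1)/2}\,(v/T_{1})^{d_{\ell_{1}}}\,\frac{1}{\Theta_{q^{1/k_{1}}}(v/T_{1})}.
\]
Multiplication by the outer factor $T_{1}^{d_{\ell_{1}}}$ cancels the $T_{1}^{-d_{\ell_{1}}}$ above and produces the prefactor $v^{d_{\ell_{1}}}/(q^{1/k_{1}})^{d_{\ell_{1}}(d_{\ell_{1}}-1)/2}$; meanwhile the $\omega$-factor becomes $\omega(q^{\delta_{\ell_{1}}-d_{\ell_{1}}/k_{1}}v,m,\epsilon)$ and the exponential acquires the corresponding $q$-power in its argument, matching (ii) exactly.

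Identity (iii) is the specialisation of (ii) in which $\ell_{1}$ is replaced by $D_{1}$ and $\delta_{\ell_{1}}$ is set to $d_{D_{1}}/k_{1}$: the exponent $\delta_{\ell_{1}}-d_{\ell_{1}}/k_{1}$ then vanishes and both the $q$-factor inside $\omega$ and the $q$-factor in the exponential reduce to $1$. For (iv), the extra operator $\sigma_{q;T_{2}}^{(\delta_{\ell_{1}}-d_{\ell_{1}}/k_{1})/k_{2}}$ transforms $e^{-u/T_{2}^{k_{2}}}$ into $e^{-q^{-(\delta_{\ell_{1}}-d_{\ell_{1}}/k_{1})}u/T_{2}^{k_{2}}}$; running the very same change of variable $u=q^{\delta_{\ell_{1}}-d_{\ell_{1}}/k_{1}}v$ as in (ii) then cancels this $q$-power and leaves $e^{-v/T_{2}^{k_{2}}}$, while the Theta term and the prefactor $T_{1}^{d_{\ell_{1}}}$ are treated exactly as in (ii). The only delicate point throughout is the bookkeeping of the exponent $d_{\ell_{1}}(d_{\ell_{1}}-1)/(2k_{1})$ produced by the Theta functional equation, which must align precisely with the factor in the denominator of the announced integrand; once (ii) is correctly established, the remaining identities follow by straightforward modifications.
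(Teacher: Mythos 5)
Your proposal is correct and follows essentially the same route as the paper: statement $(i)$ by differentiation under the integral sign, $(iii)$ as the case where the dilation exponent $\delta-\frac{d}{k_1}$ vanishes, and $(iv)$ from $(ii)$ together with the cancellation of the $q$-power produced by $\sigma_{q;T_2}^{\frac{1}{k_2}(\delta_{\ell_1}-d_{\ell_1}/k_1)}$ in the exponential. The only difference is that where the paper simply invokes Proposition~\ref{prop6} (Proposition 6 of~\cite{maq}) with $f(x)=\omega(x,m,\epsilon)e^{-(1/T_2)^{k_2}x}$, you re-derive that commutation identity directly via the rescaling $u=q^{\delta_{\ell_1}-d_{\ell_1}/k_1}v$ and the functional equation of $\Theta_{q^{1/k_1}}$, which is exactly the mechanism underlying the cited result.
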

\begin{proof}
Statement $(i)$ is a consequence of the derivation under the integral sign. Statement $(ii)$ is a direct consequence of Proposition 6 in~\cite{maq} with $k:=k_1$, $\sigma:=d_{\ell_1}$, $j:=\delta_{\ell_1}$ and $f(x)=\omega(x,m,\epsilon)\exp(-\left(\frac{1}{T_2}\right)^{k_2}x)$. The third statement is a consequence of Proposition 6 in~\cite{maq}. The last statement is derived from the application of $(ii)$ and the fact that 
$$\exp\left(-\left(T_2q^{\frac{1}{k_2}(\delta_{\ell_1}-\frac{d_{\ell_1}}{k_1})}\right)^{-k_2}q^{\delta_{\ell_1}-\frac{d_{\ell_1}}{k_1}}u\right)=\exp\left(-\left(T_2\right)^{-k_2}u\right).$$
\end{proof}

\end{document}